\title{Characterization of probability distribution convergence in Wasserstein distance by $L^{p}$-quantization error function}
\newcommand{\footremember}[2]{
   \footnote{#2}
    \newcounter{#1}
    \setcounter{#1}{\value{footnote}}
}
\author{
     Yating LIU\footremember{t}{Sorbonne Universit\'e, Laboratoire de Probabilit\'es, Statistique et Mod\'elisation, UMR 8001, case 188, 4, pl. Jussieu, F-75252 Paris Cedex 5, France. E-mail: \texttt{yating.liu@sorbonne-universite.fr}}
     \and Gilles PAG\`ES\footremember{a}{Sorbonne Universit\'e, Laboratoire de Probabilit\'es, Statistique et Mod\'elisation, UMR 8001, case 188, 4, pl. Jussieu, F-75252 Paris Cedex 5, France. E-mail: \texttt{gilles.pages@sorbonne-universite.fr}}
}
\begin{document}
\maketitle

\renewcommand{\thefootnote}{(\arabic{footnote})}

\theoremstyle{plain}
\newtheorem{lem}{Lemma}[section]
\newtheorem{prop}{Proposition}[section]
\newtheorem{cor}{Corollary}[section]

\theoremstyle{definition}
\newtheorem{defn}{Definition}[section]
\newtheorem{thm}{Theorem}[section]

\theoremstyle{remark}
\newtheorem*{rem}{Remark}

\begin{abstract}
We establish  conditions to  characterize probability measures by their $L^{p}$-quantization error functions in both $\mathbb{R}^{d}$ and Hilbert settings. This characterization is two-fold:  static (identity of two distributions) and dynamic (convergence  for the $L^p$-Wasserstein distance). We first propose a criterion on the qantization level $N$, valid for any norm on $\mathbb{R}^{d}$  and  any order $p$ based on  a geometrical approach involving the Vorono\"i diagram. Then, we prove that in the $L^2$-case  on a (separable) Hilbert space, the condition on the level $N$  can be reduced to $N=2$, which is optimal. More quantization based characterization cases on dimension 1 and a discussion of the completeness of a distance defined by the quantization error function can be found in the end of this paper.
\end{abstract}

\emph{Keywords:}  Probability distribution characterization; Vector quantization; Vorono\"i diagram; Wasserstein convergence.

\section{Introduction}

Vector quantization was originally developed as an optimal discretization method for signal transmission and compression by the Bell laboratories in the 1950s. Many seminal and historical contributions on vector quantization and its connections with information theory were gathered and published later in~\cite{IEEE1982}. In the unsupervised learning area, vector quantization has a close connection with the automatic classification (clustering)  through the $k$-means algorithm. 
More recently, in the 1990s, it  became an efficient tool in numerical probability to compute regular and  conditional expectations (see~\cite{pages1998space},~\cite{bally2000quantization} and~\cite{pages2003optimal}) with in view the pricing of derivative products.
Thus, a quantization based numerical schemes have been developed for  American option pricing (see~\cite{bally2002quantization}), and for the simulation of Backward Stochastic Differential Equation or  nonlinear filtering (see~\cite{pages2018improved}). For a first  mathematically  rigorous  monograph of various aspects of vector quantization theory, we refer to \cite{graf2000foundations} (and the references therein). For more  engineering applications  to signal compression see e.g.~\cite{gersho2012vector} among an extensive literature.

In all these applications, either of probabilistic or statistical nature, vector quantization is used to produce a kind of skeleton of a probability distribution.
To be more precise, let $(\Omega, \mathcal{A}, \mathbb{P})$ denote a probability space and let $X$ be a random variable defined on $(\Omega, \mathcal{A}, \mathbb{P})$ and valued in $(E, |\cdot|_{E})$, where $E$ is $\mathbb{R}^{d}$ or a separable Hilbert space $H$ and $|\cdot|_{E}$ denotes respectively the norm on $\mathbb{R}^{d}$ or the norm on $H$ induced by the inner product $(\cdot\,|\,\cdot)_{H}$. Let $\mu$ denote the probability distribution of $X$, denoted by $\mathbb{P}_{X}=\mu$ or $\text{Law}(X)=\mu$ and assume that $\mu$ has a finite $p$-th moment. 
The quantization grid (also called \textit{codebook} in signal compression or \textit{cluster center} in machine learning theory) is a finite set of points in $E$, denoted by $\Gamma=\{x_{1}, ..., x_{N}\}\subset E$. Let us define the distance between a point $\xi$ and a set $A$ in $E$ by $d(\xi, A)=\min_{a\in A}\left|\xi-a\right|_{E}$. The $L^{p}$-mean quantization error of $\Gamma$, defined by $e_{p}(\mu, \Gamma)\coloneqq\left\Vert d(X, \Gamma)\right\Vert_{p}=\Big[\int_{E}\min_{a\in\Gamma}\left|\xi-a\right|_{E}^{p}\mu(d\xi)\Big]^{\frac{1}{p}}$, is used to describe the accuracy level of representing the probability measure $\mu$ by $\Gamma$. Let $N\ge 1$. A  quantization grid $\Gamma^{*, (N)}$ satisfying 
\begin{equation}\label{errorgrid}
e_{p}(\mu, \Gamma^{*, (N)})=\inf_{\substack{\Gamma\subset E,\\\text{card}(\Gamma)\leq N}}\Big[\mathbb{E}\;d(X,\Gamma)^{p}\Big]^{\frac{1}{p}}=\inf_{\substack{\Gamma\subset E,\\\text{card}(\Gamma)\leq N}}\Big[\int_{E}\min_{a\in\Gamma}\left|\xi-a\right|_{E}^{p}\mu(d\xi)\Big]^{\frac{1}{p}}
\end{equation}
is called an \textit{$L^{p}$-optimal quantization grid} (or \emph{optimal grid} in short) at {\em level} $N$. We refer to~\cite{graf2000foundations}[Theorem 4.12] for the existence of such an optimal grids on $\mathbb{R}^{d}$ and to~\cite{luschgy2002functional}[Proposition 2.1] or~\cite{cuesta1988strong} on  (separable) Hilbert spaces. 
There is usually no closed form for optimal grids, however, in the quadratic case ($p=2$), it can be computed by the stochastic optimization methods such as the CLVQ algorithm or the randomized Lloyd algorithm (see~\cite{pages2015introduction}[Section 3],~\cite{kieffer1982exponential} and~\cite{pages2016pointwise}).

Optimal grids $\Gamma^{*, (N)}$ ``carries'' the 
information of the initial measure. For example, let $\mu\!\in\mathcal{P}_{p+\varepsilon}(\mathbb{R}^{d})$ for some $\varepsilon>0$, where $\mathcal{P}_{p}(E)\coloneqq\{\mu \text{ probability distribution on } E$ $ \,\text{s.t.}\, \int_{E}\left|\xi\right|_{E}^{p}\mu(d\xi)<+\infty\}$.  Let $\mu=h\cdot\lambda_{d}$ be an absolutely continuous distribution ($\lambda_d$  denotes  Lebesgue measure). 
%where $\mu_{c}=h\cdot\lambda_{d}$ denotes the part and $\mu_{s}$ the singular part of $\mu$. %Consequently, there exists a density function $h$ such that $\mu_{c}(d\xi)=h(\xi)\lambda_{d}(d\xi)$. 
%For any $\mu_{n}$, $\mu$ probability distributions on a polish space $S$, we denote by $\mu_{n}\xRightarrow{\;(S)\;}\mu$ the weak convergence of $\mu_{n}$ to $\mu$. 
If for every level $N\geq1$, $\Gamma^{*, (N)}$ is an optimal quantization grid of $\mu$ at level $N$, then 
\begin{equation}\label{example}
\frac{1}{N}\sum_{x\in\Gamma^{*, (N)}}\delta_{x}\xRightarrow{\;(\mathbb{R}^{d})\;}\widetilde{\mu}=\frac{h^{d/(d+p)}(\xi)}{\int h^{d/(d+p)}d\lambda_{d}}\lambda_{d}(d\xi), \;\;\text{as}\;\; N\rightarrow+\infty,
\end{equation}
where, for a Polish space $S$, $\xRightarrow{(S)}$ denotes the weak convergence of probability measures on $S$. We refer to~\cite{graf2000foundations}[Theorem 7.5] for a proof of this result. %, that is, for every bounded, continuous real-valued function $f$ on $S$, we have $\int_{S}fd\mu_{n}\rightarrow\int_{S}fd\mu$. 
This weak convergence~(\ref{example}) emphasizes that,
% if $\mu$ is absolutely continuous with respect to $\lambda_{d}$ (denoted by $\mu\ll\lambda_{d}$), $\widetilde{\mu}$, hence 
an absolutely continuous probability measure $\mu$ is entirely characterized by the sequence of $L^{p}$-optimal quantization grids $\Gamma^{*,(N)}$ at levels $N$, $N\geq1$.

We consider now the $L^{p}$-mean quantization error function as follows.

\begin{defn}[Quantization error function]\label{defquanerrf}
Let $\mu\!\in\mathcal{P}_{p}(\mathbb{R}^{d})$, $p\in[1, +\infty)$. The %\textbf{
$L^{p}$-mean quantization error function
%} 
of $\mu$ at level $N$, denoted by $e_{N,p}(\mu, \cdot)$, is defined by:
\begin{equation}\label{lpquantierror}
\begin{array}{cccc}
e_{N,p}(\mu,\cdot):&(\mathbb{R}^{d})^{N}&\longrightarrow & \mathbb{R}_{+}\\
&x=(x_{1}, \dots, x_{N}) & \longmapsto& \displaystyle e_{N,p}(\mu, x)=\Big[\int_{\mathbb{R}^{d}}\min_{1\leq i\leq N}\left|\xi-x_{i}\right|^{p}\mu(d\xi)\Big]^{\frac{1}{p}}.
\end{array}
\end{equation}
\end{defn}

\vspace{-0.2cm}
\noindent The definition of $e_{N,p}(\mu,\cdot)$
obviously depends on the associated norm on $\mathbb{R}^{d}$ and the variable of $e_{N,p}(\mu, \cdot)$ is a priori an $N$-tuple in $(\mathbb{R}^{d})^{N}$. However, for a finite grid $\Gamma\subset\mathbb{R}^{d}$, if the level $N\geq \text{card}(\Gamma)$, then for any $N$-tuple $x^{\Gamma}=(x_{1}^{\Gamma}, \dots, x_{N}^{\Gamma})\in(\mathbb{R}^{d})^{N}$ such that $\Gamma=\{x_{1}^{\Gamma}, \dots, x_{N}^{\Gamma}\}$,  we have $e_{p}(\mu, \Gamma)=e_{N,p}(\mu, x^{\Gamma})$.
For example, $e_p\big(\mu, \{x_1,x_2\}\big) = e_{2,p}\big(\mu, (x_1,x_2)\big) = e_{3,p}\big(\mu, (x_1,x_1,x_2)\big)$, etc.
Note that $e_{N, p}$ is a symmetric function 
on $(\mathbb{R}^{d})^{N}$ and that, owing to the above definition, 
\begin{equation}\label{argmin}
\inf_{\Gamma\subset\mathbb{R}^{d}, \text{card}(\Gamma)\leq N}e_{p}(\mu, \Gamma)=\inf_{x\in(\mathbb{R}^{d})^{N}}e_{N, p}(\mu, x).
\end{equation}
Therefore, throughout  this paper, with a slight abuse of notation,  we will also denote the $L^{p}$-quantization error at level $N$ for a grid $\Gamma$ of size at most $N$ by $e_{N,p}(\mu, \Gamma)$. 

The equality (\ref{argmin}) directly shows that the optimal grids are characterized by the $L^{p}$-mean quantization error functions.  Next, we show that the quantization error function $e_{N, p}(\mu, \cdot)$ is entirely characterized by the probability distribution $\mu$. 

Notice that for any $\mu\!\in\mathcal{P}_{p}(\mathbb{R}^{d})$, the function $e_{N,p}(\mu, \cdot)$ defined in (\ref{lpquantierror}) is 1-Lipschitz continuous for every $N\geq1$ since for any $x=(x_{1}, \dots, x_{N}),y=(y_{1}, \dots, y_{N})\in(\mathbb{R}^{d})^{N}$,
\begin{align}\label{1lip}
&\left|e_{N,p}(\mu, x)-e_{N,p}(\mu, y)\right|=\left|\Big[\int_{\mathbb{R}^{d}}\min_{1\leq i\leq N}\left|\xi-x_{i}\right|^{p}\mu(d\xi)\Big]^{\frac{1}{p}}-\Big[\int_{\mathbb{R}^{d}}\min_{1\leq j\leq N}\left|\xi-y_{j}\right|^{p}\mu(d\xi)\Big]^{\frac{1}{p}}\right|\nonumber\\
&\hspace{1cm}\leq\Big[\int_{\mathbb{R}^{d}}\Big|\min_{1\leq i\leq N}\left|\xi-x_{i}\right|-\min_{1\leq j\leq N}\left|\xi-y_{j}\right|\Big|^{p}\mu(d\xi)\Big]^{\frac{1}{p}}\hspace{0.5cm}\text{\small(by the Minkowski inequality)}\nonumber\\
&\hspace{1cm}\leq\Big[\int_{\mathbb{R}^{d}}\max_{1\leq i\leq N}\left|x_{i}-y_{i}\right|^{p}\mu(d\xi)\Big]^{\frac{1}{p}}=\max_{1\leq i\leq N}\left|x_{i}-y_{i}\right|.
\end{align}

We recall now the definition of the $L^p$-Wasserstein distance.

\begin{defn}[$L^p$-Wasserstein distance]\label{defwas}
Let $(S,d)$ be a Polish space and $\mathcal{S}=\text{Bor}(S,d)$ be its Borel $\sigma$-field. For $p\in[1, +\infty)$, let $\mathcal{P}_{p}(S)$ denote the set of probability measures on $(S,\mathcal{S})$ with a finite $p^{th}$-moment. 
The $L^p$-Wasserstein  distance  $\mathcal{W}_{p}(\mu,\nu)$ between $\mu,\, \nu\!\in\mathcal{P}_{p}(S)$, denoted by $\mathcal{W}_{p}(\mu,\nu)$, is defined by
\begin{align}\label{defwas2}
\mathcal{W}_{p}(\mu,\nu)&=\Big{(}\inf_{\pi\!\in\Pi(\mu,\nu)}\int_{S\times S}d(x,y)^{p}\pi(dx,dy)\Big{)}^{\frac{1}{p}}\nonumber\\
&=\inf\Big{\{}\Big{[}\mathbb{E}\;d(X,Y)^{p}\Big{]}^{\frac{1}{p}},\;  X,Y:(\Omega,\mathcal{A},\mathbb{P})\rightarrow(\text{S},\mathcal{S})  \;\text{with} \;\mathbb{P}_{X}=\mu, \mathbb{P}_{Y}=\nu\;\Big{\}},
\end{align}
\noindent where in the first line of (\ref{defwas2}), $\Pi(\mu,\nu)$ denotes the set of all probability measures on $(S^{2}, \mathcal{S}^{\otimes2})$ with respective marginals $\mu$ and $\nu$.
\end{defn}

If we consider  $e_{N,p}(\mu, x)$ as a function of $\mu\!\in\mathcal{P}_{p}(\mathbb{R}^{d})$, then $e_{N,p}$ is also $1$-Lipschitz in $\mu$. In fact, let $X,Y$ be two random variables with probability distributions $\mathbb{P}_{X}=\mu$ and $\mathbb{P}_{Y}=\nu$. For every $N$-tuple $x=(x_{1},\dots,x_{N})\in(\mathbb{R}^{d})^{N}$, we have
\begin{align}\label{wast}
&\big|e_{N,p}(\mu, x)-e_{N,p}(\nu, x)\big|=\left| \;\;\Big\Vert\min_{i=1,\dots,N}\left|X-x_{i}\right|\Big\Vert_{p}-\Big\Vert\min_{i=1,\dots,N}\left|Y-x_{i}\right|\Big\Vert_{p}\;\; \right|\nonumber\\
&\hspace{1cm}\leq\Big\Vert\min_{i=1,\dots,N}\left|X-x_{i}\right|-\min_{i=1,\dots,N}\left|Y-x_{i}\right|\Big\Vert_{p} \text{\small(by the Minkowski inequality)}\nonumber\\
&\hspace{1cm}\leq\Big\Vert\max_{i=1,\dots,N}\left|\;\;\left|X-x_{i}\right|-\left|Y-x_{i}\right|\;\;\right|\Big\Vert_{p}\leq\left\Vert X-Y\right\Vert_{p}.
\end{align}

As this inequality holds for every couple $(X, Y)$ of random variables with marginal distributions $\mu$ and $\nu$, it follows that for  every level $N\geq1$,
\begin{equation}\label{control}
\left\Vert e_{N, p}(\mu, \cdot)-e_{N,p}(\nu, \cdot)\right\Vert_{\sup}\coloneqq\sup_{x\in(\mathbb{R}^{d})^{N}}\left|e_{N,p}(\mu, x)-e_{N,p}(\nu, x)\right|\leq\mathcal{W}_{p}(\mu,\nu).
\end{equation}
Hence, if $(\mu_{n})_{n\geq1}$ is a sequence in $\mathcal{P}_{p}(\mathbb{R}^{d})$ converging for the $\mathcal{W}_{p}$-distance to $\mu_{\infty}\!\in\mathcal{P}_{p}(\mathbb{R}^{d})$, then
\begin{equation}\label{06}
\left\Vert e_{N,p}(\mu_{n},\cdot)-e_{N,p}(\mu_{\infty},\cdot)\right\Vert_{\sup}\leq\mathcal{W}_{p}(\mu_{n},\mu_{\infty})\xrightarrow{n\rightarrow+\infty}0.
\end{equation}

Definition~\ref{defquanerrf}, and the inequalities (\ref{1lip}), (\ref{wast}), (\ref{control}), (\ref{06}) can be directly extended to any separable Hilbert space $H$. Inequalities (\ref{control}) and (\ref{06}) show that for every $N\ge 1$, and $p\!\in [1, +\infty)$, the quantization error function $e_{N, p}(\mu, \cdot)$ is characterized by the probability distribution $\mu$. Hence, the characterization relations between a probability measure $\mu$, its $L^{p}$-quantization error function and its optimal grids can be synthesized by the following scheme:

\tikzstyle{mybox} = [draw=black, fill=white, thick,
    rectangle, rounded corners,  inner sep=5pt, inner ysep=5pt]
\tikzstyle{fancytitle}=[draw=black, fill=white, rectangle, rounded corners, thick, text=black]
\tikzstyle{arrow} = [->,>=stealth]

\begin{figure}[H]
\centering

\begin{tikzpicture}
\node (probameasure) [mybox]
{Probability measure $\mu$
};

\node (quanerrorfunc) [mybox, below left = 2.2cm and -0.9cm of probameasure]
{\begin{tabular}{cc}
Quantization error \\
function $e_{N, p}(\mu, \cdot)$
\end{tabular}};

\node (optimalgird) [mybox, right =2cm of quanerrorfunc]
{\begin{tabular}{cc}
Optimal grid \\$\Gamma^{*, (N)}$
\end{tabular}};

\draw[->,>=stealth, xshift=4cm] (probameasure) to [bend right=20] node[midway, left] {See (\ref{control}) and (\ref{06})$\;$}(quanerrorfunc) ;
\draw[->,densely dashed, >=stealth, , xshift=-2cm] (quanerrorfunc) to [bend right=10] node [midway, right] {\large $\;$?}(probameasure)  ;
\draw[->,>=stealth, , xshift=-2cm] (quanerrorfunc) to [bend right =10] node [midway, below] {\begin{tabular}{cc}argmin\\ see (\ref{argmin})\end{tabular}} (optimalgird);
\draw[->,>=stealth, , xshift=-2cm] (optimalgird) to [bend right=20] node [midway, right] {\begin{tabular}{cc} If $\mu\!\in\mathcal{P}_{p+\varepsilon}(\mathbb{R}^{d})$, $\mu\ll\lambda_{d}$ \\(absolutely  continuous) and  \\$\;$ if we know the optimal grid \\$\;\;$ for every level $N$, see (\ref{example}).\end{tabular}}(probameasure);
\end{tikzpicture}
\end{figure}

\vskip-0.25cm The characterization of a probability measure $\mu$ by its $L^{p}$-optimal quantization grids suggests to consider the ``reverse'' questions of (\ref{control}) and (\ref{06}): \emph{When is a probability measure $\mu\!\in\mathcal{P}_{p}(\mathbb{R}^{d})$ characterized by its $L^{p}$-quantization error function $e_{N,p}(\mu, \cdot)$? And if so, does the convergence in an appropriate sense of the $L^{p}$-quantization error functions characterizes the convergence of their probability distributions for the $\mathcal{W}_{p}$-distance?}

These questions can be formalized as follows (the first one in a slightly extended sense):
\begin{framed}

\vspace{-0.5cm}
\begin{itemize}
\item \textbf{Question 1 - Static characterization}: \\
If for $\mu, \nu\!\!\in\mathcal{P}_{p}(\mathbb{R}^{d})$,  $e_{N, p}(\mu, \cdot)=e_{N, p}(\nu, \cdot)+ C$ for some real constant $C$, then do we have $\mu=\nu$ (and $C=0$)?
\item \textbf{Question 2 - Characterization of $\mathcal{W}_{p}$-convergence}: \\
If for  $\mu_{n}, n\geq1$, $\mu_{\infty}\!\!\in\mathcal{P}_{p}(\mathbb{R}^{d})$, $e_{N, p}(\mu_{n}, \cdot)$ converges pointwise to $e_{N, p}(\mu_{\infty}, \cdot)$, then do we have  $\mathcal{W}_{p}(\mu_{n},\mu_{\infty})\xrightarrow{n\rightarrow+\infty}0$?
\end{itemize}
\vspace{-0.2cm}
\end{framed}

For any $N_{1}, N_{2}\!\in\mathbb{N}^{*}$ with $N_{1}\leq N_{2}$, it is clear that
$e_{N_{2},p}(\mu, \cdot)=e_{N_{2},p}(\nu, \cdot)$ \big(resp. $e_{N_{2},p}(\mu_{n}, \cdot)\xrightarrow{n\rightarrow+\infty}e_{N_{2},p}(\mu_{\infty}, \cdot)$\big) implies 
$e_{N_{1},p}(\mu, \cdot)=e_{N_{1},p}(\nu, \cdot)$ \big(resp. $e_{N_{1},p}(\mu_{n}, \cdot)\xrightarrow{n\rightarrow+\infty}e_{N_{1},p}(\mu_{\infty}, \cdot)$\big). Hence, beyond these two above questions, we  need to determine an as low as possible level $N$ for which both answers are positive.  
For this purpose, we define
\begin{align}\label{defndq}
N_{d, p, \left|\cdot\right|}\coloneqq\min\{N\in&\,\mathbb{N}^{*} \text{ such that  answers to {\em Questions 1} and {\em 2} for $e_{N, p}$  are positive}\}.
\end{align}

The paper is organized as follows. We first recall in Section~\ref{wasserstein} some properties of the Wasserstein distance $\mathcal{W}_{p}$. Then in Section~\ref{genernalcase}, we begin to analyze the problem of probability distribution characterization in a general finite dimensional framework by considering any dimension $d$, any order $p$ and any norm on $\mathbb{R}^{d}$. We show that a positive answer to \textit{Question 1 and 2} follows from the existence of a bounded open Vorono\"i cell in a Vorono\"i diagram of size $N$, which in turn can be derived from a minimal covering of the unit sphere by unit closed balls centered on the sphere.  As a consequence, we define for $N\geq N_{d, p, \left|\cdot\right|}$ a quantization based distance $\mathcal{Q}_{N,p}\coloneqq\left\Vert e_{N,p}(\mu, \cdot)- e_{N,p}(\nu, \cdot)\right\Vert_{\sup}$ which we will prove to be topologically equivalent to the Wasserstein distance $\mathcal{W}_{p}$.

In Section~\ref{euclideancase}, we consider the quadratic case (\emph{i.e.} the order $p$=2) and extend the characterization result to probability distributions on a separable 
Hilbert space $H$ with the norm $\left|\cdot\right|_{H}$ induced by the inner product $(\cdot\mid\cdot)_{H}$. In this section, we will prove by a purely analytical method that $N_{H, 2, \left|\cdot\right|_{H}}=2$~\footnote{Since the dimension of the Hilbert space that we discuss in this section can be finite or infinite, we write directly $H$ instead of $d$ in the subscript of $N_{d, p, \left|\cdot\right|}$.} and the topological equivalence of Wasserstein distance $\mathcal{W}_{2}$ and the distance $\mathcal{Q}^H_{2, 2}(\mu, \nu)\coloneqq\left\Vert e_{2,2}(\mu, \cdot)- e_{2,2}(\nu, \cdot)\right\Vert_{\sup}$ on $\mathcal{P}_{2}(H)$.

Section~\ref{enr1} is devoted to the one-dimensional setting.  Quantization based characterization not yet covered by the discussion in Section~\ref{genernalcase} and Section~\ref{euclideancase} are established. Furthermore, we prove that $\mathcal{Q}_{1, 1}$  is a complete distance on $\mathcal{P}_{1}(\mathbb{R})$ and give a counterexample to show that the distances $\mathcal{Q}_{N, 2}$, $N\ge 2$ are   not complete on $\mathcal{P}_{2}(\mathbb{R})$ in Section~\ref{42}.

\subsection{Preliminaries on Wasserstein distance}\label{wasserstein}

Let $(S, d)$ be a general Polish metric space. The relation between weak convergence and convergence for the Wasserstein distance $\mathcal{W}_{p}$ (see Definition~\ref{defwas}) is recalled in Theorem~\ref{was1}. We recall below some useful facts about the $L^{p}$-Wasserstein distance that will be called upon further on. The first one is that, for every $p\in[1, +\infty)$, $\mathcal{W}_{p}$ is a distance on $\mathcal{P}_{p}(S)$ \big($\mathcal{W}_{p}^{p}$ if $p\in(0,1)$\big), see e.g.~\cite{villani2003topics}[Theorem 7.3] for the proof and~\cite{berti2015gluing} for a recent reference. Next, the metric space $\big(\mathcal{P}_{p}(S), \mathcal{W}_{p}\big)$ is separable and complete, see e.g.~\cite{bolley2008separability} for the proof.  More generally, we refer to ~\cite{villani2008optimal}[Chapter 6] for an in depth presentation of Wasserstein distance and its properties. 
\begin{thm}(see~\cite{villani2003topics}[Theorem 7.12])\label{was1}
Let $\mu_{n} \!\in \mathcal{P}_{p}(S)$ for every $n\!\in\mathbb{N}^{*}\cup\{\infty\}$. Let $p\in[1,+\infty)$. Then,\\
\noindent$(a)$ $\mathcal{W}_{p}(\mu_{n},\mu_{\infty})\rightarrow0\;\text{if and only if}\;\begin{cases}
(\alpha)\;\mu_{n}\xRightarrow{(S)}\mu_{\infty}\\
(\beta)\;\exists \,x_{0}\!\in S, {\int_{S}d(x_{0},\xi)^{p}\mu_{n}(d\xi)\rightarrow\int_{S}d(x_{0},\xi)^{p}\mu_{\infty}(d\xi)}\\
\end{cases}\hspace{-0.4cm}.$\\
\noindent$(b)$ If
\begin{equation}\label{unifinte}
\exists\,x_{0}\!\in S,\;\;\;\lim_{R\rightarrow+\infty}\sup_{n\geq1}\int_{d(x_{0}, \xi)^{p}\geq R}d(x_{0}, \xi)^{p}\mu_{n}(d\xi)=0,
\end{equation}
then $(\mu_{n})_{n\geq1}$ is relatively compact for the Wasserstein distance $\mathcal{W}_{p}$. 
\end{thm}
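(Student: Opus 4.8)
The plan is to prove parts (a) and (b) separately, the bridge between weak convergence and $\mathcal{W}_p$-convergence being Skorokhod's representation theorem combined with a uniform-integrability (Vitali) argument. For the forward implication in (a), assume $\mathcal{W}_p(\mu_n,\mu_\infty)\to 0$ and, for each $n$, choose a coupling $(Y_n,Y_\infty)$ on some probability space with $\mathrm{Law}(Y_n)=\mu_n$, $\mathrm{Law}(Y_\infty)=\mu_\infty$ and $\|d(Y_n,Y_\infty)\|_p\le \mathcal{W}_p(\mu_n,\mu_\infty)+1/n$. For any bounded $1$-Lipschitz $f\colon S\to\mathbb{R}$ one has $\big|\int f\,d\mu_n-\int f\,d\mu_\infty\big|\le \mathbb{E}\,d(Y_n,Y_\infty)\le \|d(Y_n,Y_\infty)\|_p\to 0$, and since bounded Lipschitz functions are convergence-determining this gives $(\alpha)$; moreover, for any $x_0\in S$, the reverse triangle inequality in $L^p$ yields $\big|\,\|d(x_0,Y_n)\|_p-\|d(x_0,Y_\infty)\|_p\,\big|\le \|d(x_0,Y_n)-d(x_0,Y_\infty)\|_p\le \|d(Y_n,Y_\infty)\|_p\to 0$, i.e.\ $(\beta)$ (for every $x_0$, in fact).

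For the converse implication in (a), assume $(\alpha)$ and $(\beta)$. As $S$ is Polish and $\mu_n$ converges weakly to $\mu_\infty$, Skorokhod's representation theorem provides random variables $Z_n$ ($n\in\mathbb{N}^*\cup\{\infty\}$) on a common probability space with $Z_n\to Z_\infty$ almost surely and $\mathrm{Law}(Z_n)=\mu_n$. Then $d(x_0,Z_n)^p\to d(x_0,Z_\infty)^p$ a.s.\ while $\mathbb{E}\,d(x_0,Z_n)^p\to\mathbb{E}\,d(x_0,Z_\infty)^p<\infty$ by $(\beta)$; the elementary fact that a.s.\ convergence of nonnegative integrands together with convergence of their means to a finite limit forces uniform integrability then shows $\{d(x_0,Z_n)^p\}_n$ is uniformly integrable. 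From $d(Z_n,Z_\infty)^p\le 2^{p-1}\big(d(x_0,Z_n)^p+d(x_0,Z_\infty)^p\big)$ the family $\{d(Z_n,Z_\infty)^p\}_n$ is uniformly integrable too, and it tends to $0$ a.s., so Vitali's theorem gives $\mathbb{E}\,d(Z_n,Z_\infty)^p\to 0$. Since $(Z_n,Z_\infty)$ is an admissible coupling, $\mathcal{W}_p(\mu_n,\mu_\infty)^p\le\mathbb{E}\,d(Z_n,Z_\infty)^p\to 0$.

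For (b), condition~(\ref{unifinte}) first yields a uniform moment bound $M:=\sup_n\int_S d(x_0,\xi)^p\mu_n(d\xi)<\infty$ (pick $R$ so the supremum over $n$ of the tail integrals is $\le 1$; then each integral is $\le R+1$), whence by Markov's inequality $\sup_n\mu_n\big(\{d(x_0,\cdot)>K\}\big)\le MK^{-p}\to 0$, so in $\mathbb{R}^d$, where bounded closed sets are compact, $(\mu_n)$ is tight. (In a general Polish space tightness has to be added as a hypothesis — e.g.\ $\mu_n=\delta_{e_n}$ on $\ell^2$ satisfies~(\ref{unifinte}) but is not relatively compact — yet it holds automatically in the finite-dimensional setting where this result is used.) Granting tightness, take an arbitrary subsequence; Prokhorov's theorem extracts a further subsequence $(\mu_{n_k})$ converging weakly to some $\mu$, which lies in $\mathcal{P}_p(S)$ by Fatou's lemma. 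The uniform integrability of $\{d(x_0,\cdot)^p\}$ along $(\mu_{n_k})$ upgrades this to $\int_S d(x_0,\xi)^p\mu_{n_k}(d\xi)\to\int_S d(x_0,\xi)^p\mu(d\xi)$ (truncate $d(x_0,\cdot)^p$ at level $R$, which is bounded continuous, and let $R\to\infty$), so part~(a) gives $\mathcal{W}_p(\mu_{n_k},\mu)\to 0$. As the original subsequence was arbitrary, $(\mu_n)$ is relatively compact for $\mathcal{W}_p$.

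The hard part is the converse implication in (a): promoting the qualitative pair ``$\mu_n$ converges weakly and one $p$-th moment converges'' to the quantitative estimate $\mathcal{W}_p(\mu_n,\mu_\infty)\to 0$. Everything hinges on two classical facts: Skorokhod's representation (which is why $S$ must be Polish), converting weak convergence into a.s.\ convergence on a common probability space, and the uniform-integrability characterization of $L^p$-convergence, i.e.\ Vitali's theorem together with the ``converse to Fatou'' observation that a.s.\ convergence plus convergence of means entails uniform integrability. A secondary subtlety is the tightness gap in (b) noted above, which is immaterial in the finite-dimensional framework in which Theorem~\ref{was1} will be invoked.
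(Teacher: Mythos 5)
Your argument is correct, and there is nothing internal to compare it against: the paper offers no proof of Theorem~\ref{was1}, which is quoted verbatim from Villani's monograph. Your route is the standard one — near-optimal couplings for the forward implication of $(a)$, Skorokhod representation plus the Scheff\'e/Vitali uniform-integrability mechanism for the converse, and Prokhorov extraction upgraded by part $(a)$ for $(b)$ — and each step checks out (in particular $\mathbb{E}\,d(Y_n,Y_\infty)\le \Vert d(Y_n,Y_\infty)\Vert_p$ by Jensen, and the domination $d(Z_n,Z_\infty)^p\le 2^{p-1}\big(d(x_0,Z_n)^p+d(x_0,Z_\infty)^p\big)$ correctly transfers uniform integrability). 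Your caveat on part $(b)$ is a genuine catch rather than a defect of your proof: as literally stated for an arbitrary Polish space $S$, condition (\ref{unifinte}) does not imply tightness, and $\mu_n=\delta_{e_n}$ on $\ell^2$ is a valid counterexample (the tails in (\ref{unifinte}) vanish for $R>1$ since $d(0,e_n)=1$, yet $\mathcal{W}_p(\delta_{e_n},\delta_{e_m})=\sqrt{2}$ for $n\neq m$, so no subsequence converges); the statement requires either tightness as an added hypothesis or a Heine--Borel space such as $\mathbb{R}^d$. This is immaterial for the rest of the paper, which establishes tightness separately in the $\mathbb{R}^d$ proofs and invokes only part $(a)$ in the Hilbert setting.
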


\section{General quantization based characterizations on $\mathbb{R}^{d}$}\label{genernalcase}

This section is devoted to establish a general criterion that positively answers to \textit{Questions 1 and 2} in any dimension $d$, for any order $p$ and any norm on $\mathbb{R}^{d}$. 
The idea is to design an approximate identity $(\varphi_{\varepsilon})_{\varepsilon>0}$\footnote{By approximate identity we mean $\varphi_{\varepsilon}\!\in L^{1}\big(\mathbb{R}^{d}, \mathcal{B}(\mathbb{R}^{d}), \lambda_{d}\big)$, $\varepsilon>0$, such that $\int_{\mathbb{R}^{d}}\varphi_{\varepsilon}d\lambda_{d}=1$, $\sup_{\varepsilon>0}\int_{\mathbb{R}^{d}}\left|\varphi_{\varepsilon}\right|d\lambda_{d}<+\infty$ and $\lim_{\varepsilon\rightarrow0}\int_{\{\left|\xi\right|>\eta\}}\varphi_{\varepsilon}(\xi)\lambda_{d}(\xi)=0$ for every $\eta>0$.} based on the quantization error function $e_{N,p}(\mu,\cdot)$. 
Our construction of $(\varphi_{\varepsilon})_{\varepsilon>0}$ relies on a purely geometrical idea: it is based on a specified Vorono\"i diagram containing a bounded open Vorono\"i cell that we introduce in Section~\ref{voronoi}. The static characterization is established in Theorem~\ref{charstat}. Furthermore, Theorem~\ref{charcvg} shows that a pointwise convergence of the quantization error functions is enough to imply the $\mathcal{W}_{p}$-convergence of a $\mathcal{P}_{p}(\mathbb{R}^{d})$-valued sequence. 

\subsection{A review of Vorono\"i diagram, existence of bounded cells}\label{voronoi}
Let $\Gamma=\{x_{1}, \dots,x_{N}\}$ be a grid of size $N$. The \textit{Vorono\"i cell} generated by $x_{i}\!\in \Gamma$ is defined by 
\begin{equation}\label{voronoidef}
V_{x_{i}}(\Gamma)=\big{\{}\xi\!\in\mathbb{R}^{d}:\left|\xi-x_{i}\right|=\min_{1\leq j\leq N}\left|\xi-x_{j}\right|\big{\}},
\end{equation}
and $\big(V_{x_{i}}(\Gamma)\big)_{1\leq i\leq N}$ is called the \textit{Vorono\"i diagram} of $\Gamma$, which is a finite covering of $\mathbb{R}^{d}$ (see~\cite{graf2000foundations}). A Borel measure partition $\big(C_{x_{i}}(\Gamma)\big)_{1\leq i\leq N}$ is called a \textit{Vorono\"i partition} of $\mathbb{R}^{d}$ induced by $\Gamma$ if for every $i\!\in\{1,\dots,N\},\;C_{x_{i}}(\Gamma)\subset V_{x_{i}}(\Gamma)$.
We also define the \textit{open Vorono\"i cell} generated by $x_{i}\!\in \Gamma$ by 
\begin{equation}\label{voroopendef}
V^{o}_{x_{i}}(\Gamma)=\big{\{}\xi\!\in\mathbb{R}^{d}:\left|\xi-x_{i}\right|<\min_{1\leq j\leq N, j\neq i}\left|\xi-x_{j}\right|\big{\}}.
\end{equation}

If the norm $|\cdot|$ on $\mathbb{R}^{d}$ is strictly convex, we have $\mathring{V}_{x_{i}}(\Gamma)= V^{o}_{x_{i}}(\Gamma)\;\;\text{and}\;\;\overline{V^{o}_{x_{i}}(\Gamma)}=V_{x_{i}}(\Gamma)$, 
where $\mathring{A}$ and $\overline{A}$ denote the interior and the closure of $A$. Examples of strictly convex norms are the isotropic $\ell_{r}$-norms for $1<r<+\infty$ defined by $\left|(a^1,\ldots, a^d)\right|_{r}=\big(\left|a^{1}\right|^{r}+\dots+\left|a^{d}\right|^{r}\big)^{1/r}$. However, this is not true for any norm on $\mathbb{R}^{d}$, typically not for the $\ell^{1}$-norm (see~\cite{graf2000foundations}[Figure 1.2]) or the $\ell^{\infty}-$norm. 

We recall that $A\subset\mathbb{R}^{d}$ is star-shaped with respect to $a\!\in A$ if for every $b\!\in A$ and any $\lambda\in[0,1]$, $a+\lambda(b-a)\!\in A$. 

\begin{prop}\label{starshaped}(see~\cite{graf2000foundations}[Proposition 1.2])
Let $\Gamma=\{x_{1}, \dots,x_{N}\}$ be a grid of size $N\geq1$. For every $i\!\in\{1, \dots, N\}$, $V_{x_{i}}(\Gamma)$ and $V^{o}_{x_{i}}(\Gamma)$ are star-shaped relative to $x_{i}$.
\end{prop}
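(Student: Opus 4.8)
The statement to prove is Proposition~\ref{starshaped}: for a grid $\Gamma=\{x_1,\dots,x_N\}$, each Vorono\"i cell $V_{x_i}(\Gamma)$ and each open Vorono\"i cell $V^o_{x_i}(\Gamma)$ is star-shaped relative to the generator $x_i$. The plan is to work directly from the definitions~(\ref{voronoidef}) and~(\ref{voroopendef}): fix $i$, take an arbitrary $\xi$ in the cell and $\lambda\in[0,1]$, set $\xi_\lambda = x_i+\lambda(\xi-x_i)$, and show $\xi_\lambda$ still lies in the cell by comparing its distance to $x_i$ with its distance to any other $x_j$, $j\neq i$.

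First I would record the elementary computation $|\xi_\lambda - x_i| = \lambda|\xi-x_i|$. Then, for $j\neq i$, I would estimate $|\xi_\lambda - x_j|$ from below: write $\xi_\lambda - x_j = \lambda(\xi - x_j) + (1-\lambda)(x_i - x_j)$, and apply the triangle inequality in the reverse form, $|\xi_\lambda - x_j| \ge \lambda|\xi-x_j| - (1-\lambda)\,|x_i - x_j|$ — wait, that goes the wrong way. The correct route is the convexity of the norm: $|\xi_\lambda - x_j| = |\lambda(\xi-x_j)+(1-\lambda)(x_i-x_j)| \le \lambda|\xi-x_j|+(1-\lambda)|x_i-x_j|$ is an upper bound, again not immediately what we want. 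Instead I would use the triangle inequality as $|\xi - x_j| \le |\xi - \xi_\lambda| + |\xi_\lambda - x_j|$ together with $|\xi - \xi_\lambda| = (1-\lambda)|\xi - x_i|$, giving $|\xi_\lambda - x_j| \ge |\xi - x_j| - (1-\lambda)|\xi - x_i|$. Since $\xi\in V_{x_i}(\Gamma)$ we have $|\xi - x_j| \ge |\xi - x_i|$, hence $|\xi_\lambda - x_j| \ge |\xi - x_i| - (1-\lambda)|\xi-x_i| = \lambda|\xi-x_i| = |\xi_\lambda - x_i|$. This shows $\xi_\lambda\in V_{x_i}(\Gamma)$. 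For the open cell the same chain of inequalities works with one of them strict: if $\xi\in V^o_{x_i}(\Gamma)$ then $|\xi-x_j| > |\xi-x_i|$ for all $j\neq i$, and if $\lambda=0$ then $\xi_\lambda = x_i$ lies in $V^o_{x_i}(\Gamma)$ trivially, while for $\lambda\in(0,1]$ the strict inequality $|\xi-x_j|>|\xi-x_i|$ propagates to $|\xi_\lambda-x_j|>|\xi_\lambda-x_i|$ by the same computation, so $\xi_\lambda\in V^o_{x_i}(\Gamma)$.

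The argument is short and uses only the triangle inequality, so there is no serious obstacle; the only point requiring a little care is the bookkeeping of strict versus non-strict inequalities in the open-cell case (and handling the degenerate value $\lambda=0$, where $\xi_\lambda=x_i$ lies in every open cell by convention/definition since the defining condition is vacuously or trivially satisfied). One should also note that the argument is valid for an arbitrary norm on $\mathbb{R}^d$ — no strict convexity is needed — which is consistent with the generality claimed around the definitions above.
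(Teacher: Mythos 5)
Your argument is correct and is precisely the standard one: the paper states this proposition without proof, citing Graf--Luschgy [Proposition 1.2], and the proof there is exactly your chain $|\xi_\lambda-x_j|\ge|\xi-x_j|-(1-\lambda)|\xi-x_i|\ge\lambda|\xi-x_i|=|\xi_\lambda-x_i|$. The only cosmetic point is that $x_i\in V^o_{x_i}(\Gamma)$ at $\lambda=0$ holds not vacuously but because the $N$ grid points are distinct, so $\min_{j\neq i}|x_i-x_j|>0$.
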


Now we discuss a sufficient condition to obtain a Vorono\"i diagram containing a bounded open Vorono\"i cell. The first result  in this direction is a rewriting Proposition 1.10 in~\cite{graf2000foundations}  for  Euclidean norms (stated here in view of our applications). 

\begin{prop}[$|\cdot|$ Euclidean norm]\label{prop:G&Lbounded} Let $(b_1, \ldots,b_{d+1})$ be an affine basis of $\mathbb{R}^{d}$ and let $b_0\!\in \mathring{\wideparen{\mathrm{Conv}(\{b_1, \ldots,b_{d+1}\})}}\neq\varnothing $. Set $\Gamma= \{0, b_1-b_0, \ldots, b_{d+1}-b_0\}$. 
Then, the  open Vorono\"i cell $V^{o}_{0}(\Gamma)$ generated by $0$ is bounded.
\end{prop}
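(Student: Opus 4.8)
The plan is to show that $V^o_0(\Gamma)$ is contained in some bounded set by using the affine-basis points $b_1-b_0,\ldots,b_{d+1}-b_0$ to ``surround'' the origin. Write $y_i = b_i - b_0$ for $i = 1,\ldots,d+1$, so $\Gamma = \{0, y_1,\ldots,y_{d+1}\}$. The hypothesis that $b_0$ lies in the interior of the simplex $\mathrm{Conv}(\{b_1,\ldots,b_{d+1}\})$ translates into: $0$ lies in the interior of $\mathrm{Conv}(\{y_1,\ldots,y_{d+1}\})$, i.e. there exist strictly positive barycentric weights $\lambda_1,\ldots,\lambda_{d+1}$ with $\sum_i \lambda_i = 1$ and $\sum_i \lambda_i y_i = 0$. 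The key geometric fact I would use is that for the Euclidean norm, $\xi \in V^o_0(\Gamma)$ forces $|\xi - y_i| > |\xi|$ for every $i$, which after expanding the square is equivalent to the linear inequality $(\xi \mid y_i) < \tfrac{1}{2}|y_i|^2$ for all $i = 1,\ldots,d+1$. So $V^o_0(\Gamma)$ is cut out by $d+1$ open half-spaces, each with inward normal $-y_i$.

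The main step is then to argue that the intersection of these $d+1$ half-spaces is bounded. Since the $y_i$ are the vertices of a simplex containing $0$ in its interior, the vectors $y_1,\ldots,y_{d+1}$ positively span $\mathbb{R}^d$: every vector in $\mathbb{R}^d$ is a nonnegative combination of them (this is precisely the content of $0$ being interior to their convex hull, via the weights $\lambda_i>0$). Concretely, given any unit vector $u$, I would like a lower bound of the form $\max_i (u \mid y_i) \geq c > 0$ with $c$ independent of $u$; then for $\xi \in V^o_0(\Gamma)$, applying the half-space inequalities with $u = \xi/|\xi|$ (when $\xi \neq 0$) gives $c\,|\xi| \leq \max_i (\xi \mid y_i) < \tfrac{1}{2}\max_i |y_i|^2$, hence $|\xi| < \tfrac{1}{2c}\max_i|y_i|^2$, proving boundedness. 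To get such a $c$: if $\max_i(u\mid y_i) \leq 0$ for some unit $u$, then $0 \geq \sum_i \lambda_i (u \mid y_i) = (u \mid \sum_i \lambda_i y_i) = (u \mid 0) = 0$, forcing $(u\mid y_i) = 0$ for all $i$ (since all $\lambda_i > 0$), which contradicts the $y_i$ spanning $\mathbb{R}^d$ (as $b_1,\ldots,b_{d+1}$ is an affine basis, the $y_i = b_i - b_0$ span $\mathbb{R}^d$ — any $u \neq 0$ has nonzero inner product with at least one). So $\max_i (u\mid y_i) > 0$ on the unit sphere, and by compactness of the sphere and continuity of $u \mapsto \max_i(u\mid y_i)$, the infimum $c := \min_{|u|=1}\max_i(u\mid y_i)$ is attained and strictly positive.

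I expect the only mild subtlety to be keeping the affine/linear bookkeeping straight — translating ``$b_0$ interior to the simplex on $\{b_1,\ldots,b_{d+1}\}$'' into ``$0 = \sum \lambda_i y_i$ with all $\lambda_i > 0$'' and ``$\{b_i\}$ affine basis'' into ``$\{y_i\}$ linear spanning set'' — and verifying that the Euclidean-norm computation $|\xi - y_i|^2 > |\xi|^2 \iff (\xi\mid y_i) < \tfrac12|y_i|^2$ is exactly what defines membership in the open Voronoi cell relative to the site $0$. Everything else is a standard compactness-on-the-sphere argument. Alternatively, and perhaps more cleanly, one can simply cite Proposition~1.10 of~\cite{graf2000foundations} directly once the setup is matched to its hypotheses (which is essentially the statement that if the generators other than $x_i$ surround $x_i$ in this affine sense, the open cell at $x_i$ is bounded); the proof above is the self-contained version for the Euclidean case stated for our applications. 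No step here is a genuine obstacle; the content is entirely in the observation that a finite intersection of half-spaces whose normals positively span the space is bounded.
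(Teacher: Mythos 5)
Your argument is correct. Note that the paper itself gives no proof of this proposition: it is stated as a rewriting of Proposition~1.10 of the cited monograph of Graf and Luschgy, so there is nothing in the text to compare against line by line. Your self-contained proof is sound and is essentially the standard argument behind that reference: the Euclidean identity $|\xi-y_i|^2>|\xi|^2\iff(\xi\mid y_i)<\tfrac12|y_i|^2$ exhibits $V^o_0(\Gamma)$ as an intersection of $d+1$ open half-spaces; the hypothesis that $b_0$ is interior to the simplex gives $0=\sum_i\lambda_i y_i$ with all $\lambda_i>0$, which (together with the fact that the $y_i$ span $\mathbb{R}^d$, since their pairwise differences $b_i-b_j$ already do) forces $\max_i(u\mid y_i)>0$ for every unit vector $u$; and compactness of the sphere upgrades this to a uniform $c>0$, yielding the explicit bound $|\xi|<\tfrac1{2c}\max_i|y_i|^2$. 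All the small points that could go wrong are handled: the $\lambda_i$ are strictly positive, so the vanishing of the convex combination of nonpositive terms forces each term to vanish; the $y_i$ are nonzero because $b_0$ is not a vertex; and the passage from the affine statement about the $b_i$ to the linear statement about the $y_i$ is done correctly. This is a complete proof, and where the paper merely cites, you have supplied the missing details.
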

\vspace{-0.2cm}
Let us provide now a geometrical criterion for a general norm  $|\cdot|$ on $\mathbb{R}^{d}$, let $\bar{B}_{|\cdot|}(x, r)$ denote the closed ball centered at $x$ with radius $r$ and let $S_{|\cdot|}(x, r)$ denote its sphere. 

\begin{prop}\label{recover}
Let $a_{1}, \dots, a_{k}\!\in S_{|\cdot|}(0,1)$ such that $S_{|\cdot|}(0,1)\subset\bigcup_{i=1}^{k}\bar{B}_{|\cdot|}(a_{i},1)$ (such a covering exists since $S_{|\cdot|}(0,1)$ is compact). If we choose $\Gamma=\{0, a_{1}, \dots, a_{k}\}$, then the Vorono\"i open set $V^{o}_{0}(\Gamma)\subset\bar{B}_{|\cdot|}(0, 1)$ and $\lambda_{d}\big(V^{o}_{0}(\Gamma)\big)>0$.
\end{prop}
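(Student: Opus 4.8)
The statement has two parts: first, that $V^o_0(\Gamma) \subset \bar B_{|\cdot|}(0,1)$ for $\Gamma = \{0, a_1, \dots, a_k\}$ when the $a_i$ lie on the unit sphere and their unit balls cover it; second, that this open cell has positive Lebesgue measure. I would treat the two parts separately, since they use quite different ideas.

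\textbf{Part 1: boundedness.} The plan is to argue by contradiction. Suppose $\xi \in V^o_0(\Gamma)$ but $|\xi| > 1$. Since $V^o_0(\Gamma)$ is star-shaped with respect to $0$ by Proposition~\ref{starshaped}, the whole segment $[0,\xi]$ lies in $V^o_0(\Gamma)$; in particular, choosing the point $\eta$ on this segment with $|\eta| = 1$ (which exists by continuity of $t \mapsto |t\xi|$ and the intermediate value theorem), we get $\eta \in V^o_0(\Gamma) \cap S_{|\cdot|}(0,1)$. Now $\eta$ lies on the unit sphere, so by the covering hypothesis there is some index $i$ with $\eta \in \bar B_{|\cdot|}(a_i,1)$, i.e. $|\eta - a_i| \le 1 = |\eta - 0| = |\eta|$. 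But $\eta \in V^o_0(\Gamma)$ forces $|\eta - 0| < |\eta - a_j|$ for every $j \ne 0$ in the index set, in particular $|\eta| < |\eta - a_i|$ — a contradiction. Hence no point of $V^o_0(\Gamma)$ has norm exceeding $1$, which gives $V^o_0(\Gamma) \subset \bar B_{|\cdot|}(0,1)$. (One small point to watch: the $a_i$ need not be distinct from each other or the argument still works; and if some $a_i = 0$ the covering hypothesis would be vacuously aided, but $a_i \in S_{|\cdot|}(0,1)$ rules that out.)

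\textbf{Part 2: positive measure.} Here the idea is that $0$ is an interior point of its own open Vorono\"i cell, because it is strictly closer to itself than to any other grid point in a neighborhood. Concretely, set $\rho = \min_{1\le i \le k} |a_i| = 1 > 0$ (all $a_i$ on the unit sphere), and consider any $\xi$ with $|\xi| < 1/2$. Then for each $i$, $|\xi - a_i| \ge |a_i| - |\xi| > 1 - 1/2 = 1/2 > |\xi| = |\xi - 0|$, so $\xi \in V^o_0(\Gamma)$. Thus the open ball $B_{|\cdot|}(0, 1/2)$ is contained in $V^o_0(\Gamma)$, and since all norms on $\mathbb{R}^d$ are equivalent this ball has strictly positive Lebesgue measure, whence $\lambda_d\big(V^o_0(\Gamma)\big) \ge \lambda_d\big(B_{|\cdot|}(0,1/2)\big) > 0$.

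\textbf{Main obstacle.} There is no serious obstacle here — this is essentially a clean geometric argument. The only place requiring a little care is Part 1: one must invoke the star-shapedness (Proposition~\ref{starshaped}) to reduce an arbitrary far-away point of the cell to a point on the unit sphere, since without it the covering hypothesis (which only controls the sphere) would not directly bite. It is also worth being careful that the inequality defining the open cell is strict ($<$) while the covering gives a non-strict inequality ($\le$), and it is precisely this mismatch — strict versus non-strict — that yields the contradiction in Part 1 rather than merely a boundary coincidence.
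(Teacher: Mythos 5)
Your proof is correct and follows essentially the same route as the paper: the boundedness part uses star-shapedness to project a hypothetical far-away cell point onto the unit sphere and then derives a contradiction between the strict inequality defining $V^o_0(\Gamma)$ and the non-strict covering inequality $|\eta-a_i|\le 1=|\eta|$ (the paper phrases this via the closed cells $V_{a_j}(\Gamma)$, you phrase it directly, but it is the same argument). For the positive-measure part the paper simply notes that $V^o_0(\Gamma)$ is an open set containing $0$; your explicit exhibition of the ball $B_{|\cdot|}(0,1/2)$ inside the cell is a slightly more detailed version of the same observation.
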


\begin{proof}
As $S_{|\cdot|}(0,1)\subset\bigcup_{i=1}^{k}\bar{B}_{|\cdot|}(a_{i},1)$, for every $\xi\!\in S_{|\cdot|}(0, 1)$, there exists $j\!\in\{1, \dots, k\}$ such that $\left|\xi-a_{j}\right|\leq1=\left|\xi\right|$. If $\Gamma=\{0, a_{1}, \dots, a_k\}$,  then
\begin{equation}\label{unisphere}
\forall \xi\!\in S_{|\cdot|}(0, 1), \;\;\;\exists\,j\!\in\{1, \dots, k\}\;\text{such that}\;\xi\!\in V_{a_{j}}(\Gamma).
\end{equation}

Assume that  there exists $\xi\!\in V^{o}_{0}(\Gamma)\setminus\bar{B}_{|\cdot|}(0, 1)$. Since $V^{o}_{0}(\Gamma)$ is star-shaped relatively to $0$ and $\frac{1}{\left|\xi\right|}\in(0,1)$, we have
$\frac{\xi}{\left|\xi\right|}\!\in S_{|\cdot|}(0,1)\cap V^{o}_{0}(\Gamma)$. 
This contradicts (\ref{unisphere}) since $V_{0}^{o}(\Gamma)\cap V_{a_{j}}(\Gamma)\neq\varnothing,\; j=1,\dots, k. $
Consequently, $V^{o}_{0}(\Gamma)\subset\bar{B}_{|\cdot|}(0, 1)$. Finally,   $V^{o}_{0}(\Gamma)$ is an open set containing $0$, therefore, $\lambda_{d}\big(V^{o}_{0}(\Gamma)\big)>0$.
\end{proof}
The idea of the above proposition is to cover the unit sphere centered at the origin by a finite number of unit balls centered on the unit sphere. This leads us to introduce the following definition. 
\begin{defn}
We define the {\em minimal sphere covering number} $c(d,|\cdot|)$ as follows, 
\[c(d,|\,\cdot\,|)\coloneqq\min\Big\{k:\;\exists\{a_{1},\dots, a_{k}\}\subset S_{|\cdot|}(0,1)\;\text{such that}\;S_{|\cdot|}(0,1)\subset\bigcup_{i=1}^{k}\bar{B}_{|\cdot|}(a_{i},1)\Big\}<+\infty.\]
\end{defn}
\vspace{-0.2cm}
\noindent The index $c(d,|\cdot|)$ is finite since the unit sphere is a compact set in $\mathbb{R}^{d}$. Among all the possible norms, we will focus on the isotropic $\ell_{r}$-norms on $\mathbb{R}^{d}$. We show some examples of the minimal covering number $c(d, |\cdot|_{r})$ in the following proposition (whose proof is postponed to Appendix).
\begin{prop}
\label{ldpprop}
\begin{enumerate}[$(i)$]
\item $c(1, |\cdot|)=2$, where $|\cdot|$ denotes the absolute value.
\item $c(2, |\cdot|_{1})=2$ and $c(2, |\cdot|_{r})=3$ for every $1<r<+\infty$.
\item $c(d, |\cdot|_{\infty})=2$ for every dimension $d$.
\item Let $r\geq1$   such that $2^{r}\ge d$, then $c(d, |\cdot|_{r})\leq2d$.
\end{enumerate}
\end{prop}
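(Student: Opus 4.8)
\textbf{Proof proposal for Proposition~\ref{ldpprop}.}

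The plan is to handle the four items essentially independently, since each concerns a different norm or dimension, and the common thread is to exhibit an explicit economical covering of the unit sphere by unit closed balls centered on it (for the upper bounds) and to rule out smaller coverings by a volume or geometric obstruction (for the lower bounds). Throughout I would use the elementary fact that $a\in S_{|\cdot|}(0,1)$ and $|\xi - a|\le 1$ together with $\xi\in S_{|\cdot|}(0,1)$ can be read off very concretely once the shape of the unit ball is known.

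For item $(i)$, on $\mathbb{R}$ the unit sphere is $\{-1,+1\}$, and $\bar B(-1,1)=[-2,0]$, $\bar B(1,1)=[0,2]$, so two balls centered at $\pm1$ cover $\{-1,1\}$ while one ball centered at either point misses the other; hence $c(1,|\cdot|)=2$. For item $(iii)$, for the $\ell_\infty$-norm in any dimension, take $a_1=(1,1,\dots,1)$ and $a_2=-a_1$; I would check that for any $\xi$ in the unit sphere (so $\|\xi\|_\infty=1$, each coordinate in $[-1,1]$), either $\|\xi-a_1\|_\infty\le 1$ or $\|\xi+a_1\|_\infty\le 1$ — indeed writing $\xi=(\xi^1,\dots,\xi^d)$, at least "half" of the coordinates... more carefully: $\|\xi - a_1\|_\infty = \max_j |\xi^j - 1| = \max_j(1-\xi^j)\le 1$ always since $\xi^j\ge -1$; so in fact $a_1$ alone covers the whole sphere? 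That would give $c=1$, which contradicts the claim, so I must recheck: $\bar B_{\infty}(a_1,1)$ is the cube $[0,2]^d$, which does not contain, e.g., $(-1,0,\dots,0)\in S_\infty(0,1)$. So one ball is not enough; but $\bar B_\infty(a_1,1)\cup\bar B_\infty(-a_1,1)=[0,2]^d\cup[-2,0]^d$ also fails to contain $(1,-1,0,\dots,0)$ when $d\ge 2$. Hence the correct choice is not antipodal; instead I would take $a_1 = (1,1,\dots,1)$ but think again — the right pair for $\ell_\infty$ is presumably $a_1=(1,0,\dots,0)$ and $a_2=(-1,0,\dots,0)$: then $\bar B_\infty(a_1,1)=[0,2]\times[-1,1]^{d-1}$ and $\bar B_\infty(a_2,1)=[-2,0]\times[-1,1]^{d-1}$, whose union is $[-2,2]\times[-1,1]^{d-1}\supset[-1,1]^d\supset S_\infty(0,1)$, so two balls suffice, and one cannot (the unit ball has nonempty interior points of the sphere not all within one translate). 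This is the argument I would write out cleanly, and I would give the matching lower bound $c\ge 2$ from the observation that a single unit ball $\bar B(a,1)$, $a$ on the sphere, cannot contain the antipode $-a$ for any norm.

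For item $(ii)$, the two-dimensional cases: for $\ell_1$ the unit sphere is the diamond with vertices $(\pm1,0),(0,\pm1)$; I would show $a_1=(1,0)$, $a_2=(-1,0)$ work — $\bar B_1((1,0),1)$ is the diamond centered at $(1,0)$, and I would verify it covers the two edges of the original diamond in the half-plane $x\ge 0$, with the other ball covering $x\le 0$ — so $c(2,|\cdot|_1)=2$. For $1<r<+\infty$ the sphere is a strictly convex closed curve; I would first argue $c\ge 3$: any two unit balls centered on the sphere cover an arc strictly less than the whole (using strict convexity, a closed unit ball centered at $a$ on the sphere meets the sphere in an arc strictly contained in an open half, so two of them cannot close up the loop — this needs a short continuity/connectedness argument), and then exhibit three points, e.g. the vertices of an inscribed "equilateral-like" triangle (for $r=2$ literally the cube roots of unity; for general $r$, three symmetric points $a_k$, $k=0,1,2$, obtained by the order-$3$ symmetry that the $\ell_r$ ball in $\mathbb{R}^2$ does \emph{not} have — so instead I'd pick three points adapted to the shape and check the covering by direct estimation), giving $c=3$. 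I expect item $(ii)$, specifically producing a clean covering by three unit balls for \emph{every} $r\in(1,\infty)$, to be the main obstacle, because the $\ell_r$-disk lacks $3$-fold symmetry and the verification that the three chosen balls leave no gap requires an explicit inequality argument rather than a symmetry shortcut.

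For item $(iv)$, with $r\ge 1$ and $2^r\ge d$, I would take the $2d$ points $\pm e_1,\dots,\pm e_d$ (the standard basis vectors and their negatives, each on the $\ell_r$-sphere). Given $\xi$ with $\|\xi\|_r=1$, pick a coordinate $k$ with $|\xi^k|$ maximal, so $|\xi^k|^r\ge 1/d$, i.e. $|\xi^k|\ge d^{-1/r}$; let $a=\operatorname{sign}(\xi^k)\,e_k$. Then $\|\xi-a\|_r^r = (1-|\xi^k|)^r + \sum_{j\ne k}|\xi^j|^r = (1-|\xi^k|)^r + 1 - |\xi^k|^r$, and I would show this is $\le 1$, i.e. $(1-|\xi^k|)^r\le |\xi^k|^r$, i.e. $1-|\xi^k|\le |\xi^k|$, i.e. $|\xi^k|\ge 1/2$ — which follows from $|\xi^k|\ge d^{-1/r}\ge 1/2$ precisely under the hypothesis $2^r\ge d$. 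Hence these $2d$ balls cover the sphere and $c(d,|\cdot|_r)\le 2d$. I would then note that in all four items the required covering exists with balls centered \emph{on} the sphere, so Proposition~\ref{recover} applies and yields the bounded open Voronoï cell needed downstream.
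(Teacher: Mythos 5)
Items $(i)$, $(iii)$ and $(iv)$ of your proposal are correct and essentially coincide with the paper's proof: for $(iii)$ you eventually land on the same centers $\pm e_1$ as the paper (your containment $\bar B_\infty(e_1,1)\cup\bar B_\infty(-e_1,1)=[-2,2]\times[-1,1]^{d-1}\supset[-1,1]^d$ is in fact cleaner than the paper's four-case check), and for $(iv)$ your computation $\left|\xi-\operatorname{sign}(\xi^k)e_k\right|_r^r=(1-|\xi^k|)^r+1-|\xi^k|^r\le1$ whenever $|\xi^k|\ge d^{-1/r}\ge 1/2$ is exactly the paper's argument. The problems are all in item $(ii)$.

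First, your covering of the $\ell_1$-sphere by $\bar B_1\big((1,0),1\big)\cup\bar B_1\big((-1,0),1\big)$ is false: the point $(0,1)$ lies on $S_{|\cdot|_1}(0,1)$ but $|(0,1)-(\pm1,0)|_1=2$. More generally, for a point $(t,1-t)$ on the edge joining $(0,1)$ to $(1,0)$ one has $|(t,1-t)-(1,0)|_1=2(1-t)$, which exceeds $1$ as soon as $t<1/2$. The correct choice (the one in the paper) is to center the two balls at midpoints of two opposite edges, e.g.\ $a_1=(-\tfrac12,\tfrac12)$ and $a_2=(\tfrac12,-\tfrac12)$; then $\bar B_1(a_1,1)$ contains the whole edge from $(0,1)$ to $(-1,0)$ and the halves of the two adjacent edges nearest to it, and $a_2$ handles the rest by central symmetry.

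Second, for $1<r<+\infty$ you do not actually establish the upper bound $c(2,|\cdot|_r)\le 3$: you correctly identify that the $\ell_r$-disk has no order-$3$ symmetry and say you would ``pick three points adapted to the shape and check the covering by direct estimation,'' but no points and no estimate are given, and this is precisely the nontrivial content of the item. The paper takes $a_1=(0,1)$, $a_2=\big((1-2^{-r})^{1/r},-\tfrac12\big)$, $a_3=-\big((1-2^{-r})^{1/r},\tfrac12\big)$ wait, $a_3=\big(-(1-2^{-r})^{1/r},-\tfrac12\big)$, and verifies the covering via the $\tfrac1r$-H\"older property of $u\mapsto u^{1/r}$ together with convexity of $y\mapsto\big||y|^r-2^{-r}\big|+|y+\tfrac12|^r$ on $[-1,\tfrac12]$, checking the value $1$ at both endpoints. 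Your lower bound $c(2,|\cdot|_r)\ge 3$ is also only a sketch (``each ball meets the sphere in an arc strictly contained in an open half \dots this needs a short continuity/connectedness argument''); the claim is plausible for strictly convex norms but is delicate precisely because for $r\downarrow 1$ the covered arc tends to a full closed half-sphere, so the strictness must be quantified. The paper instead argues concretely: if $a_2=-a_1$ with $a_1=(x,y)$, $x\ge y\ge0$, the rotated points $(y,-x)$ and $(-y,x)$ are at $\ell_r$-distance $\big((x+y)^r+(x-y)^r\big)^{1/r}>(2x^r)^{1/r}\ge1$ from both centers, and if $a_2$ is not antipodal one of these two points lies on the wrong side of the line through $0$ and $a_1$. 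Until you supply an explicit three-ball covering and a complete two-ball obstruction, item $(ii)$ remains unproved.
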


\subsection{A general condition for probability measure characterization}\label{22d}

Let $\Gamma=\{x_{1}, \dots,x_{N}\}$ be a grid in which there exists at least an $x_{i_{0}}\!\in\Gamma$ such that the open Vorono\"i cell $V^{o}_{x_{i_{0}}}(\Gamma)$ is bounded and non-empty. Based on such a grid, one can construct an approximate identity as follows. Let  $\varphi : \mathbb{R}^{d}\rightarrow\mathbb{R}_{+}$ be the function defined by $ \displaystyle\varphi(\xi)=\min_{a\in\Gamma\setminus\{x_{i_{0}}\}}\left|\xi-a\right|^{p}-\min_{a\in\Gamma}\left|\xi-a\right|^{p}$.
The function $\varphi$ is clearly nonnegative, continuous and $\{\varphi>0\}=V^{o}_{x_{i_{0}}}(\Gamma)$ so that supp$(\varphi)=\overline{V^{o}_{x_{i_{0}}}(\Gamma)}$ is compact. Hence, $\int\varphi\, d\lambda_{d}\in(0, +\infty)$ since $\varphi(x_{i_{0}})=d\big(x_{i_{0}},\Gamma\setminus\{x_{i_{0}}\}\big)>0$ and we can normalize $\varphi$ by setting $\varphi_{1}(\xi)\coloneqq\frac{\varphi(x_{i_{0}}+\xi)}{\int\varphi d\lambda_{d}}$. For every $\varepsilon>0$, we define
$\varphi_{\varepsilon}(\xi)\coloneqq\frac{1}{\varepsilon^{d}}\varphi_{1}\Big(\frac{\xi}{\varepsilon}\Big)$, then $(\varphi_{\varepsilon})_{\varepsilon>0}$ is clearly an approximate identity (see~\cite{grafakos2008classical}[Section 1.2.4]). 

The following theorem gives conditions on the $L^{p}$-quantization error function to characterize a probability measure.

\begin{thm}[Static characterization]\label{charstat}
Let $p\in[1,+\infty)$,  let $|\cdot|$ be a norm on $\mathbb{R}^{d}$ and let $N\geq c(d, |\cdot|)+1$,  or $N\ge d+2$ if $|\cdot|$ is Euclidean. Then, the answer to Question~1 is positive i.e. 
if there exists a constant $C$ such that $e_{N,p}^{p}(\mu,\cdot)=e_{N,p}^{p}(\nu,\cdot)+C$, $\mu,\nu\!\in \mathcal{P}_{p}(\mathbb{R}^{d})$, then $\mu=\nu$. The constant $C$ is a posteriori $0$.
\end{thm}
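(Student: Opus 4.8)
The plan is to exploit the approximate identity $(\varphi_\varepsilon)_{\varepsilon>0}$ built just before the statement from a grid $\Gamma=\{x_{i_0}, a_1,\dots,a_k\}$ whose open Vorono\"i cell $V^o_{x_{i_0}}(\Gamma)$ is bounded and non-empty. The existence of such a $\Gamma$ at level $N$ is exactly what Proposition~\ref{recover} (for a general norm, needing $k=c(d,|\cdot|)$ extra points, hence $N\ge c(d,|\cdot|)+1$) and Proposition~\ref{prop:G\&Lbounded} (for the Euclidean norm, needing $d+1$ extra points, hence $N\ge d+2$) provide. First I would translate the hypothesis $e_{N,p}^p(\mu,\cdot)=e_{N,p}^p(\nu,\cdot)+C$ into a statement about the function $\varphi$: since $\varphi(\xi)=\min_{a\in\Gamma\setminus\{x_{i_0}\}}|\xi-a|^p-\min_{a\in\Gamma}|\xi-a|^p$ is a difference of two $L^p$-quantization cost integrands (one for the full grid $\Gamma$ of size $\le N$, one for the grid $\Gamma\setminus\{x_{i_0}\}$ of size $\le N-1\le N$), integrating against $\mu$ and against $\nu$ and subtracting makes the constant $C$ cancel, yielding
\begin{equation*}
\int_{\mathbb{R}^d}\varphi(\xi)\,\mu(d\xi)=\int_{\mathbb{R}^d}\varphi(\xi)\,\nu(d\xi).
\end{equation*}
The same identity holds for every translate and every dilation of $\Gamma$, because translating and dilating the grid is still an admissible grid of the same size; concretely, for each $y\in\mathbb{R}^d$ and $\varepsilon>0$ one gets $\int \varphi_\varepsilon(y-\xi)\,\mu(d\xi)=\int \varphi_\varepsilon(y-\xi)\,\nu(d\xi)$, i.e. $\mu*\check\varphi_\varepsilon=\nu*\check\varphi_\varepsilon$ as functions on $\mathbb{R}^d$ (here $\check\varphi_\varepsilon(\xi)=\varphi_\varepsilon(-\xi)$, with the appropriate sign bookkeeping coming from the reflection in the convolution).

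Next I would let $\varepsilon\to 0$. Since $(\varphi_\varepsilon)_{\varepsilon>0}$ is an approximate identity, for any bounded continuous test function $f$ we have $\int f\,d(\mu*\check\varphi_\varepsilon)\to\int f\,d\mu$ and likewise for $\nu$ (standard mollification: $\mu*\check\varphi_\varepsilon\Rightarrow\mu$ weakly). Passing to the limit in the equality $\mu*\check\varphi_\varepsilon=\nu*\check\varphi_\varepsilon$ tested against $f$ gives $\int f\,d\mu=\int f\,d\nu$ for all $f\in C_b(\mathbb{R}^d)$, hence $\mu=\nu$. Finally, once $\mu=\nu$, the original identity $e_{N,p}^p(\mu,\cdot)=e_{N,p}^p(\nu,\cdot)+C=e_{N,p}^p(\mu,\cdot)+C$ forces $C=0$.

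The main obstacle — and the place where the geometric input is essential — is the very first reduction: to make $\varphi$ a genuine, compactly supported, nonnegative bump with nonempty interior so that $(\varphi_\varepsilon)$ really is an approximate identity (in particular $\int\varphi\,d\lambda_d\in(0,+\infty)$), one needs a Vorono\"i diagram of size $N$ with a bounded open cell, and this is precisely the content of the covering-number conditions $N\ge c(d,|\cdot|)+1$ or $N\ge d+2$. The rest of the argument is the routine mollifier/weak-convergence machinery. One technical point to be careful about is the bookkeeping of grid sizes (the cost integrand $\min_{a\in\Gamma\setminus\{x_{i_0}\}}|\cdot-a|^p$ uses only $N-1$ points, which is fine since $e_{N,p}$ is defined by taking a minimum over $N$ points and repeating a point is allowed, as noted after Definition~\ref{defquanerrf}) and the handling of the $1/p$-th power: it is cleanest to work with $e_{N,p}^p$ throughout, exactly as the statement is phrased, so that the integrands are linear in the measure and the constant $C$ genuinely cancels.
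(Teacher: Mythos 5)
Your proposal is correct and follows essentially the same route as the paper's proof: choose via Proposition~\ref{recover} (or Proposition~\ref{prop:G&Lbounded} in the Euclidean case) a grid with a bounded open Vorono\"i cell, observe that $e_{N,p}^{p}$ evaluated at the translated/dilated grid with one point doubled versus the full grid produces exactly $\varphi_{\varepsilon}*\mu$ up to normalization so the constant $C$ cancels, and conclude by letting $\varepsilon\to0$ through the approximate identity. The technical points you flag (repeating a grid point to keep the tuple at length $N$, working with the $p$-th power so the integrand is linear in the measure) are precisely the ones the paper handles.
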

\begin{proof}    
Following Proposition~\ref{prop:G&Lbounded} and~\ref{recover}, we choose a grid $\Gamma=\{0, a_{1},\dots ,a_{N-1}\}$ such that
$V^{o}_{0}(\Gamma)$ is bounded and $\lambda_{d}\big(V^{o}_{0}(\Gamma)\big)>0$. We define $\varphi: \mathbb{R}^{d}\rightarrow \mathbb{R}_{+}$, by $\varphi(\xi)=\min_{a\in\Gamma\setminus\{0\}}\left|\xi-a\right|^{p}-\min_{a\in\Gamma}\left|\xi-a\right|^{p}=\big(\min_{a\in\Gamma\setminus\{0\}}\left|\xi-a\right|^{p}-\left|\xi\right|^{p}\big)_{+}$  
and $(\varphi_{\varepsilon})_{\varepsilon>0}$ by $\varphi_{\varepsilon}(\xi)\coloneqq\frac{1}{C_{\varphi}\varepsilon^{d}}\varphi\Big(\frac{\xi}{\varepsilon}\Big)$, 
where $C_{\varphi}=\int\varphi \,d\lambda_{d}$. For any $x\!\in\mathbb{R}^{d}$,
\begin{align}
\varphi_{\varepsilon}*\mu(x)&=\int_{\mathbb{R}^{d}}\varphi_{\varepsilon}(x-\xi)\mu(d\xi)=\int_{\mathbb{R}^{d}}\frac{1}{\varepsilon^{d}}\frac{\varphi(\frac{x-\xi}{\varepsilon})}{\int\varphi d\lambda_{d}}\mu(d\xi)\nonumber\\
&=\frac{1}{C_{\varphi}\varepsilon^{d}}\int_{\mathbb{R}^{d}}\bigg(\min_{a\in\Gamma\setminus\{0\}}\left|\frac{x-\xi}{\varepsilon}-a\right|^{p}-\min_{a\in\Gamma}\left|\frac{x-\xi}{\varepsilon}-a\right|^{p}\bigg)\mu(d\xi)\nonumber\\
&=\frac{1}{C_{\varphi}\varepsilon^{d+p}}\bigg[\int_{\mathbb{R}^{d}}\min_{a\in\Gamma\setminus\{0\}}\left|x-\varepsilon a-\xi\right|^{p}\mu(d\xi)-\int_{\mathbb{R}^{d}}\min_{a\in\Gamma}\left|x-\varepsilon a-\xi\right|^{p}\mu(d\xi)\bigg].\nonumber
\end{align}

If we define two $N$-tuples $\tilde{x}$ and $\tilde{x}_{0}$ as $\tilde{x}=(x-\varepsilon a_{1},x-\varepsilon a_{1},x-\varepsilon a_{2},\dots,x-\varepsilon a_{N-1})$ and $\tilde{x}_{0}=(x,x-\varepsilon a_{1},x-\varepsilon a_{2},\dots,x-\varepsilon a_{N-1})$, then 
\[\int_{\mathbb{R}^{d}}\min_{a\in\Gamma\setminus\{0\}}\left|x-\varepsilon a-\xi\right|^{p}\mu(d\xi)=e_{N,p}^{p}(\mu,  \tilde{x})\;\text{and}\;\int_{\mathbb{R}^{d}}\min_{a\in\Gamma}\left|x-\varepsilon a-\xi\right|^{p}\mu(d\xi)=e_{N,p}^{p}(\mu,  \tilde{x}_{0}).\] 
Hence,  $\varphi_{\varepsilon}*\mu(x)=\frac{1}{C_{\varphi}\varepsilon^{d+p}}\big(e_{N,p}^{p}(\mu,  \tilde{x}) - e_{N,p}^{p}(\mu,  \tilde{x}_{0})\big)$.

The assumption $e_{N,p}^{p}(\mu,\cdot)=e_{N,p}^{p}(\nu,\cdot)+C$ implies that $e_{N,p}^{p}(\mu, \tilde{x})-e_{N,p}^{p}(\mu, \tilde{x}_{0}) =e_{N,p}^{p}(\nu, \tilde{x})-e_{N,p}^{p}(\nu, \tilde{x}_{0})$,
so that, for every $x\!\in\mathbb{R}^{d}$ and every $\varepsilon>0$, $\varphi_{\varepsilon}*\mu(x)=\varphi_{\varepsilon}*\nu(x)$. 
        
        One can finally conclude that $\mu=\nu$ by letting $\varepsilon\rightarrow0$ since $(\varphi_{\varepsilon})_{\varepsilon>0}$ is an approximate identity (see~\cite{rudin1991functional}[Theorem 6.32]).     Hence $C=0$.
\end{proof}

The following theorem shows that the pointwise convergence of the $L^{p}$-mean quantization error function is a necessary and sufficient condition for $\mathcal{W}_{p}$-convergence of probability distributions in $\mathcal{P}_{p}(\mathbb{R}^{d})$.

\begin{thm}[$\mathcal{W}_{p}$-convergence characterization]\label{charcvg}
Let $p\in[1,+\infty)$ and let $|\cdot|$ be any norm on $\mathbb{R}^{d}$. Let $\mu_{n} \!\in \mathcal{P}_{p}(\mathbb{R}^{d})$ for $n\!\in\mathbb{N}^{*}\!\cup\!\{\infty\}$.   The following properties are equivalent:
\begin{enumerate}[$(i)$]
\item $\mathcal{W}_{p}(\mu_{n}, \mu_{\infty})\xrightarrow{\;n\rightarrow+\infty\;}0$,
\item $\forall N\geq1,\;\;\;e_{N,p}(\mu_{n}, \cdot)\xrightarrow{\;n\rightarrow+\infty\;}e_{N,p}(\mu_{\infty}, \cdot)$ uniformly on $\mathbb{R}^{d}$,
\item $\exists\,N\geq c(d,|\cdot|)+1 \mbox{ or } N\ge d+2 \mbox{ if } |\cdot| \mbox{ is Euclidean  such that},\; e_{N,p}(\mu_{n}, \cdot)\xrightarrow{\;n\to+\infty\;}e_{N,p}(\mu_{\infty}, \cdot)$ pointwise on $\mathbb{R}^{d}$.
\end{enumerate}
\end{thm}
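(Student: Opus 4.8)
The plan is to prove the cyclic chain of implications $(i)\Rightarrow(ii)\Rightarrow(iii)\Rightarrow(i)$. The first two are essentially free: $(i)\Rightarrow(ii)$ is exactly inequality~(\ref{06}) established in the introduction, namely $\|e_{N,p}(\mu_n,\cdot)-e_{N,p}(\mu_\infty,\cdot)\|_{\sup}\le\mathcal{W}_p(\mu_n,\mu_\infty)$, which gives uniform convergence on $(\mathbb{R}^d)^N$ for every $N$ simultaneously. And $(ii)\Rightarrow(iii)$ is trivial since uniform convergence for all $N$ implies pointwise convergence for the one particular level $N\geq c(d,|\cdot|)+1$ (or $N\ge d+2$ in the Euclidean case). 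So the entire content is the implication $(iii)\Rightarrow(i)$, and this is where the work — and the main obstacle — lies.

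For $(iii)\Rightarrow(i)$ I would combine the Vorono\"i-based approximate identity machinery from the proof of Theorem~\ref{charstat} with the characterization of $\mathcal{W}_p$-convergence in Theorem~\ref{was1}(a). First, fix the grid $\Gamma=\{0,a_1,\dots,a_{N-1}\}$ with $V_0^o(\Gamma)$ bounded and $\lambda_d(V_0^o(\Gamma))>0$, build $\varphi$ and $(\varphi_\varepsilon)_{\varepsilon>0}$ exactly as there, and recall the key identity $\varphi_\varepsilon*\mu(x)=\frac{1}{C_\varphi\varepsilon^{d+p}}\big(e_{N,p}^p(\mu,\tilde x)-e_{N,p}^p(\mu,\tilde x_0)\big)$, where $\tilde x,\tilde x_0$ are the explicit $N$-tuples depending on $x$ and $\varepsilon$. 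The pointwise convergence in $(iii)$ of $e_{N,p}(\mu_n,\cdot)$ — hence (since the limit is continuous and these are bounded where needed, or simply by continuity of $t\mapsto t^p$) of $e_{N,p}^p(\mu_n,\cdot)$ — then yields $\varphi_\varepsilon*\mu_n(x)\to\varphi_\varepsilon*\mu_\infty(x)$ for every fixed $x$ and $\varepsilon$. From $\varphi_\varepsilon*\mu_n\to\varphi_\varepsilon*\mu_\infty$ pointwise one deduces weak convergence $\mu_n\xRightarrow{(\mathbb{R}^d)}\mu_\infty$: test against $f\in C_c(\mathbb{R}^d)$ by writing $\int f\,d\mu_n=\int (f*\check\varphi_\varepsilon)\,d\mu_n+\text{error}$, where the error is uniformly small in $n$ because $f*\check\varphi_\varepsilon\to f$ uniformly as $\varepsilon\to0$ and the $\mu_n$ are probability measures; a $3\varepsilon$-argument then closes it. (Alternatively, and perhaps more cleanly, one observes that $\varphi_\varepsilon*\mu_n(x)=\int \varphi_\varepsilon(x-\xi)\mu_n(d\xi)=\mathbb{E}\,\psi_{x,\varepsilon}(X_n)$ for a fixed bounded continuous $\psi_{x,\varepsilon}$, and letting $x,\varepsilon$ vary over a countable dense family separates measures; but the smoothing-kernel route is standard and self-contained.)

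The remaining, and genuinely delicate, point is condition $(\beta)$ of Theorem~\ref{was1}(a): one must upgrade weak convergence to $\mathcal{W}_p$-convergence by showing $\int|\xi|^p\mu_n(d\xi)\to\int|\xi|^p\mu_\infty(d\xi)$ (taking $x_0=0$). This is exactly where the hypothesis must be used a second time — weak convergence alone does not control the $p$-th moments. The idea is that the quantization error at a fixed grid already encodes the $p$-th moment asymptotically: for the trivial grid reduced to a single point, $e_{N,p}^p(\mu,(0,0,\dots,0))=\int|\xi|^p\mu(d\xi)$, so applying $(iii)$ at the $N$-tuple $x=(0,\dots,0)$ gives $\int|\xi|^p\mu_n(d\xi)=e_{N,p}^p(\mu_n,0)\to e_{N,p}^p(\mu_\infty,0)=\int|\xi|^p\mu_\infty(d\xi)$ directly — note $e_{N,p}(\mu_n,\cdot)\to e_{N,p}(\mu_\infty,\cdot)$ pointwise includes this point, and raising to the $p$-th power preserves the convergence. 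Thus $(\beta)$ holds for free from the same hypothesis, with $x_0=0$. With $(\alpha)$ and $(\beta)$ in hand, Theorem~\ref{was1}(a) delivers $\mathcal{W}_p(\mu_n,\mu_\infty)\to0$, i.e. $(i)$, and the cycle is complete.

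The main obstacle I anticipate is the passage from $\varphi_\varepsilon*\mu_n\to\varphi_\varepsilon*\mu_\infty$ (pointwise, for each $\varepsilon$) to genuine weak convergence $\mu_n\xRightarrow{}\mu_\infty$: one needs tightness of $(\mu_n)_{n\ge1}$ to avoid mass escaping to infinity, and then the approximate-identity argument to identify the limit. Fortunately tightness is not an extra assumption — it follows from the uniform $p$-th moment bound $\sup_n\int|\xi|^p\mu_n(d\xi)<+\infty$, which is itself a consequence of the convergence $\int|\xi|^p\mu_n(d\xi)\to\int|\xi|^p\mu_\infty(d\xi)<+\infty$ just established (or, if one prefers to get tightness first, from any single convergent sequence of reals being bounded). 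So the logical order I would actually follow is: (1) evaluate $(iii)$ at $x=(0,\dots,0)$ to get $p$-th moment convergence and hence uniform $p$-th moment bounds and tightness; (2) run the approximate-identity argument to get $\mu_n\xRightarrow{}\mu_\infty$; (3) invoke Theorem~\ref{was1}(a) with $(\alpha)$ from step (2) and $(\beta)$ from step (1) to conclude $(i)$; and (4) note $(i)\Rightarrow(ii)\Rightarrow(iii)$ are immediate from~(\ref{06}).
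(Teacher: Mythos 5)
Your proposal is correct, and for the only substantive implication $(iii)\Rightarrow(i)$ it takes a genuinely different route from the paper. Both proofs start identically: evaluate the hypothesis at $\mathbf{0}=(0,\dots,0)$ to get $\int|\xi|^{p}\mu_{n}(d\xi)\to\int|\xi|^{p}\mu_{\infty}(d\xi)$, hence bounded $p$-th moments and tightness, and both finish by invoking Theorem~\ref{was1}. The difference is in how weak convergence $\mu_{n}\Rightarrow\mu_{\infty}$ is obtained. The paper extracts, via Prokhorov, an arbitrary weak limit point $\widetilde{\mu}_{\infty}$ along a subsequence, shows using the growth bound $|f_{x}(\xi)|\le C_{x,p}(1+|\xi|^{p-1})$ and uniform integrability that $e_{N,p}^{p}(\widetilde{\mu}_{\infty},\cdot)-e_{N,p}^{p}(\mu_{\infty},\cdot)$ is a constant, and then applies the static characterization (Theorem~\ref{charstat}) as a black box to conclude $\widetilde{\mu}_{\infty}=\mu_{\infty}$, so that all limit points coincide. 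You instead inline the Vorono\"i approximate-identity construction from the \emph{proof} of Theorem~\ref{charstat}, observe that the convolution identity $\varphi_{\varepsilon}*\mu(x)=\frac{1}{C_{\varphi}\varepsilon^{d+p}}\big(e_{N,p}^{p}(\mu,\tilde{x})-e_{N,p}^{p}(\mu,\tilde{x}_{0})\big)$ turns the pointwise hypothesis directly into $\varphi_{\varepsilon}*\mu_{n}(x)\to\varphi_{\varepsilon}*\mu_{\infty}(x)$, and then run a standard mollification/three-term argument against $f\in C_{c}(\mathbb{R}^{d})$ (the middle term $\int f(y)\big[\varphi_{\varepsilon}*\mu_{n}(y)-\varphi_{\varepsilon}*\mu_{\infty}(y)\big]dy\to 0$ by dominated convergence on the compact support of $f$, a step worth spelling out). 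What each approach buys: the paper's version is shorter given that Theorem~\ref{charstat} is already proved, and its subsequence argument is the template reused for the Hilbert-space analogue; yours avoids the subsequence extraction and the uniform-integrability estimate entirely (note the paper's exponent $\frac{p}{p-1}$ is degenerate at $p=1$ and needs the separate remark that $f_x$ is then bounded, which your route sidesteps), at the cost of reproving the convolution identity rather than citing the static theorem. Both are valid.
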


%%%%%%%%%

\begin{proof}[Proof of Theorem~\ref{charcvg}]
$(i)\Rightarrow(ii)$ is obvious from $(\ref{06})$.

\noindent$(ii)\Rightarrow(iii)$ is obvious.

\noindent$(iii)\Rightarrow(i)$ First of all, it follows from the convergence $e_{N,p}(\mu_{n},\cdot)\xrightarrow{n\rightarrow+\infty}e_{N,p}(\mu,\cdot)$ that
\begin{equation}\label{cvgmoment}
\hskip-0.1 cm e_{N,p}^{p}(\mu_{n},\textbf{0})\xrightarrow{\,n\rightarrow+\infty\,}e_{N,p}^{p}(\mu_{\infty},\textbf{0})\text{ i.e. } \int_{\mathbb{R}^{d}}\!\!\!\left|\xi\right|^{p}\mu_{n}(d\xi)\xrightarrow{\,n\rightarrow+\infty\,}\int_{\mathbb{R}^{d}}\!\!\!\left|\xi\right|^{p}\mu_{\infty}(d\xi)<+\infty,
\end{equation}
where $\textbf{0}=(0, \dots, 0)$. In particular, the sequence $\Big(\int_{\mathbb{R}^{d}}\left|\xi\right|^{p}\mu_{n}(d\xi)\Big)_{n\geq1}$ is bounded. Hence, the sequence of probability measures $(\mu_{n})_{n\geq1}$ is tight. 

Let $\widetilde{\mu}_{\infty}$ be a weak limiting probability distribution of $(\mu_{n})_{n\geq1}$ i.e. there exists a subsequence $\alpha(n)$ of $n$ such that $\mu_{\alpha(n)}\xRightarrow{(\mathbb{R}^{d})}\widetilde{\mu}_{\infty}$ as $n\rightarrow+\infty$.

Let $x=(x_{1}, \dots, x_{N})$ be any $N$-tuple in $(\mathbb{R}^{d})^{N}$. We define a continuous function $f_x:\mathbb{R}^{d}\rightarrow\mathbb{R}$ by $f_x(\xi)\coloneqq\min_{1\le i \le N}\left|\xi-x_{i}\right|^{p}-\left|\xi\right|^{p}$.
 Hence, owing to the elementary inequality $v^{p}-u^{p}\leq pv^{p-1}(v-u)$ for any $0\leq u\leq v<+\infty$, we derive 
 \begin{equation}\label{equiintegrable}
 \big|f_x(\xi)\big|\leq\max_{i \in\{1,\dots,N\}}p\big(\left|\xi\right|+\left|x_{i}\right|\big)^{p-1}\left|x_{i}\right|\leq C_{x, p}(1+\left|\xi\right|^{p-1}),
 \end{equation}
 where $C_{x, p}$ is a constant  depending on  $x$ and $p$.
 
Owing to  (\ref{cvgmoment}) and (\ref{equiintegrable}),  the sequence $\big(\int f_x^{\frac{p}{p-1}}d\mu_{n}\big)_{n\geq1}$ is bounded, hence $f_x$ is uniformly integrable with respect to $(\mu_{n})_{n\geq1}$ since $\frac{p}{p-1}>1$, so that $f_x$ is uniformly integrable with respect to any subsequence $(\mu_{\alpha(n)})_{n\geq1}$.
It follows that $\int_{\mathbb{R}^{d}}f_x(\xi)\mu_{\alpha(n)}(d\xi)\rightarrow\int_{\mathbb{R}^{d}}f_x(\xi)\widetilde{\mu}_{\infty}(d\xi),$ as $n\rightarrow+\infty$,
where
\begin{align}
&\int_{\mathbb{R}^{d}}f_x(\xi)\mu_{\alpha(n)}(d\xi)=\int_{\mathbb{R}^{d}}\big(\min_{i\in\{1,\dots,N\}}\left|\xi-x_{i}\right|^{p}-\left|\xi\right|^{p}\big)\mu_{\alpha(n)}(d\xi)=e_{N,p}^{p}(\mu_{\alpha(n)}, x) - e_{N,p}^{p}(\mu_{\alpha(n)}, \textbf{0}),\nonumber\\
&\text{and }\int_{\mathbb{R}^{d}}f_x(\xi)\widetilde{\mu}_{\infty}(d\xi)=e_{N,p}^{p}(\widetilde{\mu}_{\infty}, x) - e_{N,p}^{p}(\widetilde{\mu}_{\infty}, \textbf{0}).\nonumber
\end{align}

On the other hand,
$e_{N,p}^{p}(\mu_{\alpha(n)}, x) - e_{N,p}^{p}(\mu_{\alpha(n)}, \textbf{0})$ converges to $e_{N,p}^{p}(\mu_{\infty}, x) - e_{N,p}^{p}(\mu_{\infty}, \textbf{0})$ owing to the pointwise convergence in $(iii)$ at $\textbf{0}=(0,\dots,0)$ and $x=(x_{1}, \dots, x_{N})$.

Therefore, $e_{N,p}^{p}(\widetilde{\mu}_{\infty}, x) - e_{N,p}^{p}(\widetilde{\mu}_{\infty}, \textbf{0})= e_{N,p}^{p}(\mu_{\infty}, x) - e_{N,p}^{p}(\mu_{\infty}, \textbf{0})$,
which implies that for every $x\in(\mathbb{R}^{d})^{N},\;e_{N,p}^{p}(\widetilde{\mu}_{\infty}, x) - e_{N,p}^{p}(\mu_{\infty}, x)= C$,
where $C=e_{N,p}^{p}(\widetilde{\mu}_{\infty}, \textbf{0})-e_{N,p}^{p}(\mu_{\infty}, \textbf{0})$ is a real constant.
It follows from Theorem~\ref{charstat} that $\widetilde{\mu}_{\infty}=\mu_{\infty}$, which implies that $\mu_{\infty}$ is the the only limiting distribution of $(\mu_{n})_{n\geq1}$ for the weak convergence and consequently $\mu_{n}\xRightarrow{(\mathbb{R}^{d})}\mu$. We have already proved that $\int_{\mathbb{R}^{d}}\left|\xi\right|^{p}\mu_{n}(d\xi)\xrightarrow{n\rightarrow+\infty}\int_{\mathbb{R}^{d}}\left|\xi\right|^{p}\mu_{\infty}(d\xi)$ from ($\ref{cvgmoment}$), which finally shows that $\mathcal{W}_{p}(\mu_{n},\mu_{\infty})\xrightarrow{n\rightarrow+\infty}0$ owing to Theorem~\ref{was1}. \end{proof}

A careful reading of the proof shows that the following ``\`a la Paul L\'evy'' characterization result holds for limiting functions of $L^{p}$-quantization error functions.

\begin{cor}
Let $p\in[1+\infty)$. Let  $(\mu_{n})_{n\geq1}$ be a $\mathcal{P}_{p}(\mathbb{R}^{d})$-valued sequence. If 
\[
e_{N, p}(\mu_{n}, \cdot)\xrightarrow{n\rightarrow+\infty} f \;\text{ pointwise for some $N$ such that {\em static characterization} holds true}
\]
(Question~1), then there exists $\mu_{\infty}\!\in\mathcal{P}_{p}(\mathbb{R}^{d})$ such that $\mu_n \xRightarrow{(\mathbb{R}^{d})} \mu_\infty$ as $n\to +\infty$ and 
\begin{align}
f^p=e_{N, p}^{p}(\mu_{\infty}, \,\cdot\,)+\lim_{n}\int_{\mathbb{R}^{d}}\left|\xi\right|^{p}\mu_{n}(d\xi)-\int_{\mathbb{R}^{d}}\left|\xi\right|^{p}\mu_{\infty}(d\xi).\nonumber
\end{align}

\end{cor}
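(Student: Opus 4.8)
The plan is to reuse the machinery from the proof of Theorem~\ref{charcvg} almost verbatim, stopping just before the final appeal to Theorem~\ref{charstat}. First I would observe that evaluating the hypothesis $e_{N,p}(\mu_n,\cdot)\to f$ pointwise at $\mathbf{0}=(0,\dots,0)$ gives $\int_{\mathbb{R}^d}|\xi|^p\mu_n(d\xi)=e_{N,p}^p(\mu_n,\mathbf{0})\to f^p(\mathbf{0})<+\infty$, so in particular the sequence of $p$-th moments is bounded and $\lim_n\int_{\mathbb{R}^d}|\xi|^p\mu_n(d\xi)$ exists (being a convergent numerical sequence). Boundedness of the $p$-th moments forces tightness of $(\mu_n)_{n\ge1}$, so by Prokhorov's theorem there is a subsequence $\mu_{\alpha(n)}\xRightarrow{(\mathbb{R}^d)}\widetilde\mu_\infty$ for some probability measure $\widetilde\mu_\infty$; by Fatou (or lower semicontinuity of moments under weak convergence) $\widetilde\mu_\infty$ has finite $p$-th moment, hence $\widetilde\mu_\infty\in\mathcal{P}_p(\mathbb{R}^d)$.

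Next I would identify $\widetilde\mu_\infty$ and show it is independent of the subsequence, which is where the static characterization enters. For any $N$-tuple $x=(x_1,\dots,x_N)$ set $f_x(\xi)=\min_{1\le i\le N}|\xi-x_i|^p-|\xi|^p$; the bound $|f_x(\xi)|\le C_{x,p}(1+|\xi|^{p-1})$ from (\ref{equiintegrable}) together with the uniform bound on the $p$-th moments shows $f_x$ is uniformly integrable along $(\mu_n)$, so $\int f_x\,d\mu_{\alpha(n)}\to\int f_x\,d\widetilde\mu_\infty$. But $\int f_x\,d\mu_{\alpha(n)}=e_{N,p}^p(\mu_{\alpha(n)},x)-e_{N,p}^p(\mu_{\alpha(n)},\mathbf{0})\to f^p(x)-f^p(\mathbf{0})$ by hypothesis. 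Hence $e_{N,p}^p(\widetilde\mu_\infty,x)-e_{N,p}^p(\widetilde\mu_\infty,\mathbf{0})=f^p(x)-f^p(\mathbf{0})$ for every $x$; in other words $e_{N,p}^p(\widetilde\mu_\infty,\cdot)=f^p-f^p(\mathbf{0})+e_{N,p}^p(\widetilde\mu_\infty,\mathbf{0})$, a relation whose right-hand side does not depend on the chosen subsequence. If $\widetilde\mu_\infty'$ is the weak limit of another subsequence, the same computation gives $e_{N,p}^p(\widetilde\mu_\infty,\cdot)=e_{N,p}^p(\widetilde\mu_\infty',\cdot)+\text{const}$, so by the static characterization (Question~1, valid for this $N$) we get $\widetilde\mu_\infty=\widetilde\mu_\infty'$. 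Thus every weakly convergent subsequence of the tight sequence $(\mu_n)$ has the same limit, call it $\mu_\infty$, and therefore $\mu_n\xRightarrow{(\mathbb{R}^d)}\mu_\infty$.

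Finally I would pin down the announced formula for $f^p$. Apply the uniform-integrability argument above along the full sequence (now known to converge weakly to $\mu_\infty$): for every $x$, $\int f_x\,d\mu_n\to\int f_x\,d\mu_\infty=e_{N,p}^p(\mu_\infty,x)-e_{N,p}^p(\mu_\infty,\mathbf{0})$, while also $\int f_x\,d\mu_n=e_{N,p}^p(\mu_n,x)-e_{N,p}^p(\mu_n,\mathbf{0})\to f^p(x)-f^p(\mathbf{0})$. Equating, $f^p(x)=e_{N,p}^p(\mu_\infty,x)+\big(f^p(\mathbf{0})-e_{N,p}^p(\mu_\infty,\mathbf{0})\big)$ for all $x$, and since $f^p(\mathbf{0})=\lim_n\int|\xi|^p\mu_n(d\xi)$ and $e_{N,p}^p(\mu_\infty,\mathbf{0})=\int|\xi|^p\mu_\infty(d\xi)$, this is exactly the claimed identity
\[
f^p=e_{N,p}^p(\mu_\infty,\cdot)+\lim_n\int_{\mathbb{R}^d}|\xi|^p\mu_n(d\xi)-\int_{\mathbb{R}^d}|\xi|^p\mu_\infty(d\xi).
\]
The only genuinely delicate point is the interchange of limits/uniform integrability that lets weak convergence control $\int f_x\,d\mu_n$ despite $f_x$ being unbounded; but this is precisely inequality (\ref{equiintegrable}) combined with the moment bound from (\ref{cvgmoment}), already carried out inside the proof of Theorem~\ref{charcvg}, so there is no new obstacle — the corollary is essentially a repackaging of that proof, merely replacing ``$e_{N,p}^p(\mu_\infty,\cdot)$'' by the abstract pointwise limit $f^p$ and reading off the additive constant at the end.
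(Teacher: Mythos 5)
Your proof is correct and is essentially the argument the paper intends: the paper offers no separate proof of this corollary, asserting only that it follows from ``a careful reading of the proof'' of Theorem~\ref{charcvg}, and your write-up is exactly that reading (moment bound at $\mathbf{0}$, tightness, uniform integrability of $f_x$ via~(\ref{equiintegrable}), identification of all subsequential weak limits through the static characterization, then evaluation at $\mathbf{0}$ to read off the additive constant).
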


\vskip-.1cm 
\noindent
Now we will take advantage of what precedes to  introduce   a {\em quantization based distance} on $\mathcal{P}_{p}(\mathbb{R}^{d})$. 
Let $\mathcal{C}_{b}\big((\mathbb{R}^{d})^{N}, \mathbb{R}\big)$ denote the space of bounded $\mathbb{R}$-valued continuous functions defined on $(\mathbb{R}^{d})^{N}$ equipped with the sup norm  $\left\Vert\cdot\right\Vert_{\sup}$. Let $p\in[1,+\infty)$. If $\mu\!\in\mathcal{P}_{p}(\mathbb{R}^{d}),\,e_{N, p}(\mu, \cdot)-e_{N, p}(\delta_{0}, \cdot)\!\in\mathcal{C}_{b}\big((\mathbb{R}^{d})^{N}, \mathbb{R}\big)$  (note   $ e_{N, p}\big(\delta_{0}, (x_1,\ldots,x_N)\big) $ $\displaystyle = \min_{i=1,\ldots,N} |x_i|$) since  inequality (\ref{control}) implies that $\left\Vert e_{N, p}(\mu, \cdot)-e_{N, p}(\delta_{0}, \cdot)\right\Vert_{\sup}\!\leq\mathcal{W}_{p}(\mu, \delta_{0})\!=\!\Big[\int_{\mathbb{R}^{d}}\!\left|\xi\right|^{p}\mu(d\xi)\Big]^{1/p}\!<\!+\infty$. Then,  we define a function $\mathcal{Q}_{N, p}$ on $\mathcal{P}_{p}(\mathbb{R}^{d}) $ is defined  by
\begin{align}\label{Qnpdisdef}
(\mu, \nu)\longmapsto \mathcal{Q}_{N, p}(\mu, \nu)\coloneqq&\left\Vert \big(e_{N,p}(\mu, \cdot) - e_{N,p}(\delta_{0}, \cdot)\big) - \big(e_{N,p}(\nu, \cdot) - e_{N,p}(\delta_{0}, \cdot)\big)\right\Vert_{\sup}\nonumber\\
=&\left\Vert e_{N,p}(\mu, \cdot) - e_{N,p}(\nu, \cdot)\right\Vert_{\sup}.
\end{align}
For any $\mu, \nu\!\in\mathcal{P}_{p}(\mathbb{R}^{d})$, inequality~\eqref{control} implies $\mathcal{Q}_{N, p}(\mu, \nu)\leq\mathcal{W}_{p}(\mu, \nu)<+\infty$ so that $\mathcal{Q}_{N, p}(\mu, \nu)\in[0, +\infty)$. 
Combining Theorems~\ref{charstat} and~\ref{charcvg} implies  the following result.

\begin{cor}\label{cola}Let $p\in[1, +\infty)$.

\noindent $(a)$  $N_{d, p, |\cdot|}\leq c(d,|\cdot|)+1$ for any norm and $N_{d, p, |\cdot|}\le d+2$ if $|\cdot|$ is Euclidean.

\noindent$(b)$  If $N\geq c(d,|\cdot|)+1$ or $N\ge d+2$ if $|\cdot|$ is Euclidean, then $\mathcal{Q}_{N, p}$ defined by (\ref{Qnpdisdef}) is a distance on $\mathcal{P}_{p}(\mathbb{R}^{d})$ and $\mathcal{Q}_{N, p}$ is topologically equivalent to the Wasserstein distance $\mathcal{W}_{p}$.
\end{cor}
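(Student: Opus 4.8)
\textbf{Proof plan for Corollary~\ref{cola}.}

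The plan is to assemble the conclusion from the two theorems just proved, plus the elementary bound \eqref{control}, being careful about two distinct things: that $\mathcal{Q}_{N,p}$ is genuinely a distance (separation of points), and that it induces the same topology as $\mathcal{W}_p$ on $\mathcal{P}_p(\mathbb{R}^d)$.

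First I would dispatch part $(a)$: it is an immediate restatement of Theorems~\ref{charstat} and~\ref{charcvg}. Indeed, for $N\ge c(d,|\cdot|)+1$ (or $N\ge d+2$ in the Euclidean case), Theorem~\ref{charstat} gives a positive answer to \emph{Question~1} at level $N$ — in fact it even handles the version with an additive constant, which is stronger than what the bare definition \eqref{defndq} requires — and Theorem~\ref{charcvg}, specifically the implication $(iii)\Rightarrow(i)$, gives a positive answer to \emph{Question~2} at that level. Since $N_{d,p,|\cdot|}$ is the \emph{minimum} over such $N$, the bound follows. I would note explicitly that the thresholds $c(d,|\cdot|)+1$ and $d+2$ are exactly the hypotheses of both theorems, so no extra work is needed.

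For part $(b)$, fix $N\ge c(d,|\cdot|)+1$ (resp.\ $N\ge d+2$). I would first recall from the discussion preceding the statement that $\mathcal{Q}_{N,p}(\mu,\nu)=\Vert e_{N,p}(\mu,\cdot)-e_{N,p}(\nu,\cdot)\Vert_{\sup}$ is well-defined and finite, being bounded by $\mathcal{W}_p(\mu,\nu)<+\infty$ via \eqref{control}. Symmetry and the triangle inequality for $\mathcal{Q}_{N,p}$ are inherited verbatim from the sup-norm on $\mathcal{C}_b\big((\mathbb{R}^d)^N,\mathbb{R}\big)$ applied to the embedding $\mu\mapsto e_{N,p}(\mu,\cdot)-e_{N,p}(\delta_0,\cdot)$, and nonnegativity is obvious. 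The one substantive metric axiom is separation: if $\mathcal{Q}_{N,p}(\mu,\nu)=0$ then $e_{N,p}(\mu,\cdot)=e_{N,p}(\nu,\cdot)$ everywhere, hence $e_{N,p}^p(\mu,\cdot)=e_{N,p}^p(\nu,\cdot)$, so the hypothesis of Theorem~\ref{charstat} holds with $C=0$, whence $\mu=\nu$. Thus $\mathcal{Q}_{N,p}$ is a distance.

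Finally, topological equivalence. One direction is free: \eqref{control} says $\mathcal{Q}_{N,p}\le \mathcal{W}_p$, so every $\mathcal{W}_p$-convergent sequence is $\mathcal{Q}_{N,p}$-convergent; in particular every $\mathcal{Q}_{N,p}$-open set is $\mathcal{W}_p$-open, equivalently the identity map $(\mathcal{P}_p,\mathcal{W}_p)\to(\mathcal{P}_p,\mathcal{Q}_{N,p})$ is continuous. The converse is where Theorem~\ref{charcvg} does the heavy lifting and is the main point: suppose $\mathcal{Q}_{N,p}(\mu_n,\mu_\infty)\to0$. Then $e_{N,p}(\mu_n,\cdot)\to e_{N,p}(\mu_\infty,\cdot)$ uniformly on $(\mathbb{R}^d)^N$, a fortiori pointwise, so condition $(iii)$ of Theorem~\ref{charcvg} is satisfied at this level $N$, and therefore $\mathcal{W}_p(\mu_n,\mu_\infty)\to0$. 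Since both $\mathcal{W}_p$ and $\mathcal{Q}_{N,p}$ are metrics, sequential convergence determines the topology, and the two implications together give that a set is $\mathcal{Q}_{N,p}$-open iff it is $\mathcal{W}_p$-open; hence the metrics are topologically equivalent. I do not expect any real obstacle here — the only thing to be a little careful about is invoking $(iii)\Rightarrow(i)$ at the \emph{same} level $N$ for which we assumed $\mathcal{Q}_{N,p}$-convergence, which is legitimate precisely because $N$ meets the threshold hypothesis of Theorem~\ref{charcvg}$(iii)$; and, strictly speaking, one should state the equivalence in terms of convergent sequences (or open sets), since $\mathcal{Q}_{N,p}$ and $\mathcal{W}_p$ need not be bi-Lipschitz or even uniformly equivalent — only topologically so.
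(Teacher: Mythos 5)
Your proposal is correct and follows exactly the route the paper intends: the paper offers no written proof beyond the remark that the corollary is obtained ``by combining Theorems~\ref{charstat} and~\ref{charcvg}'' together with inequality~\eqref{control}, and your argument is precisely a careful fleshing-out of that combination (separation of points via Theorem~\ref{charstat} with $C=0$, one inclusion of topologies from $\mathcal{Q}_{N,p}\le\mathcal{W}_p$, the other from the implication $(iii)\Rightarrow(i)$ of Theorem~\ref{charcvg} at the same level $N$). Nothing further is needed.
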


\vskip-0.10cm 
\noindent {\em Comments on optimality.} If we consider only the quadratic case $p=2$ and a norm $|\cdot|$ induced by an inner product, the result in Corollary~\ref{cola}-$(a)$ is in fact not optimal. In the next section, 
we will prove that in such a setting, $N_{d, 2, \left|\cdot\right|}=2$ and this result can also be extended to any separable (possibly infinite-dimensional) Hilbert space. 

\section{Quadratic quantization based characterization on a separable Hilbert space: $N_{H, 2}=2$ }\label{euclideancase}

Let $H$ denote a separable Hilbert space with inner product $(\cdot\mid\cdot)_{H}$. Let $|\cdot|_{H}$ denote the norm on $H$ induced by $(\cdot\mid\cdot)_{H}$. When there is no ambiguity, we drop the index $_H$ and write $(\cdot\mid\cdot)$ and $|\cdot|$. The separable Hilbert space is a very common setup for applications, for example in functional data analysis: one can set  $H=L^{2}\big([0, T], dt\big)$ and $X=(X_{t})_{t\in[0, T]}$ a bi-measurable process such that $\int_0^T \mathbb{E} X^2_t\, dt <+\infty$. For more information about functional data analysis with an $L^{2}$-setup, we refer to~\cite{hsing2015theoretical} among others.

We first prove in the quadratic case ($p=2$), that both  static  (see further Proposition~\ref{e22prop}) and  $\mathcal{W}_{2}$-convergence  (see further Theorem~\ref{e22thm}) characterizations can be obtained at level $N=2$ by an analytical method. Then we will show that $N_{H, 2}:= N_{H, 2, \left|\cdot\right|_{H}}=2$ and for any $\mu, \nu\!\in\mathcal{P}_{2}(H)$, $\mathcal{Q}_{2, 2}(\mu, \nu)\coloneqq\left\Vert e_{2, 2}(\mu, \cdot)-e_{2, 2}(\nu, \cdot)\right\Vert_{\sup}$ is a well-defined distance on $\mathcal{P}_{2}(H)$ which is topologically equivalent to $\mathcal{W}_{2}$. 

\smallskip
Proofs of quadratic quantization based characterizations rely on the following lemma.

\begin{lem}\label{31}
$(a)$ Let $\mu, \nu\!\in\mathcal{P}_{2}(H)$. If for every $u\!\in H, \left|u\right|=1$, $\mu\circ\big(\xi\mapsto(\xi\mid u)\big)^{-1}=\nu\circ\big(\xi\mapsto(\xi\mid u)\big)^{-1},$
then $\mu=\nu$. \\
\noindent$(b)$ Let $\mu_{n}\!\in\mathcal{P}_{2}(H)$ for every $n\!\in\mathbb{N}^{*}\cup\{\infty\}$. If $\int_{H}\left|\xi\right|^{2}\mu_{n}(d\xi)\xrightarrow{\;n\rightarrow+\infty\;}\int_{H}\left|\xi\right|^{2}\mu_{\infty}(d\xi)$ and for every $u\!\in H, \,\left|u\right|=1$, $\mu_{n}\circ\big(\xi\mapsto(\xi\mid u)\big)^{-1}\xRightarrow{(\mathbb{R})}\mu_{\infty}\circ\big(\xi\mapsto(\xi\mid u)\big)^{-1}$, then $\mathcal{W}_{2}(\mu_{n}, \mu_{\infty})\rightarrow0$.
\end{lem}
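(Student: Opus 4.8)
\textbf{Plan for the proof of Lemma~\ref{31}.}

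The key observation is that both parts concern one-dimensional projections $\pi_u:\xi\mapsto(\xi\mid u)$ and should reduce to the analogous one-dimensional facts by a Cram\'er--Wold type argument. For part $(a)$, the plan is as follows. Fix any finite collection of unit vectors; more generally, fix arbitrary $t_1,\dots,t_k\in\mathbb{R}$ and $u_1,\dots,u_k\in H$. One wants to show $\mu$ and $\nu$ have the same characteristic functional, i.e. $\int_H e^{i(\xi\mid v)}\mu(d\xi)=\int_H e^{i(\xi\mid v)}\nu(d\xi)$ for every $v\in H$, which characterizes a probability measure on a separable Hilbert space (this is the Hilbert-space analogue of the Cram\'er--Wold device; see e.g. the uniqueness theorem for characteristic functionals). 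But for a single $v\in H$, writing $v=|v|\,u$ with $|u|=1$ (the case $v=0$ being trivial), we have
\[
\int_H e^{i(\xi\mid v)}\mu(d\xi)=\int_{\mathbb{R}} e^{i|v|\,t}\,\big(\mu\circ\pi_u^{-1}\big)(dt),
\]
and the same for $\nu$; since by hypothesis $\mu\circ\pi_u^{-1}=\nu\circ\pi_u^{-1}$, the two characteristic functionals agree at $v$. As $v$ was arbitrary, $\mu=\nu$. (If one prefers to avoid quoting the characteristic-functional uniqueness theorem, one can instead fix an orthonormal basis $(e_j)_{j\ge1}$ of $H$, note that finite linear combinations $\sum_{j\le k}t_j e_j$ are of the form $|v|u$, deduce that all finite-dimensional marginals of $\mu$ and $\nu$ in this basis coincide by the classical Cram\'er--Wold theorem in $\mathbb{R}^k$, and conclude by a $\pi$-$\lambda$ / Dynkin argument that $\mu=\nu$ on the Borel $\sigma$-field generated by the cylinder sets, which is all of $\mathrm{Bor}(H)$ since $H$ is separable.)

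For part $(b)$, the strategy is to invoke Theorem~\ref{was1}$(a)$: it suffices to prove $\mu_n\xRightarrow{(H)}\mu_\infty$ together with convergence of the $p$-th ($=2$nd) moments, and the latter is exactly the first hypothesis with $x_0=0$. For the weak convergence, first establish tightness of $(\mu_n)_{n\ge1}$: the bound $\sup_n\int_H|\xi|^2\mu_n(d\xi)<+\infty$ (from the convergence of moments) plus separability of $H$ does \emph{not} by itself give tightness on an infinite-dimensional space, so one needs a genuine argument here — this is the main obstacle (see below). Granting tightness, let $\widetilde\mu_\infty$ be any weak limit along a subsequence $\mu_{\alpha(n)}$. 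For each fixed unit $u\in H$, the map $\pi_u$ is continuous, so $\mu_{\alpha(n)}\circ\pi_u^{-1}\xRightarrow{(\mathbb{R})}\widetilde\mu_\infty\circ\pi_u^{-1}$ by the continuous mapping theorem; but by hypothesis this sequence also converges weakly to $\mu_\infty\circ\pi_u^{-1}$, and weak limits on $\mathbb{R}$ are unique, so $\widetilde\mu_\infty\circ\pi_u^{-1}=\mu_\infty\circ\pi_u^{-1}$ for every unit $u$. Part $(a)$ then forces $\widetilde\mu_\infty=\mu_\infty$, so $\mu_\infty$ is the unique subsequential weak limit of a tight sequence, whence $\mu_n\xRightarrow{(H)}\mu_\infty$. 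Combined with the moment convergence, Theorem~\ref{was1}$(a)$ yields $\mathcal{W}_2(\mu_n,\mu_\infty)\to0$.

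The delicate point, as flagged, is tightness in $(b)$. A uniform second-moment bound alone is insufficient in infinite dimensions, so I would extract extra information from the hypotheses: for a fixed orthonormal basis $(e_j)_{j\ge1}$, the projections $\mu_n\circ\pi_{e_j}^{-1}$ converge weakly (hence are tight for each $j$), and—more importantly—one should control the tails $\sum_{j>k}(\xi\mid e_j)^2$ uniformly in $n$. Here is where I would use that $\int_H|\xi|^2\mu_n(d\xi)=\sum_{j\ge1}\int_H(\xi\mid e_j)^2\mu_n(d\xi)$ \emph{converges} (not merely stays bounded): convergence of the total sum together with convergence of each term $\int_H(\xi\mid e_j)^2\mu_n(d\xi)\to\int_H(\xi\mid e_j)^2\mu_\infty(d\xi)$ (which follows from $\mu_n\circ\pi_{e_j}^{-1}\xRightarrow{(\mathbb{R})}\mu_\infty\circ\pi_{e_j}^{-1}$ once one upgrades weak convergence to second-moment convergence of the marginals — itself obtainable from the global moment convergence by a Fatou/Scheff\'e-type squeeze across coordinates) gives, by a standard "convergence of sums plus convergence of terms implies uniform smallness of tails" argument (essentially uniform integrability of the measures $j\mapsto\int(\xi\mid e_j)^2\mu_n(d\xi)$), that $\lim_{k\to\infty}\sup_n\sum_{j>k}\int_H(\xi\mid e_j)^2\mu_n(d\xi)=0$. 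Markov's inequality applied coordinatewise then produces, for any $\delta>0$, a compact set (a product of bounded intervals in the first $k$ coordinates intersected with small balls in the tail) carrying mass $\ge1-\delta$ uniformly in $n$, i.e. tightness. I expect the bookkeeping in this tail estimate — in particular justifying the coordinatewise second-moment convergence of the marginals from the hypotheses — to be the only technically substantive part; the rest is the Cram\'er--Wold / continuous-mapping routine described above.
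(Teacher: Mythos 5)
Your proposal is correct, and for part $(a)$ it coincides with the paper's argument: the paper fixes a countable orthonormal basis, truncates to $X^{(m)}=\sum_{k\le m}(X\mid h_k)h_k$, observes that $(\lambda\mid X^{(m)})=\big|\lambda^{(m)}\big|\big(X\mid u^{(m)}\big)$ so that the hypothesis forces the characteristic functions of $X^{(m)}$ and $Y^{(m)}$ to agree, and lets $m\to+\infty$ against bounded continuous test functions — exactly your second (cylinder-set) variant. For part $(b)$ your route is genuinely different. The paper never proves tightness and never extracts subsequences: it shows directly that $\mathbb{E}\,F(X_n)\to\mathbb{E}\,F(X_\infty)$ for every Lipschitz $F$ via the three-term decomposition through the truncations $X_n^{(m)}$, where the middle term vanishes because $X_n^{(m)}\Rightarrow X_\infty^{(m)}$ (a finite-dimensional fact obtained from convergence of characteristic functions), and the two outer terms are bounded by $[F]_{\text{Lip}}\big\Vert X_n-X_n^{(m)}\big\Vert_2$ and $[F]_{\text{Lip}}\big\Vert X_\infty-X_\infty^{(m)}\big\Vert_2$, with the key estimate $\limsup_n\big\Vert X_n-X_n^{(m)}\big\Vert_2^2=\mathbb{E}|X_\infty|^2-\liminf_n\mathbb{E}\big|X_n^{(m)}\big|^2\le\big\Vert X_\infty-X_\infty^{(m)}\big\Vert_2^2$ coming from orthogonality, the second-moment hypothesis, and Fatou's lemma for weak convergence; the right-hand side then tends to $0$ as $m\to+\infty$ by dominated convergence. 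This is precisely the same quantitative mechanism as your ``convergence of the total sum plus coordinatewise lower semicontinuity forces uniform smallness of the tail second moments,'' but packaged so that neither the compactness criterion for subsets of $H$ nor Prokhorov's theorem is needed. Your tightness-plus-identification scheme does go through: the coordinatewise second-moment convergence you flag as the delicate step follows from the squeeze you describe (Fatou gives $\liminf_n\int(\xi\mid e_j)^2\mu_n(d\xi)\ge\int(\xi\mid e_j)^2\mu_\infty(d\xi)$ for each $j$, and convergence of the total sum then forces termwise convergence and uniform tail smallness), and your remark that a uniform second-moment bound alone does not yield tightness in infinite dimension is exactly right — it is the reason the paper's Theorem~\ref{thm:PlCarac} only obtains convergence for the weak topology on $H$. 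The paper's version buys a shorter, extraction-free proof; yours buys an explicit tightness statement at the cost of the compactness bookkeeping you anticipate.
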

\begin{proof}
As $(H, |\cdot|)$ is separable, let $(h_k)_{k\geq1}$ be a countable orthonormal basis of $(H, |\cdot|)$. 

\smallskip
\noindent$(a)$ Let $X, Y$ be random variables with respective distributions $\mu$ and $\nu$ and let $\lambda\!\in H$. We define for every $m\geq1$, $X^{(m)}\coloneqq \sum_{k=1}^{m} (X\,|\, h_{k})h_{k}$, $Y^{(m)}\coloneqq \sum_{k=1}^{m} (Y\,|\,h_{k})h_{k}$ and $\lambda^{(m)}\coloneqq \sum_{k=1}^{m}(\lambda\,|\,h_{k})h_{k}$.  
For  $m\geq1$, let $u^{(m)}=\frac{\lambda^{(m)}}{\left|\lambda^{(m)}\right|}$ (convention $\frac{0}{\left|0\right|}=0$), then we have
\begin{align}
(\lambda\,|\,X^{(m)})&=\sum_{k=1}^{+\infty}(\lambda\, |\, h_{k})(X^{(m)}\, | \,h_{k})=\sum_{k=1}^{m}(\lambda\, |\, h_{k})(X\, | \,h_{k})=\big|\lambda^{(m)}\big|\big(X\,\big{|}\,u^{(m)}\big).\nonumber
\end{align}

Similarly, $\small(\lambda\,|\,Y^{(m)})=\big|\lambda^{(m)}\big|\big(Y\,\big{|}\,u^{(m)}\big)$. Let $i$ be such that $i^{2}=-1$. It follows that \begin{align}
\mathbb{E}\,&e^{i(\lambda\mid X^{(m)})}=\mathbb{E}\,e^{i\left|\lambda^{(m)}\right|(X\mid u^{(m)})}
=\int_{H}e^{i\,|\lambda^{(m)}|\,\xi}\mu\circ\big(\xi\mapsto(u^{(m)}\mid \xi)\big)^{-1}(d\xi)\nonumber\\
&=\int_{H}e^{i\,|\lambda^{(m)}|\,\xi}\nu\circ\big(\xi\mapsto(u^{(m)}\mid \xi)\big)^{-1}(d\xi)=\mathbb{E}\,e^{i(\lambda\mid Y^{(m)})}.\nonumber
\end{align}
Since we can arbitrarily choose $\lambda$, we have for every $m\geq1$, $\mathrm{Law}(X^{(m)})=\mathrm{Law}(Y^{(m)})$.  Let $F: H\rightarrow \mathbb{R}$ be a bounded continuous function. Then, for every $m\geq1$, $\mathbb{E}\,F(X^{(m)})=\mathbb{E}\,F(Y^{(m)})$ which implies $\mathbb{E}\,F(X)=\mathbb{E}\,F(Y)$ by letting $m\rightarrow+\infty$. Hence, $\mu=\nu$. 

\smallskip
\noindent$(b)$ For every $n\geq1$, let $X_{n}$ be random variables with distribution $\mu_{n}$ and let $X_{\infty}$ be a random variable with distribution $\mu_{\infty}$. We define for every $n\geq1$ and for every $m\geq1$, $X_{n}^{(m)}\coloneqq \sum_{i=1}^{m} (X_{n} | h_{i})h_{i}$ and $X_{\infty}^{(m)}\coloneqq \sum_{i=1}^{m} (X_{\infty}| h_{i})h_{i}$. %Hence 
Following the lines of item~$(a)$, we get for every   $m\geq1$, $X_{n}^{(m)}\xRightarrow{(H)} X_{\infty}^{(m)}$ as $n\rightarrow+\infty$, since the convergence of characteristic function implies  weak convergence. 

Now, let $F: H\rightarrow \mathbb{R}$ be a Lipschitz continuous function with Lipschitz coefficient $[F]_{\text{Lip}}\coloneqq\sup_{x,y \in H}\frac{\left|F(x)-F(y)\right|}{\left|x-y\right|}$. For every (temporarily) fixed $m\geq1$,
\begin{align*}
&\lim_{n}\big|\mathbb{E}\,F(X_{n})-\mathbb{E}\,F(X_{\infty})\big|\\ &\hspace{0.3cm}\leq\lim_{n}\big|\mathbb{E}\,F(X_{n})-\mathbb{E}\,F(X_{n}^{(m)})\big|+\lim_{n}\big|\mathbb{E}\,F(X_{n}^{(m)})-\mathbb{E}\,F(X_{\infty}^{(m)})\big|+\big|\mathbb{E}\,F(X_{\infty}^{(m)})-\mathbb{E}\,F(X_{\infty})\big|\\
&\hspace{0.3cm}\leq\lim_{n}\big|\mathbb{E}\,F(X_{n})-\mathbb{E}\,F(X_{n}^{(m)})\big|+0+\big|\mathbb{E}\,F(X_{\infty}^{(m)})-\mathbb{E}\,F(X_{\infty})\big| \;\;\;\text{(since $X_{n}^{(m)}\xRightarrow{(H)} X_{\infty}^{(m)}$)}.
\end{align*}

Then, for every   $n\ge 1$, 
\begin{align}
\big|\mathbb{E}\,F(X_{n})-\mathbb{E}\,F(X_{n}^{(m)})\big|&\leq\mathbb{E}\big| F(X_{n})-F(X_{n}^{(m)})\big|\leq[F]_{\text{Lip}}\mathbb{E}\big|X_{n}-X_{n}^{(m)}\big|\leq[F]_{\text{Lip}}\big\Vert X_{n}-X_{n}^{(m)}\big\Vert_{2}.\nonumber
\end{align}
Similarly, we also have \small$\big|\mathbb{E}\, F(X_{\infty}^{(m)})-\mathbb{E}\,F(X_{\infty})\big|\leq [F]_{\text{Lip}}\big\Vert X_{\infty}-X_{\infty}^{(m)}\big\Vert_{2}$.

\normalsize It follows from Fatou's Lemma for the weak convergence and the convergence assumption made on $\mathbb{E} |X_{n} |^{2}$ that 
\begin{align}
\limsup_{n}&\,\big\Vert X_{n}-X_{n}^{(m)}\big\Vert_{2}^{2}=\limsup_{n}\mathbb{E}\big|X_{n}-X_{n}^{(m)}\big|^{2}=\limsup_{n}\Big[\mathbb{E}\big|X_{n}\big|^{2}-\mathbb{E}\big|X_{n}^{(m)}\big|^{2}\Big]\nonumber\\
&=\mathbb{E}\big|X_{\infty}\big|^{2}-\liminf_{n}\mathbb{E}\big|X_{n}^{(m)}\big|^{2}\leq\mathbb{E}\big|X_{\infty}\big|^{2}-\mathbb{E}\big|X_{\infty}^{(m)}\big|^{2}=\big\Vert X_{\infty}-X_{\infty}^{(m)}\big\Vert^{2}_{2}.
\end{align}
Hence, for every $m\geq1$,
\begin{align}
\lim_{n}&\big|\mathbb{E}\,F(X_{n})-\mathbb{E}\,F(X_{\infty})\big|\leq\limsup_{n}\,[F]_{\text{Lip}}\big\Vert X_{n}-X_{n}^{(m)}\big\Vert_{2}+[F]_{\text{Lip}}\big\Vert X_{\infty}-X_{\infty}^{(m)}\big\Vert_{2}\nonumber\\
&\leq2[F]_{\text{Lip}}\big\Vert X_{\infty}-X_{\infty}^{(m)}\big\Vert_{2}.\nonumber
\end{align}
Then, $\big\Vert X_{\infty}-X_{\infty}^{(m)}\big\Vert_{2}\rightarrow0$ as $m\rightarrow+\infty$ by the Lebesgue dominated convergence theorem since $\big|X_{\infty}-X_{\infty}^{(m)}\big|\leq\big|X_{\infty}\big|\!\in L^{2}(\mathbb{P})$ so that $\mathbb{E}\,F(X_{n})\rightarrow \mathbb{E}\,F(X_{\infty})$ as $n\rightarrow+\infty$.
Thus, $X_{n}\xRightarrow{\,(H)\,}X_{\infty}$ and we can conclude that $\mathcal{W}_{p}(\mu_{n}, \mu_{\infty})\rightarrow0$ by applying Theorem~\ref{was1}.
\end{proof}

\begin{prop}[Static characterization]\label{e22prop}
Let $\mu,\,\nu\!\in \mathcal{P}_{2}(H)$. If $e_{2,2}(\mu,\cdot)=e_{2,2}(\nu,\cdot)+C$ for some real constant $C$, then $\mu=\nu$ and $C=0$.
\end{prop}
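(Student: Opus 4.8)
The plan is to turn the level-$2$ quadratic quantization error into an exact algebraic expression involving only the one-dimensional projections of $\mu$, and then to invoke Lemma~\ref{31}$(a)$. Using the elementary identity $\min(a,b)=\tfrac12(a+b)-\tfrac12|a-b|$ together with the crucial observation that, for fixed $x_1,x_2\in H$, the map $\xi\mapsto|\xi-x_1|^2-|\xi-x_2|^2=2(\xi\mid x_2-x_1)+|x_1|^2-|x_2|^2$ is \emph{affine} in $\xi$, I would first establish that for $x_1\neq x_2$,
\[
e_{2,2}^2(\mu,(x_1,x_2))=M_\mu-(m_\mu\mid x_1+x_2)+\tfrac12\big(|x_1|^2+|x_2|^2\big)-|x_1-x_2|\,k_{\mu_u}(s),
\]
where $M_\mu\coloneqq\int_H|\xi|^2\mu(d\xi)$, $m_\mu\coloneqq\int_H\xi\,\mu(d\xi)\in H$ (a Bochner integral, well defined since $\mathcal P_2(H)\subset\mathcal P_1(H)$), $u\coloneqq\frac{x_2-x_1}{|x_2-x_1|}$, $s\coloneqq\frac{|x_2|^2-|x_1|^2}{2|x_1-x_2|}$, $\mu_u\coloneqq\mu\circ(\xi\mapsto(\xi\mid u))^{-1}\in\mathcal P_1(\mathbb R)$, and $k_\rho(s)\coloneqq\int_{\mathbb R}|y-s|\,\rho(dy)$. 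For $x_1=x_2=x$ the last term drops out and the formula reduces to $e_{2,2}^2(\mu,(x,x))=\int_H|\xi-x|^2\mu(d\xi)=|x-m_\mu|^2+v_\mu$ with $v_\mu\coloneqq M_\mu-|m_\mu|^2\ge0$.

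Next I would dispose of the constant $C$ and of the low-order moments using only the diagonal points. There the hypothesis reads $\sqrt{|x-m_\mu|^2+v_\mu}=\sqrt{|x-m_\nu|^2+v_\nu}+C$ for all $x\in H$. Fixing a unit vector $u$, taking $x=m_\mu+tu$ with $t\in\mathbb R$, squaring, and comparing the resulting identity at $t$ and $-t$ annihilates the square-root term and forces $(m_\mu-m_\nu\mid u)=0$; as $u$ is arbitrary, $m_\mu=m_\nu$. The identity then becomes $v_\mu+C^2-c=2C\sqrt{t^2+v_\mu}$ for all $t$ (with $c=|m_\mu-m_\nu|^2+v_\nu$), so the left-hand side being constant in $t$ forces $C=0$, and then $v_\mu=c=v_\nu$, whence $M_\mu=M_\nu$.

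Plugging $C=0$, $m_\mu=m_\nu$ and $M_\mu=M_\nu$ back into the formula above, every term except the last cancels, so $e_{2,2}^2(\mu,(x_1,x_2))-e_{2,2}^2(\nu,(x_1,x_2))=0$ yields $k_{\mu_u}(s)=k_{\nu_u}(s)$ for all $x_1\neq x_2$. For a prescribed unit vector $u\in H$ and a prescribed $s\in\mathbb R$, the choice $x_1=(s-\tfrac12)u$, $x_2=(s+\tfrac12)u$ realizes exactly this direction $u$, with $|x_1-x_2|=1$ and parameter value $s$; hence $k_{\mu_u}(s)=k_{\nu_u}(s)$ for every $s\in\mathbb R$ and every unit $u\in H$. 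Since for $\rho\in\mathcal P_1(\mathbb R)$ the map $s\mapsto k_\rho(s)$ is convex with right-derivative $2F_\rho(s)-1$, where $F_\rho$ denotes the cumulative distribution function of $\rho$, the function $k_\rho$ determines $F_\rho$ and hence $\rho$; therefore $\mu_u=\nu_u$ for every unit $u\in H$, and Lemma~\ref{31}$(a)$ gives $\mu=\nu$ (with $C=0$ by the previous step).

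The only genuinely delicate point is the middle step: the unknown constant $C$ really does couple with the first and second moments of $\mu$ and $\nu$ on the diagonal, and one must extract all three facts ($C=0$, $m_\mu=m_\nu$, $M_\mu=M_\nu$) before the off-diagonal comparison collapses to a purely one-dimensional statement. The algebraic identity, the explicit choice of test pairs realizing an arbitrary pair $(u,s)$, and the inversion $k_\rho\mapsto\rho$ are all routine.
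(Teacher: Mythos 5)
Your proof is correct, and while it ends at the same place as the paper's (equality of all one--dimensional projections $\mu\circ(\xi\mapsto(\xi\mid u))^{-1}$, then Lemma~\ref{31}$(a)$), the reduction mechanism is genuinely different and worth comparing. The paper writes $x\wedge y=x-(x-y)_+$, which makes the constant cancel instantly and yields $\int_H(|\xi-a|^2-|\xi-b|^2)_+\,d\mu=\int_H(\cdot)_+\,d\nu$; it then restricts $a,b$ to a line $\mathbb{R}u$ and passes to the limit $\lambda'\to\lambda$ to isolate $\int_H((\xi\mid u)-\lambda)_+\,d\mu$, whose right $\lambda$-derivative is (minus) the survival function of the projection. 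You instead use $x\wedge y=\tfrac12(x+y)-\tfrac12|x-y|$ together with the affineness of $\xi\mapsto|\xi-x_1|^2-|\xi-x_2|^2$, which gives a closed-form decomposition of $e_{2,2}^2$ into moment terms plus $|x_1-x_2|\,k_{\mu_u}(s)$; no limiting procedure is needed for the reduction, and the inversion $k_\rho\mapsto\rho$ is exactly the device of Proposition~\ref{e11d1}$(a)$ applied to the projected law. Two remarks. First, your treatment of the constant is actually more faithful to the statement as written: the proposition adds $C$ to $e_{2,2}$, whereas the paper's proof (and its later invocations in Theorems~\ref{charstat} and~\ref{thm:PlCarac}) tacitly works with $e_{2,2}^2(\mu,\cdot)=e_{2,2}^2(\nu,\cdot)+C$, where $C$ cancels for free; your asymptotic argument on the diagonal is the price of handling the unsquared version, and it does deliver $C=0$, $m_\mu=m_\nu$, $M_\mu=M_\nu$. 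Second, the middle step is stated a little loosely: merely "comparing at $t$ and $-t$" does not annihilate the square root for finite $t$ — you need to let $t\to\pm\infty$ (the difference of the two identities is linear in $t$ on one side and bounded on the other, forcing $(m_\mu-m_\nu\mid u)=0$, after which constancy in $t$ forces $C=0$), and there are a couple of harmless sign/subscript slips in the displayed identity ($v_\mu-C^2-c=2C\sqrt{t^2+v_\nu}$ is the correct form). These are routine repairs; the proof stands.
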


\begin{proof}
Let $a,b\!\in H$, then $e_{2,2}^{2}\big(\mu,(a,b)\big)=\int_{H}\left|\xi-a\right|^{2}\wedge\left|\xi-b\right|^{2}\mu(d\xi)$.

As $e_{2,2}^{2}\big(\mu,(a,b)\big)=e_{2,2}^{2}\big(\nu,(a,b)\big)+C$ for every $(a,b)\!\in H^{2}$, in particular, if $a=b$, $\int_{H}\left|\xi-a\right|^{2}\mu(d\xi)=\int_{H}\left|\xi-a\right|^{2}\nu(d\xi)+C$.
Hence, using that $(x-y)_{+}=x-x\wedge y$, we have
\begin{equation}\label{plus}
\forall a,b\!\in H,\;\;\int_{H}\big(\left|\xi-a\right|^{2}-\left|\xi-b\right|^{2}\big)_{+}\mu(d\xi)=\int_{H}\big(\left|\xi-a\right|^{2}-\left|\xi-b\right|^{2}\big)_{+}\nu(d\xi).
\end{equation}
Note that $\left|\xi-a\right|^{2}-\left|\xi-b\right|^{2}=2\Big(b-a \;\Big| \;\xi-\frac{a+b}{2}\Big)$. Hence, if we take $a=\lambda u$ and $b=\lambda' u$ with $\lambda,\lambda'\!\in\mathbb{R},\;\lambda'>\lambda$ for some common $u\!\in H$ with $\left|u\right|=1$, we obtain
\[\big(\left|\xi-a\right|^{2}-\left|\xi-b\right|^{2}\big)_{+}=2(\lambda'-\lambda)\left((\xi\mid u)-\frac{\lambda+\lambda'}{2}\right)_{+}.\]

As a consequence of (\ref{plus}), we derive that
\begin{align}
\forall \lambda,\lambda'\!\in\mathbb{R},\lambda'>\lambda,\;\;\;\;\int_{H}\left((\xi\mid u)-\frac{\lambda+\lambda'}{2}\right)_{+}\!\!\!\mu(d\xi)=\int_{H}\left((\xi\mid u)-\frac{\lambda+\lambda'}{2}\right)_{+}\!\!\!\nu(d\xi).\nonumber
\end{align}

In turn, this implies, by letting $\lambda'\rightarrow\lambda$,
\begin{align}\label{aallpp}
\forall u\!\in H, \left|u\right|=1,\;\forall\lambda\!\in\mathbb{R},\qquad \int_{H}\Big((\xi\mid u)-\lambda\Big)_{+}\mu(d\xi)=\int_{H}\Big((\xi\mid u)-\lambda\Big)_{+}\nu(d\xi).
\end{align}

The function $\lambda\mapsto\big((\xi\mid u)-\lambda\big)_{+}$ is right differentiable with $\mathbbm{1}_{(\xi\mid u)>\lambda}$ as a right derivative and $\mu$-integrable. Hence, by the Lebesgue differentiation theorem, we can right differentiate the equality ($\ref{aallpp}$) which yields for every $u\!\in H, \left|u\right|=1$ and for every $\lambda\!\in\mathbb{R}$, $\mu\big((\xi\mid u)> \lambda\big)=\nu\big((\xi\mid u)> \lambda\big)$.

Hence, for every $u\!\in H, \left|u\right|=1$, $\mu\circ\big(\xi\mapsto(\xi\mid u)\big)^{-1}\!=\!\nu\circ\big(\xi\mapsto(\xi\mid u)\big)^{-1}$ since they have the same survival function.
We conclude by  Lemma~\ref{31}~$(a)$ that $\mu=\nu$~and~$C=0$.\end{proof}

\vskip -0.2cm
The following theorem shows the equivalence of $\mathcal{W}_{2}$-convergence of $(\mu_{n})_{n\geq1}$ in $\mathcal{P}_{2}(H)$ and the pointwise convergence of quadratic quantization error function $\big(e_{2,2}(\mu_{n}, \cdot)\big)_{n\geq1}$.

\begin{thm}[$\mathcal{W}_{2}$-convergence characterization]\label{e22thm}
Let $\mu_{n}\!\in\mathcal{P}_{2}(H)$ for every $n\!\in\mathbb{N}^{*}\cup\{\infty\}$.  The following properties are equivalent:
\begin{enumerate}[$(i)$]
\item $\mathcal{W}_{2}(\mu_{n},\mu_{\infty})\xrightarrow{\;n\rightarrow+\infty\;}0$,
\item $e_{2,2}(\mu_{n},\cdot)\xrightarrow{\;n\rightarrow+\infty\;}e_{2,2}(\mu_{\infty},\cdot)$ uniformly,
\item $e_{2,2}(\mu_{n},\cdot)\xrightarrow{\;n\rightarrow+\infty\;}e_{2,2}(\mu_{\infty},\cdot)$ pointwise.
\end{enumerate}
\end{thm}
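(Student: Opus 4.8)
The implications $(i)\Rightarrow(ii)\Rightarrow(iii)$ are immediate: $(ii)$ follows from the uniform bound \eqref{06} specialized to $p=2$ on $H$, and $(iii)$ is a trivial weakening of $(ii)$. So the whole content is $(iii)\Rightarrow(i)$, and the plan is to mimic the structure of the proof of Theorem~\ref{charcvg}, but feeding the output into Lemma~\ref{31}~$(b)$ rather than into the finite-dimensional argument. First I would extract convergence of second moments: evaluating the pointwise hypothesis at $x=(0,0)$ gives $e_{2,2}^{2}(\mu_n,(0,0))=\int_H|\xi|^2\mu_n(d\xi)\to\int_H|\xi|^2\mu_\infty(d\xi)<+\infty$, which in particular bounds $\sup_n\int_H|\xi|^2\mu_n(d\xi)$.

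The next step is to transfer the pointwise convergence into information about the one-dimensional pushforwards $\mu_n\circ(\xi\mapsto(\xi\mid u))^{-1}$ for each unit vector $u$. Fix $u\in H$ with $|u|=1$ and $\lambda\in\mathbb{R}$; taking $a=\lambda u$, $b=\lambda' u$ with $\lambda'>\lambda$ in the identity $\big(|\xi-a|^2-|\xi-b|^2\big)_+=2(\lambda'-\lambda)\big((\xi\mid u)-\tfrac{\lambda+\lambda'}{2}\big)_+$ (exactly as in the proof of Proposition~\ref{e22prop}), one writes $\int_H\big((\xi\mid u)-\tfrac{\lambda+\lambda'}{2}\big)_+\mu_n(d\xi)$ as an explicit combination of $e_{2,2}^2(\mu_n,(a,b))$ and $e_{2,2}^2(\mu_n,(0,0))$. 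The hypothesis $(iii)$ therefore gives, for every $u$ and every pair $\lambda<\lambda'$,
\[
\int_H\Big((\xi\mid u)-\tfrac{\lambda+\lambda'}{2}\Big)_+\mu_n(d\xi)\xrightarrow{n\to\infty}\int_H\Big((\xi\mid u)-\tfrac{\lambda+\lambda'}{2}\Big)_+\mu_\infty(d\xi),
\]
i.e. $\int_{\mathbb{R}}(t-c)_+\,\rho_n^u(dt)\to\int_{\mathbb{R}}(t-c)_+\,\rho_\infty^u(dt)$ for all $c\in\mathbb{R}$, where $\rho_n^u\coloneqq\mu_n\circ(\xi\mapsto(\xi\mid u))^{-1}$. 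Convergence of the call functions $c\mapsto\int(t-c)_+\rho(dt)$ for all $c$ is classically equivalent to weak convergence $\rho_n^u\xRightarrow{(\mathbb{R})}\rho_\infty^u$ together with convergence of first moments (one recovers the survival function $\rho^u\big((c,\infty)\big)$ by right-differentiation as in the proof of Proposition~\ref{e22prop}, after checking the families are tight via the uniform second-moment bound, which controls $\sup_n\int|t|\,\rho_n^u(dt)$ and $\sup_n\int t^2\,\rho_n^u(dt)$); alternatively convergence of the survival functions at all continuity points of $\rho_\infty^u$ gives weak convergence directly. Either way one concludes $\mu_n\circ(\xi\mapsto(\xi\mid u))^{-1}\xRightarrow{(\mathbb{R})}\mu_\infty\circ(\xi\mapsto(\xi\mid u))^{-1}$ for every unit $u$.

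Having both hypotheses of Lemma~\ref{31}~$(b)$ — convergence of $\int_H|\xi|^2\mu_n(d\xi)$ from the first step and weak convergence of all one-dimensional projections from the second — I would invoke Lemma~\ref{31}~$(b)$ to conclude $\mathcal{W}_2(\mu_n,\mu_\infty)\to0$, which is $(i)$. \textbf{The main obstacle} is the middle step: passing from ``convergence of the integrals of $(t-c)_+$ for all $c$'' to genuine weak convergence of the scalar laws $\rho_n^u$. This needs two ingredients handled carefully — first, tightness of $(\rho_n^u)_n$, which comes from the uniform second-moment estimate (so that one may extract weakly convergent subsequences), and second, an identification argument showing every subsequential weak limit equals $\rho_\infty^u$, obtained by right-differentiating the limiting call-function identity to recover $\rho^u\big((c,\infty)\big)$ at continuity points, exactly as in Proposition~\ref{e22prop}. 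Once the scalar statement is nailed down, the rest is a direct citation of the two lemmas already proved.
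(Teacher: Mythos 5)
Your proposal is correct and follows essentially the same route as the paper: convergence of second moments at $(0,0)$, reduction via the identity $\big(|\xi-a|^{2}-|\xi-b|^{2}\big)_{+}=2(\lambda'-\lambda)\big((\xi\mid u)-\tfrac{\lambda+\lambda'}{2}\big)_{+}$ to convergence of the call functions of the one-dimensional projections, recovery of the survival functions by right-differentiation (the paper formalizes this via Lemma~\ref{convex} on convergence of one-sided derivatives of convex functions, which is exactly the tool your ``main obstacle'' paragraph calls for), and then Lemma~\ref{31}~$(b)$. No gap.
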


Before proving Theorem~\ref{e22thm}, we recall the convergence of left and right derivatives of a converging sequence of convex functions. Let $\partial_{-}f$ (respectively $\partial_{+}f$) denote the left derivative ({resp.} {right derivative}) of a convex function $f$.

\begin{lem}\label{convex}(See e.g.~\cite{lackovic1982behaviour}[Theorems 2.5])
Let $f_{n} : \mathbb{R}^{d}\rightarrow\mathbb{R}^{d},n\!\in\mathbb{N}^{*},$ be a sequence of convex functions converging pointwise to a function $f : \mathbb{R}^{d}\rightarrow\mathbb{R}^{d}$. Let $G\coloneqq\{x\!\in\mathbb{R}\,|\,\partial_{-}f(x)\neq\partial_{+}f(x)\}$. Then for every point $x\!\in\mathbb{R}\setminus G$,  
\[\lim_{n}\partial_{+}f_{n}(x)=\lim_{n}\partial_{-}f_{n}(x)=f'(x).\]
\end{lem}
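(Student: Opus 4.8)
The plan is to recognize that, despite the ``$\mathbb{R}^d\to\mathbb{R}^d$'' typo in the statement, the hypotheses involving one–sided derivatives, the set $G\subset\mathbb{R}$, and the ordinary derivative $f'(x)$ all refer to convex functions of one real variable, $f_n:\mathbb{R}\to\mathbb{R}$; this is also the only case used in Theorem~\ref{e22thm}. So I would prove the one–dimensional statement, and the whole argument rests on the monotonicity of difference quotients of convex functions together with a two–sided squeeze.

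First I would record the standard preliminaries. A pointwise limit of convex functions is convex, so $f$ is convex; hence $f$ has finite left and right derivatives everywhere, with $\partial_-f\le\partial_+f$, both nondecreasing, and $G=\{x:\partial_-f(x)\ne\partial_+f(x)\}$ is at most countable. Fix $x\in\mathbb{R}\setminus G$: then $f$ is differentiable at $x$, and in particular
\[
\lim_{t\downarrow 0}\frac{f(x+t)-f(x)}{t}=\lim_{t\uparrow 0}\frac{f(x+t)-f(x)}{t}=f'(x).
\]
I would also recall the classical chain of inequalities valid for any convex $g:\mathbb{R}\to\mathbb{R}$ and any $h<0<k$:
\[
\frac{g(x+h)-g(x)}{h}\ \le\ \partial_-g(x)\ \le\ \partial_+g(x)\ \le\ \frac{g(x+k)-g(x)}{k}.
\]

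Then comes the core step: the squeeze. Given $\varepsilon>0$, the differentiability of $f$ at $x$ lets me choose $k>0$ and $h<0$ with $\tfrac{f(x+k)-f(x)}{k}<f'(x)+\varepsilon$ and $\tfrac{f(x+h)-f(x)}{h}>f'(x)-\varepsilon$. Pointwise convergence of $f_n$ at the three fixed points $x$, $x+h$, $x+k$ then yields $n_0$ such that, for all $n\ge n_0$,
\[
\frac{f_n(x+h)-f_n(x)}{h}>f'(x)-\varepsilon
\qquad\text{and}\qquad
\frac{f_n(x+k)-f_n(x)}{k}<f'(x)+\varepsilon .
\]
Applying the chain of inequalities with $g=f_n$ sandwiches both $\partial_-f_n(x)$ and $\partial_+f_n(x)$ strictly between $f'(x)-\varepsilon$ and $f'(x)+\varepsilon$ for all $n\ge n_0$. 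Letting $\varepsilon\downarrow 0$ gives $\lim_n\partial_-f_n(x)=\lim_n\partial_+f_n(x)=f'(x)$, as claimed.

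I do not expect a genuine obstacle here; the argument needs no compactness or equicontinuity, only pointwise convergence at three well-chosen points for each $\varepsilon$. The only points deserving a sentence of care are structural convexity facts: that the one–sided derivatives of a convex function on $\mathbb{R}$ are finite everywhere (immediate from monotonicity of difference quotients), that a pointwise limit of convex functions is convex, and that such a limit is therefore differentiable off the at most countable set $G$ — which is exactly what makes $\mathbb{R}\setminus G$ the natural domain for the conclusion. Since the result is quoted from~\cite{lackovic1982behaviour}, in the paper itself I would simply cite it and omit this proof.
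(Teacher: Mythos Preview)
Your proof is correct, and you anticipated the situation exactly: the paper does not prove this lemma at all but simply states it with a citation to~\cite{lackovic1982behaviour}. Your one-dimensional reading of the statement is right (the $\mathbb{R}^d\to\mathbb{R}^d$ is a typo), and your squeeze argument via monotonicity of difference quotients is the standard elementary proof.
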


\begin{proof}[Proof of Theorem~\ref{e22thm}]

\noindent$(i)\Rightarrow(ii)$$\;\;\;$ is obvious from $(\ref{06})$.

\noindent$(ii)\Rightarrow(iii)$$\;\;$ is obvious.

\noindent$(iii)\Rightarrow(i)$$\;\;\;$ 
For every $(a,b)\!\in H^{2}$,
\begin{equation}
e_{2,2}^{2}\big(\mu_{n},(a,b)\big)=\int_{H}\left|\xi-a\right|^{2}\wedge\left|\xi-b\right|^{2}\mu_{n}(d\xi)\xrightarrow{n\rightarrow+\infty}e_{2,2}^{2}\big(\mu_{\infty},(a,b)\big)\!=\!\int_{H}\left|\xi-a\right|^{2}\wedge\left|\xi-b\right|^{2}\!\!\mu_{\infty}(d\xi).\nonumber
\end{equation}
In particular, $\forall a\!\in H,\;\int_{H}\left|\xi-a\right|^{2}\mu_{n}(d\xi)\xrightarrow{n\rightarrow+\infty}\int_{H}\left|\xi-a\right|^{2}\mu_{\infty}(d\xi)$.
Hence, using that $(x-y)_{+}=x-x\wedge y$, we get 
\[\forall a,b\!\in H,\;\;\int_{H}\big(\left|\xi-a\right|^{2}-\left|\xi-b\right|^{2}\big)_{+}\mu_{n}(d\xi)\xrightarrow{n\rightarrow+\infty}\int_{H}\big(\left|\xi-a\right|^{2}-\left|\xi-b\right|^{2}\big)_{+}\mu_{\infty}(d\xi).\]

Following the lines of the proof of Proposition~\ref{e22prop}, we get
\begin{equation}\label{cvgsimple}
\forall \lambda\!\in\mathbb{R},\, \forall u\!\in H, \left|u\right|=1,\;\int_{H}\Big((\xi\mid u)-\lambda\Big)_{+}\mu_{n}(d\xi)\xrightarrow{n\rightarrow+\infty}\int_{H}\Big((\xi\mid u)-\lambda\Big)_{+}\mu_{\infty}(d\xi).
\end{equation}

For $\mu\!\in\mathcal{P}_{2}(H)$ and $u\!\in S_{|\cdot|}(0,1)$, we define the real-valued convex function $\phi_{\mu}$ by $\phi_{\mu}\;:\;\;\lambda\mapsto\int\big((\xi\mid u)-\lambda\big)_{+}\mu(d\xi)$. It follows from (\ref{cvgsimple}) that $(\phi_{\mu_{n}})_{n\geq0}$ converges pointwise to $\phi_{\mu_{\infty}}$. Moreover, 
$\phi_{\mu_{n}}$, $\phi_{\mu_{\infty}}$ are right-differentiable and their right derivatives are given by $\partial_{+}\phi_{\mu_{n}}(\lambda)=\mu_{n}\big((\xi\mid u)>\lambda\big)$ and $\partial_{+}\phi_{\mu_{\infty}}(\lambda)=\mu_{\infty}\big((\xi\mid u)>\lambda\big)$ respectively. Note that the functions $1-\partial_{+}\phi_{\mu_{n}}$ and $1-\partial_{+}\phi_{\mu_{\infty}}$ are the cumulative distribution functions of the probability distributions $\mu_{n}\circ\big(\xi\mapsto(\xi\mid u)\big)^{-1}$ and $\mu_{\infty}\circ\big(\xi\mapsto(\xi\mid u)\big)^{-1}$ and that the set of discontinuity points of $1-\partial_{+}\phi_{\mu_{\infty}}$ and $\partial_{+}\phi_{\mu_{\infty}}$, is $G=\{\lambda\,:\,\mu_{\infty}\big(\{\xi:(\xi\mid u)=\lambda\}\big)>0\}$. 

We know from Lemma~\ref{convex} that for every $\lambda\!\in\mathbb{R}\setminus G$, $\partial_{+}\phi_{\mu_{n}}(\lambda)\xrightarrow{n\rightarrow+\infty}\partial_{+}\phi_{\mu_{\infty}}(\lambda)$ and that $\partial_{-}\phi_{\mu_{\infty}}$ is continuous on $\mathbb{R}\setminus G$. Hence  
\begin{equation}\label{probarprodu}
\forall u\!\in H, \left|u\right|=1,\;\;\;\;\;\mu_{n}\circ\big(\xi\mapsto(\xi\mid u)\big)^{-1}\xRightarrow{\;(\mathbb{R})\;}\mu_{\infty}\circ\big(\xi\mapsto(\xi\mid u)\big)^{-1}.
\end{equation}
Moreover, $e_{2,2}\big(\mu_{n},(0,0)\big)$ converges to $e_{2,2}\big(\mu_{\infty},(0,0)\big)$, which also reads $\int_{H}\left|\xi\right|^{2}\mu_{n}(d\xi)\rightarrow\int_{H}\left|\xi\right|^{2}\mu_{\infty}(d\xi)$.
Consequently, it follows from Lemma~\ref{31}-$(b)$ that $\mathcal{W}_{2}(\mu_{n},\mu_{\infty})\rightarrow0$ as $n\rightarrow+\infty$.
\end{proof}

\begin{rem} Proposition~\ref{e22prop} and Theorem~\ref{e22thm} directly imply that $N_{H, 2, \left|\cdot\right|_{2}}\leq2$. In fact, for every $a\!\in H$, 
\begin{align}
e_{1, 2}(\mu, a)=\int_{H}\left|\xi-a\right|_{H}^{2}\mu(d\xi)=\int_{H}\left|\xi\right|_{H}^{2}\mu(d\xi)-2\Big(\int_{H}\xi\mu(d\xi)\,\big|\,a\Big)_{H}+\left|a\right|_{H}^{2}.\nonumber
\end{align}

Thus, if $\mu, \nu\!\in\mathcal{P}_{2}(H)$ are such that 
\begin{align}\label{condmoment12}
\int_{H}\left|\xi\right|_{H}^{2}\mu(d\xi)=\int_{H}\left|\xi\right|_{H}^{2}\nu(d\xi)\;\;\text{and}\;\;\int_{H}\xi\mu(d\xi)=\int_{H}\xi\nu(d\xi),
\end{align}

then we have $e_{1, 2}(\mu, \cdot)=e_{1, 2}(\nu, \cdot)$. But condition~\eqref{condmoment12} is clearly not sufficient  to have $\mu=\nu$. Consequently, $N_{H, 2, \left|\cdot\right|_{2}}=2$. 
\end{rem}

Like what we did in Section~\ref{22d}, we define a function $\mathcal{Q}^{H}_{2, 2}$ on $\big(\mathcal{P}_{2}(H)\big)^{2}$ by $(\mu, \nu)\mapsto\mathcal{Q}^{H}_{2, 2}(\mu, \nu)=\left\Vert e_{2,2}(\mu, \cdot)-e_{2,2}(\nu, \cdot)\right\Vert_{\sup}.$
Then inequality $(\ref{control})$ implies that $\mathcal{Q}^{H}_{2, 2}(\mu, \nu)\in[0, +\infty)$. Moreover, Proposition~\ref{e22prop} and Theorem~\ref{e22thm} lead the following corollary.
\begin{cor}\label{cor:equidist} The distances 
$\mathcal{Q}^{H}_{2, 2}$ and  $\mathcal{W}_{2}$  are  topologically equivalent on $\mathcal{P}_{2}(H)$.
\end{cor}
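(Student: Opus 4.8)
The goal is to establish that $\mathcal{Q}^{H}_{2,2}$ is a distance on $\mathcal{P}_2(H)$ and that it is topologically equivalent to $\mathcal{W}_2$. The plan is to verify the distance axioms first, then prove the topological equivalence by a two-sided argument using the inequality \eqref{control} together with Proposition~\ref{e22prop} and Theorem~\ref{e22thm}.

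First I would check that $\mathcal{Q}^{H}_{2,2}$ is a genuine distance on $\mathcal{P}_2(H)$. Symmetry and the triangle inequality are immediate from the definition as a sup norm of a difference. Non-negativity is clear, and finiteness follows from inequality~\eqref{control}, which gives $\mathcal{Q}^{H}_{2,2}(\mu,\nu)\leq\mathcal{W}_2(\mu,\nu)<+\infty$. The only point requiring the earlier work is the separation axiom: if $\mathcal{Q}^{H}_{2,2}(\mu,\nu)=0$ then $e_{2,2}(\mu,\cdot)=e_{2,2}(\nu,\cdot)$, i.e.\ the hypothesis of Proposition~\ref{e22prop} holds with $C=0$, and therefore $\mu=\nu$. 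Conversely $\mu=\nu$ trivially gives $\mathcal{Q}^{H}_{2,2}(\mu,\nu)=0$.

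Next I would establish topological equivalence, meaning that a sequence $(\mu_n)_{n\geq1}$ in $\mathcal{P}_2(H)$ converges to $\mu_\infty$ for $\mathcal{Q}^{H}_{2,2}$ if and only if it converges to $\mu_\infty$ for $\mathcal{W}_2$. One direction is a direct consequence of~\eqref{control}: since $\mathcal{Q}^{H}_{2,2}(\mu_n,\mu_\infty)\leq\mathcal{W}_2(\mu_n,\mu_\infty)$, convergence in $\mathcal{W}_2$ implies convergence in $\mathcal{Q}^{H}_{2,2}$. For the reverse direction, note that $\mathcal{Q}^{H}_{2,2}(\mu_n,\mu_\infty)\to0$ means $e_{2,2}(\mu_n,\cdot)\to e_{2,2}(\mu_\infty,\cdot)$ uniformly, in particular pointwise, which is exactly condition~$(iii)$ of Theorem~\ref{e22thm}; hence $\mathcal{W}_2(\mu_n,\mu_\infty)\to0$ by the equivalence $(iii)\Rightarrow(i)$ proved there. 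Since both metrics are defined on the same set and induce the same convergent sequences (and $\big(\mathcal{P}_2(H),\mathcal{W}_2\big)$ is metrizable, so sequential convergence characterizes the topology), the two topologies coincide.

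There is essentially no hard obstacle here: the corollary is a packaging of Proposition~\ref{e22prop} (for the separation axiom) and Theorem~\ref{e22thm} (for the topological equivalence), with inequality~\eqref{control} providing finiteness and the easy direction of the equivalence. The only mild subtlety worth a sentence is that ``topologically equivalent'' for metrics on a Polish-type space is correctly captured by the coincidence of convergent sequences, which is why invoking Theorem~\ref{e22thm}'s equivalence $(i)\Leftrightarrow(iii)$ suffices rather than needing an explicit two-sided metric comparison.
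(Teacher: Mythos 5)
Your proposal is correct and follows exactly the route the paper intends: the corollary is stated without proof as an immediate consequence of inequality~\eqref{control} (finiteness and the easy direction), Proposition~\ref{e22prop} (separation axiom), and Theorem~\ref{e22thm} (the reverse implication via pointwise convergence). Nothing is missing.
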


We conclude this section by an ``\`A la Paul L\'evy'' characterization of a limit of quantization errors functions.
\normalsize
\begin{thm}[\`A la Paul L\'evy characterization]\label{thm:PlCarac} Let $(H,|\cdot|_H)$ be a separable Hilbert space. Let $(\mu_n)_{n\ge 1}$ be a $\mathcal{P}_2(H)$-valued sequence and let $f: H\to \mathbb{R}_+$ be such that  
\[
e_{2,2}(\mu_n,\cdot) \xrightarrow{n\rightarrow+\infty} f \; \mbox{ pointwise}.
\]
Then there exists $\mu_{\infty}\!\in \mathcal{P}_2(H)$ such that $\mu_n \stackrel{(H_{w})}{\Longrightarrow}\mu_{\infty}$  (where $(H_{w})$ stands for the   weak topology on $H$) and
\[
f^2 = e_{2,2}(\mu_n ,\cdot)^2 + \lim_n  \int_H |\xi|^2 \, \mu_{n}(d\xi)  - \int_H |\xi|^2 \, \mu_{\infty}(d\xi). 
\]
\end{thm}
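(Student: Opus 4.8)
The plan is to mimic the classical Paul L\'evy continuity theorem, replacing characteristic functions by the convex ``slicing functions'' $\phi_{\mu_n}(\lambda)=\int_H\big((\xi\mid u)-\lambda\big)_+\mu_n(d\xi)$ that already appear in the proof of Theorem~\ref{e22thm}. First I would observe that, exactly as in the implication $(iii)\Rightarrow(i)$ there, pointwise convergence of $e_{2,2}(\mu_n,\cdot)$ to $f$ forces: (a) convergence of the second moments, since $e_{2,2}^2(\mu_n,(0,0))=\int_H|\xi|^2\mu_n(d\xi)\to f^2(0,0)=:m$, which is in particular finite, so the $\mathbb{E}|X_n|^2$ are bounded; and (b) for every $a,b\in H$, convergence of $\int_H(|\xi-a|^2-|\xi-b|^2)_+\mu_n(d\xi)$, hence after the same $a=\lambda u$, $b=\lambda'u$ reduction and passage to the limit $\lambda'\to\lambda$, convergence of $\phi_{\mu_n}(\lambda)$ to some limit $\phi_\infty(\lambda)$ for every unit vector $u$ and every $\lambda$. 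The limit $\phi_\infty$ is convex (pointwise limit of convex functions), nonincreasing, nonnegative, and by the moment bound $0\le\partial_+\phi_\infty\le 1$; so $1+\partial_+\phi_\infty$... rather, $1-\big(-\partial_+\phi_\infty\big)$ is, up to the discontinuity set, the c.d.f. of a sub-probability law on $\mathbb{R}$. Using the moment control (uniform integrability of $(\xi\mid u)$ against $\mu_n$, which follows from boundedness of $\int|\xi|^2\mu_n$ and Cauchy--Schwarz) one upgrades this to a genuine probability law $\rho_u$ on $\mathbb{R}$, and $\mu_n\circ(\xi\mapsto(\xi\mid u))^{-1}\xRightarrow{(\mathbb{R})}\rho_u$.

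Next I would reconstruct a candidate limit $\mu_\infty$ on $H$. Fix a countable orthonormal basis $(h_k)_{k\ge1}$; for each finite $m$ the finite-dimensional projections $X_n^{(m)}=\sum_{k=1}^m(X_n\mid h_k)h_k$ have laws that converge weakly, because all one-dimensional projections $(u\mid X_n^{(m)})$ with $u\in\mathrm{span}(h_1,\dots,h_m)$ converge (these are covered by the slicing-function convergence above, via $(\lambda\mid X_n^{(m)})=|\lambda^{(m)}|\,(X_n\mid u^{(m)})$ as in Lemma~\ref{31}), and finite-dimensional weak convergence is detected by convergence of characteristic functions. Call $\nu_m$ the weak limit of $\mathrm{Law}(X_n^{(m)})$ on $H$ (supported on the $m$-dimensional subspace). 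The family $(\nu_m)_m$ is consistent under further projection; together with the uniform bound $\sup_n\mathbb{E}|X_n|^2<\infty$, which passes through Fatou to give $\sup_m\int_H|\xi|^2\nu_m(d\xi)\le m$, this Kolmogorov-type consistency yields a probability measure $\mu_\infty$ on $H$ with $\int_H|\xi|^2\mu_\infty(d\xi)<\infty$ whose $m$-dimensional marginals are the $\nu_m$; hence $\mu_\infty\in\mathcal{P}_2(H)$. One then checks $\mu_n\xRightarrow{(H_w)}\mu_\infty$: a weakly continuous bounded cylindrical function depends only on finitely many coordinates, so its expectation converges by the finite-dimensional weak convergence, and cylindrical functions are convergence-determining for the weak topology on $H$ together with the second-moment bound (alternatively, the second-moment bound gives tightness of $(\mu_n)$ for the weak topology on bounded sets of $H$, and every weak-topology limit point has the prescribed finite-dimensional marginals, hence equals $\mu_\infty$).

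Finally I would identify $f$. For a fixed pair $(a,b)\in H^2$, the function $\xi\mapsto|\xi-a|^2\wedge|\xi-b|^2-|\xi|^2$ is continuous and, by the elementary bound used in~\eqref{equiintegrable}, dominated by $C_{a,b}(1+|\xi|)$; since $\sup_n\int_H|\xi|^2\mu_n(d\xi)<\infty$ this family is uniformly integrable against $(\mu_n)$. But uniform integrability alone is not quite enough in the Hilbert setting because this test function is not weakly continuous on $H$ (norms are only weakly lower semicontinuous), so here I would instead argue through the one-dimensional reductions: the quantity $e_{2,2}^2(\mu_n,(a,b))-e_{2,2}^2(\mu_n,(0,0))$ equals, after the $a=\lambda u,\ b=\lambda' u$ parametrization, a fixed affine-plus-$\phi$ expression in $\phi_{\mu_n}$ evaluated at two points, which converges to the same expression in $\phi_\infty=\phi_{\mu_\infty}$ (the last equality because the one-dimensional marginal of $\mu_\infty$ along $u$ is $\rho_u$, already matched to $\phi_\infty$); and this limit is precisely $e_{2,2}^2(\mu_\infty,(a,b))-\int_H|\xi|^2\mu_\infty(d\xi)$. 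General $(a,b)\in H^2$ is handled by noting $|\xi-a|^2\wedge|\xi-b|^2-|\xi|^2=\big(|\xi-a|^2-|\xi|^2\big)-\big(|\xi-a|^2-|\xi-b|^2\big)_+$ and that the first parenthesis is linear in $\xi$, hence its $\mu_n$-integral converges by the first-moment convergence implied by second-moment convergence plus weak convergence along each coordinate, while the $(\cdot)_+$ term reduces as above. Passing $n\to\infty$ in $f^2(a,b)=\lim_n e_{2,2}^2(\mu_n,(a,b))$ and rearranging gives $f^2=e_{2,2}^2(\mu_\infty,\cdot)+m-\int_H|\xi|^2\mu_\infty(d\xi)$ with $m=\lim_n\int_H|\xi|^2\mu_n(d\xi)$, which is the claimed identity.

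\textbf{Main obstacle.} The delicate point is the construction of $\mu_\infty$ on the infinite-dimensional $H$ from the consistent finite-dimensional limits, and proving genuine weak-topology convergence $\mu_n\xRightarrow{(H_w)}\mu_\infty$ rather than mere convergence of all finite-dimensional projections: one must invoke the second-moment bound to get the requisite tightness on balls (which are weakly compact) and to rule out ``escape of mass to infinity in the weak topology,'' and to ensure the reconstructed cylindrical set function extends to a countably additive measure with finite second moment. The accompanying nuisance is that the natural test function $|\xi-a|^2\wedge|\xi-b|^2$ is not weakly continuous, which is why the whole identification of $f$ is routed through the one-dimensional slicing functions $\phi_\mu$ rather than through a direct portmanteau argument on $H$.
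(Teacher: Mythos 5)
Your proposal is workable, but it is built around a misconception that the paper's proof exploits in the opposite direction, and this is what makes your route so much longer. You assert that the test function $\xi\mapsto \min\big(|\xi-a|^2,|\xi-b|^2\big)-|\xi|^2$ is ``not weakly continuous on $H$ (norms are only weakly lower semicontinuous)'' and therefore re-route the whole identification of $f$ through the one-dimensional slicing functions $\phi_{\mu}$. In fact the quadratic terms cancel:
\[
\min\big(|\xi-x_1|^2,|\xi-x_2|^2\big)-|\xi|^2=\min\big(|x_1|^2-2(x_1\mid\xi),\,|x_2|^2-2(x_2\mid\xi)\big),
\]
a minimum of two affine, hence weakly continuous, functions of $\xi$, and it is sublinear, hence uniformly integrable under the second-moment bound. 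This single observation is the paper's entire argument: boundedness of $\int_H|\xi|^2\mu_n(d\xi)=e_{2,2}^2\big(\mu_n,(0,0)\big)$ gives tightness of $(\mu_n)$ for the weak topology (closed balls being weakly compact and metrizable by separability); extract a weak-topology limit point $\mu_\infty$, which lies in $\mathcal{P}_2(H)$ by weak lower semicontinuity of $|\cdot|^2$; pass to the limit in $e_{2,2}^2(\mu_{\varphi(n)},x)-e_{2,2}^2\big(\mu_{\varphi(n)},(0,0)\big)$ using the displayed identity; and conclude that any two limit points have quantization error functions differing by a constant, hence coincide by Proposition~\ref{e22prop}, which upgrades subsequential to full convergence. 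By contrast, you construct $\mu_\infty$ from scratch via convergence of all one-dimensional marginals (through the convex functions $\phi_{\mu_n}$ and Lemma~\ref{convex}), a Cram\'er--Wold/L\'evy argument for the finite-dimensional projections, a Kolmogorov consistency extension, a Fatou argument to land the extension in $\ell^2$ with finite second moment, and a cylindrical-function identification of the weak-topology limit. Each of these steps can be completed (your uniform-integrability upgrades from the moment bound are the right tool, and your final rearrangement yields the correct identity $f^2=e_{2,2}^2(\mu_\infty,\cdot)+\lim_n\int|\xi|^2\mu_n-\int|\xi|^2\mu_\infty$, which incidentally corrects a typo in the statement), so the proof is sound; but you trade a half-page argument for several pages of reconstruction, and you forgo the use of the static characterization Proposition~\ref{e22prop}, which is what lets the paper avoid building $\mu_\infty$ by hand.
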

\begin{proof}
The sequence $e_{2,2}\big(\mu_n,(0,0)\big)^2 = \int_H |\xi|^2\mu_n(d\xi)$, $n\ge 1$, is bounded, hence  the sequence $(\mu_n)_{n\ge 1}$ is tight for the weak topology $(H_{w})$ on $H$, which is metrizable since $H$ is separable (and generate the same Borel $\sigma$-field  as the strong one). Consequently there exists a subsequence $\mu_{\varphi(n)}\stackrel{(H_{w})}{\Longrightarrow}\mu_{\infty}\!\in \mathcal{P}_2(H)$ since the mapping $\xi \mapsto |\xi|^2$ is weakly  lower semi-continuous and non-negative. Now note that, for a fixed $x=(x_1,x_2)\!\in H^2$, the mapping $\xi \mapsto \min \big(|\xi-x_1|^2, |\xi-x_2|^2\big)-|\xi|^2 =\min \big(|x_1|^2-2(x_1|\xi),|x_2|^2-2(x_2|\xi)\big)$ is weakly continuous and $(\mu_n)_{n\ge 1}$-uniformly integrable since it is sublinear.  Hence 
\begin{align*}
e_{2,2}^2(\mu_{\varphi(n)}, x) \longrightarrow &\int_H\min \big(|x_1|^2-2(x_1|\xi),|x_2|^2-2(x_2|\xi)\big)\mu_{\infty}(d\xi) +f^2\big((0,0)\big)\mbox{ as } n\to +\infty\\
& =    e_{2,2}^2(\mu_\infty,x) + f^2\big((0,0)\big)- \int_H |\xi|^2 \, \mu_{\infty}(d\xi).
\end{align*} 
For two such limiting distributions $\mu_{\infty}$ and $\mu'_{\infty}$ it follows from what precedes that $e_{2,2}^2(\mu_{\infty}, \cdot)= e_{2,2}^2(\mu'_{\infty}, \cdot)+C_{\infty}$ for some real constant $C_{\infty}$. Hence $\mu_{\infty}= \mu'_{\infty}$ by Proposition~\ref{e22prop}, which in turn implies that $\mu_n \stackrel{(H_{w})}{\Longrightarrow}\mu_{\infty}$.
\end{proof}

\section{Further quantization based characterizations
on $\mathbb{R}$}\label{enr1}
Let $|\cdot|$ denote the absolute value on $\mathbb{R}$. Results from  Section~\ref{genernalcase} (Theorem~\ref{charstat} and~\ref{charcvg}, Proposition~\ref{ldpprop}-$(i)$) imply that $N_{1,p}:=N_{1, p, \left|\cdot\right|}\leq 3$ for any $p\geq1$. Moreover, Proposition~\ref{e22prop} and Theorem~\ref{e22thm}
 imply that $N_{1, 2}=2$.  Other quantization based characterizations are developped   in Section~\ref{chardim1}. Then we discuss the completeness of the distance $\mathcal{Q}_{1, 1}$ \big(defined in (\ref{Qnpdisdef})\big) on $\mathcal{P}_{1}(\mathbb{R})$ and of $\mathcal{Q}_{2, 2}$ on $\mathcal{P}_{2}(\mathbb{R})$ with opposite answers  in Section~\ref{42}.

\subsection{Quantization based characterization on $\mathbb{R}$}\label{chardim1}
\begin{prop}[$p=1$]\label{e11d1}
\begin{enumerate}[$(a)$]
\item Let $\mu, \nu\!\in\mathcal{P}_{1}(\mathbb{R})$. If $e_{1,1}(\mu, \cdot)=e_{1,1}(\nu, \cdot)+C$ fror some constant, then $\mu=\nu$ and $C=0$. 

\item If  $\mu_{n}\!\in\mathcal{P}_{1}(\mathbb{R})$, $n\!\in\mathbb{N}^{*}\cup\{\infty\}$, the following properties are equivalent:
\begin{enumerate}[$(i)$]
\item $\mathcal{W}_{1}(\mu_{n},\mu_{\infty})\xrightarrow{\;n\rightarrow+\infty\;}0$,
\item $e_{1,1}(\mu_{n},\cdot)\xrightarrow{\;n\rightarrow+\infty\;}e_{1,1}(\mu_{\infty},\cdot)$ uniformly,
\item $e_{1,1}(\mu_{n},\cdot)\xrightarrow{\;n\rightarrow+\infty\;}e_{1,1}(\mu_{\infty},\cdot)$ pointwise.
\end{enumerate}
\item The distance $\mathcal{Q}_{1,1}$ and $\mathcal{W}_{1}$ are   topologically equivalent on $\mathcal{P}_{1}(\mathbb{R})$ and $N_{1,1}=1$. 
\end{enumerate}
\end{prop}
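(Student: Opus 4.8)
The plan is to build everything on one elementary one-dimensional observation: for $\mu\!\in\mathcal{P}_1(\mathbb{R})$ the map $a\mapsto e_{1,1}(\mu,a)=\int_\mathbb{R}|\xi-a|\,\mu(d\xi)$ is finite and convex (an average of the convex functions $a\mapsto|\xi-a|$), and since its difference quotients are bounded by $1$, dominated convergence shows that its right derivative is
\[
\partial_+e_{1,1}(\mu,\cdot)(a)=\mu\big((-\infty,a]\big)-\mu\big((a,+\infty)\big)=2F_\mu(a)-1,
\]
where $F_\mu$ is the cumulative distribution function of $\mu$. Thus $e_{1,1}(\mu,\cdot)$, known up to an additive constant, determines $F_\mu$, hence $\mu$. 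Part $(a)$ is then immediate: from $e_{1,1}(\mu,\cdot)=e_{1,1}(\nu,\cdot)+C$ I would differentiate to get $2F_\mu-1=2F_\nu-1$, i.e. $F_\mu\equiv F_\nu$, so $\mu=\nu$; reinserting $\mu=\nu$ forces $C=0$.

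For $(b)$: $(i)\Rightarrow(ii)$ is inequality~(\ref{06}) at level $N=1$ and $(ii)\Rightarrow(iii)$ is trivial. For $(iii)\Rightarrow(i)$, I first evaluate the pointwise convergence at $a=0$, which reads $\int_\mathbb{R}|\xi|\,\mu_n(d\xi)\to\int_\mathbb{R}|\xi|\,\mu_\infty(d\xi)$: this is exactly condition $(\beta)$ of Theorem~\ref{was1} (with $x_0=0$, $p=1$) and in particular bounds the first moments, giving tightness of $(\mu_n)$. Next, the convex functions $e_{1,1}(\mu_n,\cdot)$ converge pointwise to the convex function $e_{1,1}(\mu_\infty,\cdot)$, so Lemma~\ref{convex} gives $\partial_+e_{1,1}(\mu_n,\cdot)(a)\to\partial_+e_{1,1}(\mu_\infty,\cdot)(a)$, i.e. $F_{\mu_n}(a)\to F_{\mu_\infty}(a)$, at every $a$ outside the at-most-countable atom set of $\mu_\infty$, that is, at every continuity point of $F_{\mu_\infty}$. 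This is precisely weak convergence $\mu_n\xRightarrow{(\mathbb{R})}\mu_\infty$, i.e. condition $(\alpha)$; combined with $(\beta)$, Theorem~\ref{was1}$(a)$ yields $\mathcal{W}_1(\mu_n,\mu_\infty)\to0$.

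For $(c)$: $\mathcal{Q}_{1,1}$ is symmetric and satisfies the triangle inequality by construction, and $\mathcal{Q}_{1,1}(\mu,\nu)=0$ forces $e_{1,1}(\mu,\cdot)=e_{1,1}(\nu,\cdot)$, hence $\mu=\nu$ by $(a)$; so $\mathcal{Q}_{1,1}$ is a genuine distance on $\mathcal{P}_1(\mathbb{R})$. Inequality~(\ref{control}) gives $\mathcal{Q}_{1,1}\leq\mathcal{W}_1$, while $\mathcal{Q}_{1,1}(\mu_n,\mu_\infty)\to0$ implies (uniform, hence pointwise) convergence of $e_{1,1}(\mu_n,\cdot)$ to $e_{1,1}(\mu_\infty,\cdot)$, whence $\mathcal{W}_1(\mu_n,\mu_\infty)\to0$ by $(b)$; this gives topological equivalence. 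Since $(a)$ and $(b)$ show both \emph{Question 1} and \emph{Question 2} are answered positively already at level $N=1$, and $N_{1,1}\geq1$ by definition, we get $N_{1,1}=1$.

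The main technical point is the identification of the right derivative of $e_{1,1}(\mu,\cdot)$ with $2F_\mu-1$ — a dominated-convergence estimate on difference quotients — together with recognizing that the discontinuity set of this right derivative is exactly the atom set of $\mu_\infty$, which is what turns the conclusion of Lemma~\ref{convex} into convergence in distribution of the one-dimensional laws and hence (with the moment convergence) into $\mathcal{W}_1$-convergence. Beyond that, the whole statement is a repackaging of Theorem~\ref{was1}, Lemma~\ref{convex}, and the general inequalities~(\ref{06})--(\ref{control}) established earlier.
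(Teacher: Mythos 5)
Your proposal is correct and follows essentially the same route as the paper: identify the right derivative of the convex function $a\mapsto e_{1,1}(\mu,a)$ with $2F_\mu(a)-1$ to get $(a)$, then use Lemma~\ref{convex} to convert pointwise convergence of these convex functions into convergence of the distribution functions at continuity points of $F_{\mu_\infty}$, combine with the first-moment convergence from evaluation at $a=0$, and invoke Theorem~\ref{was1}; part $(c)$ then follows as in the paper. The only differences are cosmetic (you spell out the dominated-convergence justification of the derivative formula and an explicit tightness remark that the paper leaves implicit).
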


\begin{proof}$(a)$ The function $e_{1,1}(\mu,\cdot)$ reads $ x\mapsto\int_{\mathbb{R}}\left|\xi-x\right|\mu(d\xi)$, hence it is convex and its right derivative is given by $x\mapsto-1+2\mu\big(]-\infty,x]\big)$. So if $e_{1,1}(\mu,\cdot)=e_{1,1}(\nu,\cdot)+C$, we have $\mu\big(]-\infty,x]\big)=\nu\big(]-\infty,x]\big)$ for all $x\!\in\mathbb{R}$, which implies $\mu=\nu$ (and $C=0$).

\noindent$(b)$ 
It is obvious that $(i)\Rightarrow(ii)$ and $(ii)\Rightarrow(iii)$. Now we prove $(iii)\Rightarrow(i)$. 

For every $n\geq1$, $e_{1,1}(\mu_{n},\cdot)$ can also be written as $a\mapsto\int_{\mathbb{R}}\left|\xi-a\right|\mu_{n}(d\xi)$, which is convex with right derivative  at $a$ given by $-1+2\mu_{n}\big(]-\infty,a]\big)$. Consequently, if $e_{1,1}(\mu_{n},\cdot)$ converges pointwise to $e_{1,1}(\mu_{\infty},\cdot)$ on $\mathbb{R}$, then $\mu_{n}\big(]-\infty,a]\big)$ converges pointwise to $\mu_{\infty}\big(]-\infty,a]\big)$ for all $a\!\in\mathbb{R}$ such that $\mu_{\infty}(\big\{a\big\})=0$ by Lemma~\ref{convex}. This implies $\mu_{n}\xRightarrow{(\mathbb{R})}\mu_{\infty}$. 
The convergence of the first moment follows from $e_{1,1}(\mu_{n},0)\xrightarrow{n\rightarrow+\infty}e_{1,1}(\mu_{\infty},0)$. Hence, we conclude that $\mathcal{W}_{1}(\mu_{n},\mu_{\infty})\xrightarrow{n\rightarrow+\infty}0$ by Theorem~\ref{was1}.

\noindent$(c)$ The claim $(c)$ is a direct result from $(a)$ and $(b)$.
\end{proof}

\begin{prop}[Even integer $p\ge 2$]\label{d1peven} Let  $p$ be an even integer, $p\geq2$.

\vspace{-0.4cm}
\begin{enumerate}[$(a)$]
\item Let  $\mu, \nu\!\in\mathcal{P}_{p}(\mathbb{R})$ such  that $e_{2,p}(\mu, \cdot)=e_{2, p}(\nu, \cdot)+ C $ for some real constant $C$. Then $\mu=\nu$.
\item If $\mu_{n}\!\in\mathcal{P}_{p}(\mathbb{R})$,  $n\!\in\mathbb{N}^{*}\cup\{\infty\}$,  the following properties are equivalent:
\begin{enumerate}[$(i)$]
\item $\mathcal{W}_{p}(\mu_{n},\mu_{\infty})\xrightarrow{\;n\rightarrow+\infty\;}0$,
\item $e_{2,p}(\mu_{n},\cdot)\xrightarrow{\;n\rightarrow+\infty\;}e_{2,p}(\mu_{\infty},\cdot)$ uniformly,
\item $e_{2,p}(\mu_{n},\cdot)\xrightarrow{\;n\rightarrow+\infty\;}e_{2,p}(\mu_{\infty},\cdot)$ pointwise.
\end{enumerate}
\item The distances $\mathcal{Q}_{2,p}$ and  $\mathcal{W}_{p}$ are topologically equivalent on $\mathcal{P}_{p}(\mathbb{R})$  and $N_{1, p}=2$.
\end{enumerate}
\end{prop}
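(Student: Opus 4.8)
The plan is to transpose the quadratic arguments of Proposition~\ref{e22prop} and Theorem~\ref{e22thm} to an even integer $p\ge2$, the only genuinely new ingredient being a geometric observation on the support of $\big((\xi-a)^p-(\xi-b)^p\big)_+$. Since $p$ is even, $|\xi-a|^p=(\xi-a)^p$, so $e_{2,p}^p\big(\mu,(a,b)\big)=\int_{\mathbb R}\min\big((\xi-a)^p,(\xi-b)^p\big)\,\mu(d\xi)$ and the identity $(x-y)_+=x-x\wedge y$ gives, for all $a,b\in\mathbb R$ (all integrals being finite since $\mu\in\mathcal P_p(\mathbb R)$),
\[
\int_{\mathbb R}\big((\xi-a)^p-(\xi-b)^p\big)_+\,\mu(d\xi)=\int_{\mathbb R}(\xi-a)^p\,\mu(d\xi)-e_{2,p}^p\big(\mu,(a,b)\big).
\]
The key point is that $\{(\xi-a)^p>(\xi-b)^p\}=\{|\xi-a|>|\xi-b|\}$, which for $b>a$ is the half-line $\big(\tfrac{a+b}{2},+\infty\big)$; this is exactly where the evenness of $p$ enters.

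For part $(a)$, I would first dispose of the constant. On the diagonal, $e_{2,p}\big(\mu,(t,t)\big)=\big(\int_{\mathbb R}|\xi-t|^p\mu(d\xi)\big)^{1/p}$ equals $t-m_1^\mu+o(1)$ as $t\to+\infty$ and $|t|+m_1^\mu+o(1)$ as $t\to-\infty$, with $m_1^\mu=\int_{\mathbb R}\xi\,\mu(d\xi)$; since $t\mapsto e_{2,p}(\mu,(t,t))-e_{2,p}(\nu,(t,t))$ is constantly $C$ but has opposite limits $\mp(m_1^\mu-m_1^\nu)$ at $\pm\infty$, necessarily $C=0$ (and $m_1^\mu=m_1^\nu$). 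Thus $e_{2,p}^p(\mu,\cdot)=e_{2,p}^p(\nu,\cdot)$; the diagonal case then gives $\int(\xi-a)^p\mu=\int(\xi-a)^p\nu$ for all $a$, and the displayed identity yields $\int\big((\xi-a)^p-(\xi-b)^p\big)_+\mu=\int\big((\xi-a)^p-(\xi-b)^p\big)_+\nu$ for all $a,b$. Writing $a=m-h$, $b=m+h$ with $h>0$ and using the half-line observation,
\[
\big((\xi-a)^p-(\xi-b)^p\big)_+=2\!\!\sum_{\substack{1\le j\le p-1\\ j\ \mathrm{odd}}}\!\!\binom{p}{j}(\xi-m)^{p-j}h^j\,\mathbbm{1}_{\{\xi>m\}},
\]
so $h\mapsto\int\big((\xi-a)^p-(\xi-b)^p\big)_+\mu(d\xi)$ is a polynomial in $h$ of degree $p-1$; identifying the coefficient of $h^{p-1}$ in the $\mu$- and $\nu$-versions gives $\int(\xi-m)_+\,\mu(d\xi)=\int(\xi-m)_+\,\nu(d\xi)$ for every $m$. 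As $m\mapsto\int(\xi-m)_+\,\mu(d\xi)$ is a finite convex function with right derivative $-\mu\big((m,+\infty)\big)$, equality of these functions forces equality of the survival functions, hence $\mu=\nu$ (and $C=0$).

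For part $(b)$, $(i)\Rightarrow(ii)$ is $(\ref{06})$ and $(ii)\Rightarrow(iii)$ is trivial. For $(iii)\Rightarrow(i)$ I would follow the proof of Theorem~\ref{charcvg}: pointwise convergence of $e_{2,p}(\mu_n,\cdot)$ gives $\int|\xi|^p\mu_n\to\int|\xi|^p\mu_\infty$ at $(0,0)$ (recall $|\xi|^p=\xi^p$) and $\int(\xi-a)^p\mu_n\to\int(\xi-a)^p\mu_\infty$ on the diagonal, so by the displayed identity $\int\big((\xi-a)^p-(\xi-b)^p\big)_+\mu_n\to\int\big((\xi-a)^p-(\xi-b)^p\big)_+\mu_\infty$ for all $a,b$; these being polynomials in $h$ of degree $\le p-1$ converging pointwise, their leading coefficients converge, i.e.\ $\int(\xi-m)_+\mu_n\to\int(\xi-m)_+\mu_\infty$ for every $m$. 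The maps $m\mapsto\int(\xi-m)_+\mu_n(d\xi)$ being convex with right derivatives $-\mu_n\big((m,+\infty)\big)$, Lemma~\ref{convex} gives $\mu_n\big((m,+\infty)\big)\to\mu_\infty\big((m,+\infty)\big)$ at every continuity point, whence $\mu_n\xRightarrow{(\mathbb R)}\mu_\infty$, and Theorem~\ref{was1} together with the convergence of $\int|\xi|^p\mu_n$ gives $\mathcal W_p(\mu_n,\mu_\infty)\to0$. Part $(c)$ is then routine: $\mathcal Q_{2,p}\le\mathcal W_p<+\infty$ by $(\ref{control})$, symmetry and the triangle inequality are immediate, and $\mathcal Q_{2,p}(\mu,\nu)=0$ forces $\mu=\nu$ by $(a)$, so $\mathcal Q_{2,p}$ is a distance; it is topologically equivalent to $\mathcal W_p$ thanks to $\mathcal Q_{2,p}\le\mathcal W_p$ and $(iii)\Rightarrow(i)$ of $(b)$; finally $N_{1,p}=2$, the bound $N_{1,p}\le2$ coming from $(a)$--$(b)$ and $N_{1,p}\ge2$ from the fact that $e_{1,p}(\mu,a)=\int_{\mathbb R}(\xi-a)^p\mu(d\xi)$ depends on $\mu$ only through its moments of order $\le p$, which do not determine a probability measure.

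The step I expect to be the main obstacle is the bookkeeping in $(a)$: the additive constant must be ruled out before the $p$-th powers of $e_{2,p}$ can be said to differ by a constant, and from the polynomial-in-$h$ identity one has to single out precisely the first-moment functional $m\mapsto\int(\xi-m)_+\,d\mu$ (the top coefficient, in $h^{p-1}$) rather than the higher-order functional $\int\big((\xi-m)_+\big)^{p-1}\,d\mu$, which does not obviously characterize $\mu$. Apart from this, everything is a faithful transcription of the quadratic case, the new content being confined to the identity $\{(\xi-a)^p>(\xi-b)^p\}=\{|\xi-a|>|\xi-b|\}$ valid for even $p$.
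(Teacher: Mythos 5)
Your proof is correct, but it follows a genuinely different route from the paper's. The paper proves part $(a)$ by an induction on $p$: Lemma~\ref{ptop2} expresses $e_{2,p-2}^{p-2}\big(\mu,(a,b)\big)$ as the combination $\partial_a^2+\partial_b^2-2\partial_a\partial_b$ of second-order partials of $e_{2,p}^{p}\big(\mu,(a,b)\big)$, valid when $\mu$ has a continuous density; iterating brings the problem down to $p=2$, where Proposition~\ref{e22prop} applies, and the general case is recovered by convolving with a Gaussian $\mathcal N(0,\varepsilon^2)$ and letting $\varepsilon\to0$. Part $(b)$ is handled by the same descent. You instead exploit the evenness of $p$ once and for all through the identity $\big\{(\xi-a)^p>(\xi-b)^p\big\}=\big\{\xi>\tfrac{a+b}{2}\big\}$, expand $\big((\xi-a)^p-(\xi-b)^p\big)_+$ as an explicit odd polynomial in the half-width $h$, and read off from its $h^{p-1}$-coefficient the stop-loss transform $m\mapsto\int(\xi-m)_+\,\mu(d\xi)$, whose right derivative is $-\mu\big((m,+\infty)\big)$; Lagrange interpolation transfers pointwise convergence of the polynomials to their coefficients in part $(b)$. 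What your approach buys is substantial: it needs no regularity of $\mu$, hence no Gaussian regularization step, and it sidesteps the delicate point in the paper's part $(b)$ of passing second derivatives to the limit under mere pointwise convergence. You also treat the additive constant $C$ more carefully than the paper does: since Lemma~\ref{ptop2} operates on $e_{2,p}^p$ while the hypothesis bears on $e_{2,p}$, the paper's descent implicitly presupposes $C=0$, whereas your asymptotic argument on the diagonal ($e_{2,p}\big(\mu,(t,t)\big)=|t|\mp m_1^\mu+o(1)$ as $t\to\pm\infty$) disposes of $C$ at the outset. The price is that your argument is tied to the algebraic structure of even integer exponents, exactly as the paper's is, so neither method generalizes beyond this case; for $p=2$ both collapse to the proof of Proposition~\ref{e22prop}.
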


The proof of Proposition~\ref{d1peven} is based on the following lemma. 
\begin{lem}\label{ptop2}
Let $p$ be an even number, $p\geq2$. Let $\mu\!\in\mathcal{P}_{p}(\mathbb{R})$ be  absolutely continuous   with density $f$ \emph{i.e.}   $\mu(d\xi)=f(\xi)d\xi$. If $f$ is continuous, then for any $a, b\!\in\mathbb{R}$ with $a<b$,
\begin{equation}\label{relation}
e_{2,p-2}^{p-2}\big(\mu,(a,b)\big)=\frac{1}{p(p-1)}\left( \frac{\partial^{2} e_{2,p}^{p}}{\partial a^{2}}\big(\mu,(a,b)\big)+\frac{\partial^{2} e_{2,p}^{p}}{\partial b^{2}}\big(\mu,(a,b)\big)-2\frac{\partial^{2} e_{2,p}^{p}}{\partial a\partial b}\big(\mu,(a,b)\big)\right).
\end{equation}
\end{lem}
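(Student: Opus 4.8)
\textbf{Proof plan for Lemma~\ref{ptop2}.}

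The plan is to compute everything explicitly by splitting the integral defining $e_{2,p}^{p}\big(\mu,(a,b)\big)$ along the Vorono\"i partition of $\mathbb{R}$ induced by $\{a,b\}$. With $a<b$ the midpoint $c=\frac{a+b}{2}$ separates the two cells, so
\[
e_{2,p}^{p}\big(\mu,(a,b)\big)=\int_{-\infty}^{c}|\xi-a|^{p}f(\xi)\,d\xi+\int_{c}^{+\infty}|\xi-b|^{p}f(\xi)\,d\xi .
\]
First I would differentiate this expression with respect to $a$ and to $b$. Since $p\ge 2$ the integrands vanish to order $\ge 2$ at the common endpoint $\xi=c$ (indeed $|\,c-a\,|=|\,c-b\,|=\frac{b-a}{2}>0$, but the boundary terms coming from $\partial c/\partial a=\partial c/\partial b=\tfrac12$ cancel between the two integrals because the two integrands agree at $\xi=c$ up to sign and $p$ even makes them equal), so the boundary contributions drop out and differentiation passes under the integral sign: $\frac{\partial}{\partial a}e_{2,p}^{p}=-p\int_{-\infty}^{c}(\xi-a)^{p-1}f\,d\xi$ (using $p$ even to drop the absolute value on the left cell where $\xi-a$ can have either sign — one must be a little careful here and keep $|\xi-a|^{p-1}\mathrm{sgn}(\xi-a)=(\xi-a)^{p-1}$ since $p-1$ is odd), and similarly $\frac{\partial}{\partial b}e_{2,p}^{p}=-p\int_{c}^{+\infty}(\xi-b)^{p-1}f\,d\xi$.

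Next I would take the second derivatives. Differentiating $\frac{\partial}{\partial a}e_{2,p}^{p}$ again in $a$ produces the bulk term $p(p-1)\int_{-\infty}^{c}(\xi-a)^{p-2}f\,d\xi$ plus a boundary term $-p(c-a)^{p-1}f(c)\cdot\frac12$; differentiating it in $b$ produces only the boundary term $-p(c-a)^{p-1}f(c)\cdot\frac12$. Symmetric formulas hold for the $b$-derivatives, with $(c-b)^{p-1}=-(c-a)^{p-1}$ (here $p-1$ odd is what flips the sign, since $c-b=-(c-a)$). Assembling the combination $\partial_{aa}+\partial_{bb}-2\partial_{ab}$, the bulk terms sum to $p(p-1)\big[\int_{-\infty}^{c}(\xi-a)^{p-2}f+\int_{c}^{+\infty}(\xi-b)^{p-2}f\big]=p(p-1)\,e_{2,p-2}^{p-2}\big(\mu,(a,b)\big)$ (using $p-2$ even so that $(\xi-a)^{p-2}=|\xi-a|^{p-2}$), while the boundary terms are $\big(-p(c-a)^{p-1}f(c)\cdot\frac12\big)$ from $\partial_{aa}$, $\big(+p(c-a)^{p-1}f(c)\cdot\frac12\big)$ from $\partial_{bb}$ (sign flip from $(c-b)^{p-1}$), and $-2$ times a term that I would check equals zero or cancels against the above; tracking the half-factors from $\partial c/\partial a=\partial c/\partial b=\tfrac12$ carefully, all boundary contributions cancel, leaving exactly \eqref{relation}.

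The main obstacle is the careful bookkeeping of the boundary terms generated by the moving interface $c=\frac{a+b}{2}$ under each of the three second derivatives, together with the sign handling of $(\xi-a)^{p-1}$ versus $|\xi-a|^{p-1}$ on the cell where the argument changes sign — this is where continuity of $f$ (so that the pointwise value $f(c)$ is meaningful and the Leibniz rule applies) and the parity of $p$, $p-1$, $p-2$ all get used. Once one is convinced the boundary terms cancel in the specific linear combination $\partial_{aa}+\partial_{bb}-2\partial_{ab}$ (which is natural, being the second directional derivative in the direction $(1,-1)$ that moves $a,b$ symmetrically and keeps $c$ fixed — a cleaner way to see the cancellation is to change variables so that the interface is stationary, i.e. set $s=\frac{a+b}{2}$, $t=\frac{b-a}{2}$ and note the claimed combination is, up to constants, $\partial_{tt}$ at fixed $s$, for which no boundary term arises), the bulk computation is routine differentiation under the integral sign justified by $f$ continuous and $\mu\in\mathcal{P}_p(\mathbb{R})$.
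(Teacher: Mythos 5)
Your approach is the same as the paper's: split the integral at the midpoint $c=\tfrac{a+b}{2}$, differentiate under the integral sign, and check that the boundary terms produced by the moving interface cancel in the combination $\partial_{aa}+\partial_{bb}-2\partial_{ab}$; your closing remark that this combination is $\partial_{tt}$ in the variables $s=\tfrac{a+b}{2}$, $t=\tfrac{b-a}{2}$ (for which the interface is stationary) is correct and is the cleanest certificate of the cancellation. One concrete correction to your bookkeeping, though: the boundary term of $\partial_{bb}e^{p}_{2,p}$ is $-\tfrac{p}{2^{p}}(b-a)^{p-1}f(c)$, i.e.\ it has the \emph{same} sign as the one from $\partial_{aa}$, not the opposite one — the flip coming from $(c-b)^{p-1}=-(c-a)^{p-1}$ is compensated by the fact that $c$ is the \emph{lower} limit of the second integral. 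Likewise the cross term is not zero: $\partial_{ab}e^{p}_{2,p}=-\tfrac{p}{2^{p}}(b-a)^{p-1}f(c)$ as well, so the cancellation reads $-X-X-2(-X)=0$ rather than your $-X+X-2\cdot(\text{something})$, which would require the cross term to vanish (it does not). With the corrected signs — or directly via your $(s,t)$ reparametrization — the identity \eqref{relation} follows exactly as in the paper.
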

\begin{proof}[Proof of Lemma~\ref{ptop2}]
Assume that $a<b$, then $e_{2,p}^{p}\big(\mu,(a,b)\big)=\int_{-\infty}^{\frac{a+b}{2}}\left|\xi-a\right|^{p}f(\xi)d\xi+\int_{\frac{a+b}{2}}^{+\infty}\left|\xi-b\right|^{p}f(\xi)d\xi$. Hence, the function $e_{2, p}^{p}\big(\mu, (a, b)\big)$ is continuously differentiable in $a$, since, for any even number $p\geq2$, we have $\frac{\partial \left|\xi-a\right|^{p}f(\xi)}{\partial a}=p(a-\xi)^{p-1}f(\xi)$ and 
\begin{align}
\sup_{a'\in(a-1, a+1)}\left|p(a'-\xi)^{p-1}f(\xi)\right|\leq p2^{p-1}f(\xi)\big[\left|a+1\right|^{p-1}\vee\left|a-1\right|^{p-1}+\left|\xi\right|^{p-1}\big]\!\in L^{1}(\lambda)\nonumber
\end{align}

since $\int_{\mathbb{R}}\left|\xi\right|^{p}f(\xi)d\xi<+\infty$. Likewise, $e_{2, p}^{p}\big(\mu, (a, b)\big)$ is continuously differentiable in $b$ with partial derivatives 
\begin{equation}
\frac{\partial e_{2,p}^{p}\big(\mu,(a,b)\big)}{\partial a}=p\int_{-\infty}^{\frac{a+b}{2}}(a-\xi)^{p-1}f(\xi)d\xi \text{ and }\frac{\partial e_{2,p}^{p}\big(\mu,(a,b)\big)}{\partial b}=p\int_{\frac{a+b}{2}}^{+\infty}(b-\xi)^{p-1}f(\xi)d\xi.\nonumber
\end{equation}

Moreover, we have $\frac{\partial (a-\xi)^{p-1}f(\xi)}{\partial a}=(p-1)(a-\xi)^{p-2}f(\xi)$ and
\begin{align}
\sup_{a'\in(a-1, a+1)}\left|(p-1)(a'-\xi)^{p-2}f(\xi)\right|\leq (p-1)2^{p-2}f(\xi)\big[\left|a+1\right|^{p-2}\vee\left|a-1\right|^{p-2}+\left|\xi\right|^{p-2}\big]\!\in L^{1}(d\xi)\nonumber
\end{align}
since $\int_{\mathbb{R}}\left|\xi\right|^{p}f(\xi)d\xi<+\infty$. By a similar reasoning, one derives that $e_{2, p}^{p}\big(\mu, (a, b)\big)$ is continuously twice differentiable with second order partial derivatives
\begin{align}
\frac{\partial^{2} e_{2,p}^{p}}{\partial a^{2}}\big(\mu,(a,b)\big)&=p\Big[\int_{-\infty}^{\frac{a+b}{2}}(p-1)(a-\xi)^{p-2}f(\xi)d\xi-\frac{1}{2^{p}}(b-a)^{p-1}f(\frac{a+b}{2})\Big],\nonumber\\
\frac{\partial^{2} e_{2,p}^{p}}{\partial b^{2}}\big(\mu,(a,b)\big)&=p\Big[\int_{\frac{a+b}{2}}^{+\infty}(p-1)(b-\xi)^{p-2}f(\xi)d\xi-\frac{1}{2^{p}}(b-a)^{p-1}f(\frac{a+b}{2})\Big],\nonumber\\
\frac{\partial^{2} e_{2,p}^{p}}{\partial a\partial b}\big(\mu,(a,b)\big)&=\frac{\partial^{2} e_{2,p}^{p}}{\partial b\partial a}\big(\mu,(a,b)\big)=-p\frac{1}{2^{p}}(b-a)^{p-1}f\Big(\frac{a+b}{2}\Big).\nonumber
\end{align}

Hence, for every $(a, b)\!\in\mathbb{R}^{2}$ such that $a<b$, 
\begin{equation}
\frac{\partial^{2} e_{2,p}^{p}}{\partial a^{2}}\big(\mu,(a,b)\big)+\frac{\partial^{2} e_{2,p}^{p}}{\partial b^{2}}\big(\mu,(a,b)\big)-2\frac{\partial^{2} e_{2,p}^{p}}{\partial a\partial b}\big(\mu,(a,b)\big)=p(p-1)e_{2,p-2}^{p-2}\big(\mu,(a,b)\big).\hfill\qedhere\nonumber
\end{equation}

\end{proof}

\begin{proof}[Proof of Proposition~\ref{d1peven}]
\noindent$(a)$ \emph{Step 1:  $\mu$ and $\nu$ are absolutely continuous with continuous density functions}. Note that $e_{2,p}(\mu,\cdot)=e_{2,p}(\nu,\cdot)+C $ implies either $\mu=\nu$  by Proposition~\ref{e22prop} if $p=2$,  or, if $p>2$ $e_{2,p-2}(\mu,\cdot)=e_{2,p-2}(\nu,\cdot)$  (after differentiation)  by Lemma~\ref{ptop2}. We can conclude by induction.

\noindent \emph{Step 2 (General case).} 
Let $X$,$Y$ be two random variables with the respective distributions $\mu$ and $\nu$, such that 
\begin{equation}\label{aaaa}
\forall (a,b)\!\in\mathbb{R}^{2},\;\;\;e_{2,p}^{p}\big(X,(a,b)\big)=e_{2,p}^{p}\big(Y,(a,b)\big) +C.
\end{equation}
Let $Z$ be a random variable with probability distribution $\mathbb{P}_{Z}=\mathcal{N}(0,1)$, independent of $X$ and $Y$. For every $\varepsilon>0$, 
\begin{equation}\label{e2pp}
e_{2,p}^{p}\big(X+\varepsilon Z, (a,b)\big)=\iint\min_{x\in\{a,b\}}\left|\xi+\varepsilon z-x\right|^{p}\mu(d\xi)\mathbb{P}_{Z}(dz)=\int e_{2,p}^{p}\big(X, (a,b)-\varepsilon z\big) \mathbb{P}_{Z}(dz).
\end{equation}

\vskip-0.2cm
We derive from (\ref{aaaa}) and (\ref{e2pp}) that
\begin{equation}\label{cccc}
\forall (a,b)\!\in\mathbb{R}^{2},\;\;\;e_{2,p}^{p}\big(X+\varepsilon Z,(a,b)\big)=e_{2,p}^{p}\big(Y+\varepsilon Z,(a,b)\big)+C.
\end{equation}
Moreover, the random variables $X+\varepsilon Z$ and $Y+\varepsilon Z$ have   distributions $\mathcal{N}(0,\varepsilon^{2})\ast\mu$ and $\mathcal{N}(0,\varepsilon^{2})\ast\nu$ respectively, both with continuous densities. It follows from \textit{Step 1} that $\text{Law}(X+\varepsilon Z)=\text{Law}(Y+\varepsilon Z)$ for every $\varepsilon>0$ so that Law($X$)=Law($Y$) by letting $\varepsilon\rightarrow0$.

\vskip 0.15cm \noindent$(b)$ It is obvious that $(i)\Rightarrow(ii)$ and $(ii)\Rightarrow(iii)$. Now we prove $(iii)\Rightarrow(i)$. It follows from Lemma~\ref{ptop2} that $e_{2,p}(\mu_{n},\cdot)\xrightarrow{n\rightarrow+\infty}e_{2,p}(\mu_{\infty},\cdot)$ implies $e_{2,p-2}(\mu_{n},\cdot)\xrightarrow{n\rightarrow+\infty}e_{2,p-2}(\mu_{\infty},\cdot)$ and, by induction, yields $e_{2,2}(\mu_{n},\cdot)\xrightarrow{n\rightarrow+\infty}e_{2,2}(\mu_{\infty},\cdot)$, so that Theorem~\ref{e22thm} and Theorem~\ref{was1} imply that $\mu_{n}$ converges weakly to $\mu_{\infty}$. The convergence of the $p$-th moment follows from $e_{2,p}(\mu_{n},0)\xrightarrow{n\rightarrow+\infty}e_{2,p}(\mu_{\infty},0)$. Hence $\mathcal{W}_{p}(\mu_{n},\mu_{\infty})\xrightarrow{n\rightarrow+\infty}0$ by Theorem~\ref{was1}.

\noindent$(c)$ 
The claim $(a)$ and $(b)$ directly imply that if $p$ is an even integer, $p\geq2$, the distances $\mathcal{Q}_{2,p}$ and  $\mathcal{W}_{p}$ are topologically equivalent on $\mathcal{P}_{p}(\mathbb{R})$ and $N_{1, p}\leq2$. Now we prove that $N_{1, p}=2$. Note that for every $x\in\mathbb{R}$, $e_{1,p}^{p}(\mu, x)=\int_{\mathbb{R}}\left|\xi-x\right|^{p}\mu(d\xi)=\int_{\mathbb{R}}(\xi^{2}-2\xi x+x^{2})^{\frac{p}{2}}\mu(d\xi)$, which is a polynome in $x$ and whose coefficients are the $k$-th moments of $\mu$, $k\in\{1, ..., p\}$. Thus, as soon as two different distributions $\mu$ and $\nu$ have the same first $p$ moments, $e^p_{1,p}(\mu, \cdot) = e^p_{1,p}(\nu, \cdot)$. This implies $N_{1, p}>1$.
\end{proof}

\subsection{About  completeness of  $\big(\mathcal{P}_{1}(\mathbb{R}), \mathcal{Q}_{1,1}\big)$ and  $\big(\mathcal{P}_{2}(\mathbb{R}), \mathcal{Q}_{N, 2}\big)$}\label{42}

We know from~\cite{bolley2008separability} that for $p\geq1$, $(\mathcal{P}_{p}(\mathbb{R}), \mathcal{W}_{p})$ is a complete space and we have proved that $\mathcal{Q}_{1, 1}$ (respectively $Q_{2, 2}$) is topologically equivalent to $\mathcal{W}_{1}$ (resp. $\mathcal{W}_{2}$) on $\mathcal{P}_{1}(\mathbb{R})$ (resp. $\mathcal{P}_{2}(\mathbb{R})$). Now we discuss whether $\mathcal{Q}_{1, 1}$ and $\mathcal{Q}_{2, 2}$ are  complete distances. 

\begin{prop}\label{cauchysequencedim1}
The metric space $\big(\mathcal{P}_{1}(\mathbb{R}), \mathcal{Q}_{1,1}\big)$ is complete.
\end{prop}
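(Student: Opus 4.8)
### Proof proposal

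The plan is to show that any $\mathcal{Q}_{1,1}$-Cauchy sequence $(\mu_n)_{n\ge 1}$ in $\mathcal{P}_1(\mathbb{R})$ converges to some $\mu_\infty\!\in\mathcal{P}_1(\mathbb{R})$ for $\mathcal{Q}_{1,1}$. Since $\mathcal{Q}_{1,1}$ and $\mathcal{W}_1$ are topologically equivalent (Proposition~\ref{e11d1}$(c)$) but a priori not metrically equivalent, Cauchyness for $\mathcal{Q}_{1,1}$ does not immediately transfer to Cauchyness for $\mathcal{W}_1$; the work is precisely in bridging that gap. Recall $e_{1,1}(\mu,x)=\int_{\mathbb{R}}|\xi-x|\,\mu(d\xi)$ is convex in $x$ with right derivative $x\mapsto -1+2F_\mu(x)$, where $F_\mu(x)=\mu\big(]-\infty,x]\big)$; and $\mathcal{Q}_{1,1}(\mu,\nu)=\sup_{x}\big|e_{1,1}(\mu,x)-e_{1,1}(\nu,x)\big|$.

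First I would extract a candidate limit. From $\mathcal{Q}_{1,1}$-Cauchyness, for every fixed $x$ the real sequence $e_{1,1}(\mu_n,x)$ is Cauchy, hence converges to some limit $g(x)$; moreover the convergence is uniform in $x$ (the sup-norm Cauchy condition), so $g$ is a uniform limit of $1$-Lipschitz convex functions, hence itself convex and $1$-Lipschitz. Also $e_{1,1}(\mu_n,0)=\int|\xi|\mu_n(d\xi)$ converges, so the first moments are bounded, giving tightness of $(\mu_n)$. Pick any weakly convergent subsequence $\mu_{\alpha(n)}\xRightarrow{(\mathbb{R})}\mu_\infty$. Along this subsequence one checks, exactly as in the proof of Theorem~\ref{charcvg} (uniform integrability of $\xi\mapsto |\xi-x|-|\xi|$ via the bound $|\,|\xi-x|-|\xi|\,|\le |x|$), that $e_{1,1}(\mu_{\alpha(n)},x)\to e_{1,1}(\mu_\infty,x)$ for every $x$; but the whole sequence $e_{1,1}(\mu_n,x)$ already converges to $g(x)$, so $g(x)=e_{1,1}(\mu_\infty,x)$ for all $x$. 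In particular $g=e_{1,1}(\mu_\infty,\cdot)$ is the quantization error function of an honest element $\mu_\infty\!\in\mathcal{P}_1(\mathbb{R})$ (finiteness of the first moment follows since $g(0)=\lim_n\int|\xi|\mu_n(d\xi)<+\infty$), and by Proposition~\ref{e11d1}$(a)$ this $\mu_\infty$ is unique, so the weak limit does not depend on the subsequence and $\mu_n\xRightarrow{(\mathbb{R})}\mu_\infty$.

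Finally I would upgrade this to $\mathcal{Q}_{1,1}(\mu_n,\mu_\infty)\to 0$. We have $e_{1,1}(\mu_n,x)\to e_{1,1}(\mu_\infty,x)$ pointwise, and by Proposition~\ref{e11d1}$(b)$ the implication $(iii)\Rightarrow(ii)$ gives that this convergence is in fact uniform in $x$, i.e.\ $\mathcal{Q}_{1,1}(\mu_n,\mu_\infty)=\big\Vert e_{1,1}(\mu_n,\cdot)-e_{1,1}(\mu_\infty,\cdot)\big\Vert_{\sup}\to 0$. (Equivalently: $(iii)\Rightarrow(i)$ gives $\mathcal{W}_1(\mu_n,\mu_\infty)\to 0$, whence $\mathcal{Q}_{1,1}(\mu_n,\mu_\infty)\le \mathcal{W}_1(\mu_n,\mu_\infty)\to 0$ by inequality~\eqref{control}.) This proves completeness.

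The main obstacle is the passage from "$\mathcal{Q}_{1,1}$-Cauchy" to a genuine limiting probability measure: the sup-norm limit $g$ of the functions $e_{1,1}(\mu_n,\cdot)$ is visibly a nice convex Lipschitz function, but one must rule out "escape of mass" and actually exhibit $g$ as $e_{1,1}(\mu_\infty,\cdot)$ for some $\mu_\infty\!\in\mathcal{P}_1(\mathbb{R})$ — this is where tightness (from the bounded first moments, via $g(0)=\lim_n e_{1,1}(\mu_n,0)$) and the identification of the subsequential weak limit's quantization error function with $g$ do the essential work. Once $\mu_\infty$ is in hand, everything else is a direct appeal to Proposition~\ref{e11d1}.
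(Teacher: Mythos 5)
There is a genuine gap at the key step. From $\mu_{\alpha(n)}\xRightarrow{(\mathbb{R})}\mu_\infty$ and the boundedness of $f_x(\xi)=|\xi-x|-|\xi|$ you may only conclude that
\[
e_{1,1}(\mu_{\alpha(n)},x)-e_{1,1}(\mu_{\alpha(n)},0)=\int_{\mathbb{R}} f_x\,d\mu_{\alpha(n)}\longrightarrow \int_{\mathbb{R}} f_x\,d\mu_\infty=e_{1,1}(\mu_\infty,x)-e_{1,1}(\mu_\infty,0),
\]
not that $e_{1,1}(\mu_{\alpha(n)},x)\to e_{1,1}(\mu_\infty,x)$: the latter requires $\int|\xi|\,\mu_{\alpha(n)}(d\xi)\to\int|\xi|\,\mu_\infty(d\xi)$, i.e.\ uniform integrability of $\xi\mapsto|\xi|$ with respect to $(\mu_{\alpha(n)})_{n}$, which is \emph{not} implied by boundedness of the first moments (think of $\mu_n=(1-\tfrac1n)\delta_0+\tfrac1n\delta_n$). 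What your argument actually yields is $g=e_{1,1}(\mu_\infty,\cdot)+C$ with $C=\lim_n\int|\xi|\,\mu_{\alpha(n)}(d\xi)-\int|\xi|\,\mu_\infty(d\xi)\ge 0$ by Fatou; if $C>0$ then $\mathcal{Q}_{1,1}(\mu_n,\mu_\infty)\to C\neq 0$ and completeness would fail. Ruling out $C>0$ is the heart of the matter, and no soft argument can do it: for $\mathcal{Q}_{N,2}$ the analogous constant is genuinely nonzero for the counterexample sequence (Lemma~\ref{cauchyunif} with $C_0=1$), which is precisely why $\big(\mathcal{P}_2(\mathbb{R}),\mathcal{Q}_{N,2}\big)$ is not complete (Theorem~\ref{notcomplet}).

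The paper's proof supplies the missing ingredient. It establishes, for any $\nu\!\in\mathcal{P}_1(\mathbb{R})$, the asymptotics $\lim_{a\to\pm\infty}\big[e_{1,1}(\nu,a)-|a|\big]=\mp\int_{\mathbb{R}}\xi\,\nu(d\xi)$, applies this both to the subsequential weak limit and to the $\mu_{\varphi(n)}$, and then uses the \emph{uniform} convergence $\left\Vert e_{1,1}(\mu_n,\cdot)-e_{1,1}(\delta_0,\cdot)-g\right\Vert_{\sup}\to0$ to interchange $\lim_n$ and $\lim_{a\to\pm\infty}$; matching the two regimes $a\to+\infty$ and $a\to-\infty$ forces $C=-C$, hence $C=0$. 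Your remaining steps (uniqueness of the weak limit via Proposition~\ref{e11d1}$(a)$, which indeed tolerates an additive constant, and the upgrade to uniform convergence via Proposition~\ref{e11d1}$(b)$) are correct once $C=0$ is known, but as written the proposal asserts rather than proves the only nontrivial point.
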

\begin{proof}
The inequality (\ref{control}) directly implies that a Cauchy sequence in $\big(\mathcal{P}_{1}(\mathbb{R}), \mathcal{W}_{1}\big)$ is also a Cauchy sequence in $\big(\mathcal{P}_{1}(\mathbb{R}), \mathcal{Q}_{1, 1}\big)$. Now let $(\mu_{n})_{n\geq1}$ be a Cauchy sequence in $\big(\mathcal{P}_{1}(\mathbb{R}), \mathcal{Q}_{1, 1}\big)$. It follows from the definition of $\mathcal{Q}_{1,1}$ that $\big(e_{1,1}(\mu_{n}, \cdot)-e_{1, 1}(\delta_{0}, \cdot)\big)_{n\geq1}$ is a Cauchy sequence in $\big(\mathcal{C}_{b}(\mathbb{R}, \mathbb{R}), \left\Vert\cdot\right\Vert_{\sup}\big)$.

As $\big(\mathcal{C}_{b}(\mathbb{R}, \mathbb{R}), \left\Vert\cdot\right\Vert_{\sup}\big)$ is complete, there exists a function $g\!\in \mathcal{C}_{b}(\mathbb{R}, \mathbb{R}) $ such that 
\begin{equation}\label{cvgsup11}
\left\Vert \big(e_{1,1}(\mu_{n}, \cdot)-e_{1, 1}(\delta_{0}, \cdot)\big)-g\right\Vert_{\sup}\xrightarrow{n\rightarrow+\infty}0.
\end{equation}
Note that for any $a\!\in\mathbb{R}$, $e_{1, 1}(\delta_{0}, a)=\left|a\right|$. 
The sequence $e_{1,1}(\mu_{n}, 0)-e_{1,1}(\delta_{0}, 0)=e_{1,1}(\mu_{n}, 0)$ is also a Cauchy sequence in $\mathbb{R}$. Therefore, $\big(e_{1,1}(\mu_{n}, 0)\big)_{n\geq1}= \big(\int_{\mathbb{R}}\left|\xi\right|\mu_{n}(d\xi)\big)_{n\geq1}$ is bounded, which implies that $(\mu_{n})_{n\geq1}$ is tight. It follows from Prohorov's theorem that there exists a subsequence $(\mu_{\varphi(n)})_{n\geq1}$ weakly converging to $\widetilde{\mu}_{\infty}$. Moreover, by Fatou's lemma in distribution, $\widetilde{\mu}_{\infty}\!\in\mathcal{P}_{1}(\mathbb{R})$ since $\int_{\mathbb{R}}\left|\xi\right|\widetilde{\mu}_{\infty}(d\xi)\leq\liminf_{n}\int_{\mathbb{R}}\left|\xi\right|\mu_{\varphi(n)}(d\xi)<+\infty$.

\smallskip
Now, we prove that $g=e_{1,1}(\widetilde{\mu}, \cdot)-e_{1,1}(\delta_{0}, \cdot)$. 
First, let us define a function $f_{a}(\xi)\coloneqq\left|\xi-a\right|-\left|\xi\right|$. For every $a\!\in\mathbb{R}$, $f_{a}$ is bounded and continuous. Hence, the weak convergence of $(\mu_{\varphi(n)})_{n\geq1}$ implies that $\displaystyle \int_{\mathbb{R}}f_{a}(\xi)\mu_{\varphi(n)}(d\xi)\xrightarrow{n\rightarrow+\infty}\int_{\mathbb{R}}f_{a}(\xi)\widetilde{\mu}_{\infty}(d\xi)$. 

Besides, $\int_{\mathbb{R}}f_{a}(\xi)\mu_{\varphi(n)}(d\xi)=\int_{\mathbb{R}}\big[\left|\xi-a\right|-\left|\xi\right|\big]\mu_{\varphi(n)}(d\xi)=e_{1,1}(\mu_{\varphi(n)}, a)-e_{1,1}(\mu_{\varphi(n)}, 0)$,
which converges to $\big(g(a)+e_{1,1}(\delta_{0}, a)\big)-\big(g(0)+e_{1,1}(\delta_{0}, 0)\big)$ as $n\rightarrow+\infty$ by (\ref{cvgsup11}). Hence, for every $a\!\in\mathbb{R}$,
\[
\big(g(a)+e_{1,1}(\delta_{0}, a)\big)-\big(g(0)+\underset{=0}{\underbrace{e_{1,1}(\delta_{0}, 0}})\big)=\int_{\mathbb{R}}f_{a}(\xi)\widetilde{\mu}_{\infty}(d\xi)=e_{1,1}(\widetilde{\mu}_{\infty}, a)-e_{1,1}(\widetilde{\mu}_{\infty}, 0),\]
i.e. $e_{1,1}(\widetilde{\mu}_{\infty}, a)-e_{1,1}(\delta_{0}, a)-g(a)=e_{1,1}(\widetilde{\mu}_{\infty}, 0)-g(0)$. Setting $C=g(0)-e_{1,1}(\widetilde{\mu}_{\infty}, 0)$, we derive that for every $a\!\in\mathbb{R}$,
\begin{align}\label{gconst}
e_{1,1}(\widetilde{\mu}_{\infty}, a)-e_{1,1}(\delta_{0}, a)-g(a)=C.
\end{align}

Now we prove that $C=0$. 
Generally, for any $\nu\!\in\mathcal{P}_{1}(\mathbb{R})$, one has
\begin{align}
&\lim_{a\rightarrow+\infty}\big(e_{1,1}(\nu, a \big)-e_{1,1}(\delta_{0}, a)\big)=\lim_{a\rightarrow+\infty}\big(e_{1,1}(\nu, a \big)-\left|a\right|\big)
=\lim_{a\rightarrow+\infty}\big(e_{1,1}(\nu, a \big)-a\big)\nonumber\\
&=\lim_{a\rightarrow+\infty}\Big(\int_{\mathbb{R}}\left|\xi-a\right|\nu(d\xi)-a\Big)=\lim_{a\rightarrow+\infty}\Big(\int_{\{\xi\geq a\}}(\xi-a)\nu(d\xi)+\int_{\{\xi<a\}}(a-\xi)\nu(d\xi)-a\Big)\nonumber\\
&=\lim_{a\rightarrow+\infty}\Big(\int_{\{\xi\geq a\}}\xi\nu(d\xi)-2\int_{\{\xi\geq a\}}a\nu(d\xi)+\int_{\{\xi<a\}}(-\xi)\nu(d\xi)\Big).\nonumber
\end{align}

As $\nu\!\in\mathcal{P}_{1}(\mathbb{R})$ i.e. $\int_{\mathbb{R}}\left|\xi\right|\nu(d\xi)<+\infty$, we derive that $\lim_{a\rightarrow+\infty}\int_{\xi<a}(-\xi)\nu(d\xi)=\int_{\mathbb{R}}(-\xi)\nu(d\xi)$ and 
$\lim_{a\rightarrow+\infty}\int_{\{\xi\geq a\}}\xi\nu(d\xi)=0$. This implies 
\begin{align}
0\leq\lim_{a\rightarrow+\infty}\int_{\{\xi\geq a\}}a\,\nu(d\xi)\leq \lim_{a\rightarrow+\infty}\int_{\{\xi\geq a\}}\xi\,\nu(d\xi)=0.\nonumber
\end{align}
After a similar calculation with $\lim_{a\rightarrow-\infty}\big(e_{1,1}(\nu, a \big)-e_{1,1}(\delta_{0}, a)\big)$, we get 
\begin{align}\label{limainfini}
\lim_{a\rightarrow+\infty}\big[e_{1,1}(\nu, a \big)-e_{1,1}(\delta_{0}, a)\big]=\int_{\mathbb{R}}(-\xi)\nu(d\xi)\;\,\text{and}\,\lim_{a\rightarrow-\infty}\big[e_{1,1}(\nu, a \big)-e_{1,1}(\delta_{0}, a)\big]=\int_{\mathbb{R}}\xi\nu(d\xi).
\end{align}
Combining (\ref{gconst}) and (\ref{limainfini}) with $\nu=\widetilde{\mu}_{\infty}$ shows that 
\begin{align}
\lim_{a\rightarrow+\infty}g(a)=-C-\int_{\mathbb{R}}\xi\widetilde{\mu}_{\infty}(d\xi)\;\text{and}\;\lim_{a\rightarrow-\infty}g(a)=-C+\int_{\mathbb{R}}\xi\widetilde{\mu}_{\infty}(d\xi).\nonumber
\end{align}
On the other hand, for every $n\geq1$, (\ref{limainfini}) applied to $\nu=\mu_{\varphi(n)}$ implies 
\begin{align}
\lim_{a\rightarrow\pm\infty}e_{1, 1}(\mu_{\varphi(n)}, a)-e_{1, 1}(\delta_{0}, a)=\mp\int_{\mathbb{R}}\xi\mu_{\varphi(n)}(d\xi).\nonumber
\end{align}
Up to a new extraction of $\mu_{\varphi(n)}$, still denoted by $\mu_{\varphi(n)}$, we may assume that $\int_{\mathbb{R}}\xi\mu_{\varphi(n)}(d\xi)\rightarrow\widetilde{C}\!\in\mathbb{R}$ as $n\rightarrow+\infty$ since $\big(e_{1,1}(\mu_{n}, 0)\big)_{n\geq1}=\big(\int_{\mathbb{R}}\left|\xi\right|\mu_{n}(d\xi)\big)_{n\geq1}$ is bounded. 

Now the uniform convergence (\ref{cvgsup11}) implies that
\begin{align*}
\lim_{n}&\lim_{a\rightarrow\pm\infty}\Big[e_{1,1}(\mu_{\varphi(n)}, a)-e_{1, 1}(\delta_{0}, a)-g(a)\Big]=0 \nonumber
\end{align*}
so that $\widetilde{C}=C+\int_{\mathbb{R}}\xi\widetilde{\mu}_{\infty}(d\xi)=-C+\int_{\mathbb{R}}\xi\widetilde{\mu}_{\infty}(d\xi)$, which in turn implies $C=0$, i.e. $g=e_{1,1}(\widetilde{\mu}_{\infty}, \cdot)-e_{1,1}(\delta_{0}, \cdot)$. Then it follows from (\ref{cvgsup11}) that 
\begin{align}
\left\Vert \big(e_{1,1}(\mu_{n}, \cdot )-e_{1,1}(\delta_{0}, \cdot)\big)-\big(e_{1,1}(\widetilde{\mu}_{\infty}, \cdot)-e_{1,1}(\delta_{0}, \cdot)\big)\right\Vert=\left\Vert e_{1,1}(\mu_{n}, \cdot)-e_{1,1}(\widetilde{\mu}_{\infty}, \cdot)\right\Vert_{\sup}\xrightarrow{n\rightarrow+\infty}0\nonumber
\end{align}

Hence, $\mathcal{W}_{1}(\mu_{n}, \widetilde{\mu}_{\infty})\rightarrow 0$ by applying Proposition~\ref{e11d1}, that is, $(\mu_{n})_{n\geq1}$ is a Cauchy sequence in $(\mathcal{P}_{1}(\mathbb{R}), \mathcal{W}_{1})$. The completeness of $(\mathcal{P}_{1}(\mathbb{R}), \mathcal{W}_{1})$ implies immediately that $(\mathcal{P}_{1}(\mathbb{R}), \mathcal{Q}_{1,1})$ is complete. 
\end{proof}

\begin{thm}\label{notcomplet}
For any $N\geq2$,  the metric space $\big(\mathcal{P}_{2}(\mathbb{R}),\mathcal{Q}_{N,2}\big)$ is not   complete. 
\end{thm}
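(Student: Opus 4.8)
The strategy is to exhibit an explicit Cauchy sequence $(\mu_n)_{n\ge 1}$ in $\big(\mathcal{P}_2(\mathbb{R}),\mathcal{Q}_{N,2}\big)$ that does not converge in this space. Since $\mathcal{Q}_{N,2}$ is topologically equivalent to $\mathcal{W}_2$ on $\mathcal{P}_2(\mathbb{R})$ (by Corollary~\ref{cola} and, for $N=2$, Corollary~\ref{cor:equidist} together with Proposition~\ref{d1peven}), any $\mathcal{Q}_{N,2}$-convergent sequence is $\mathcal{W}_2$-convergent; hence it suffices to find a sequence that is $\mathcal{Q}_{N,2}$-Cauchy but has \emph{no} $\mathcal{W}_2$-limit. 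The natural candidate is a sequence escaping to infinity in a controlled way, e.g. $\mu_n = (1-\tfrac1n)\delta_0 + \tfrac1n \delta_{a_n}$ with $a_n \to +\infty$ chosen so that $\tfrac1n a_n^2 \to c > 0$ (so the second moments stay bounded and in fact converge), while $a_n$ itself diverges. Such a sequence is tight-failing for $\mathcal{W}_2$ in the sense that its only possible weak limit is $\delta_0$, but $\int |\xi|^2\,\mu_n(d\xi) \to c \ne 0 = \int|\xi|^2\,\delta_0(d\xi)$, so by Theorem~\ref{was1}(a) it is not $\mathcal{W}_2$-convergent; one should also rule out convergence to any other $\mu_\infty$, which follows because weak convergence already forces $\mu_\infty = \delta_0$.

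The core work is to verify the Cauchy property for $\mathcal{Q}_{N,2}$. First I would reduce, via the $1$-Lipschitz bound \eqref{control} and the pointwise-convergence machinery, to controlling $\big\|e_{N,2}(\mu_n,\cdot) - e_{N,2}(\mu_m,\cdot)\big\|_{\sup}$ directly. For the two-point measures above, $e_{N,2}^2(\mu_n,x) = (1-\tfrac1n)\min_i |x_i|^2 + \tfrac1n \min_i |x_i - a_n|^2$. The idea is that the "mass at $a_n$" contributes a term $\tfrac1n \min_i|x_i-a_n|^2$ which, uniformly in $x$, one must show becomes negligible in the appropriate combined sense when compared against the $\mu_m$ term — the subtlety being that $\min_i |x_i - a_n|^2$ is \emph{not} bounded in $x$, so a naive sup bound fails. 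This is where one uses that $e_{N,2}$ is compared as a \emph{difference} against $e_{N,2}(\delta_0,\cdot)$ (equivalently against $e_{N,2}(\mu_m,\cdot)$): the unbounded parts $\min_i|x_i|^2$ largely cancel, and one is left estimating $\big|\tfrac1n\min_i|x_i-a_n|^2 - \tfrac1n\min_i|x_i|^2\big|$ plus a similar term for $m$; using $\big|\min_i|x_i-a_n|^2 - \min_i|x_i|^2\big| \le \max_i\big(2|x_i||a_n| + a_n^2\big)$ is still not uniformly bounded, so instead one should split on whether some $x_i$ is close to $a_n$ or not, exploiting that $\mathcal{Q}_{N,2}$ only sees the square-root $e_{N,2}$, not $e_{N,2}^2$, and that $\sqrt{u+v}-\sqrt{u} \le \sqrt v$. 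Concretely, $e_{N,2}(\mu_n,x) \le \sqrt{(1-\tfrac1n)\min_i|x_i|^2} + \sqrt{\tfrac1n}\,\min_i|x_i-a_n|$, and one plays the triangle-type inequalities in $L^2$ to get a bound of the form $\mathcal{Q}_{N,2}(\mu_n,\mu_m) \le C\big(\tfrac{a_n}{\sqrt n} - \tfrac{a_m}{\sqrt m}\big) + o(1)$ type expression — this is Cauchy precisely when $\tfrac{a_n}{\sqrt n}$ converges, i.e. $\tfrac1n a_n^2 \to c$.

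I would then assemble the argument: (1) fix $a_n$ with $a_n \to \infty$ and $a_n^2/n \to c \in (0,\infty)$, e.g. $a_n = \sqrt{cn}$; (2) show $(\mu_n)$ is $\mathcal{Q}_{N,2}$-Cauchy by the estimate above; (3) observe $\mu_n \Rightarrow \delta_0$ weakly (since $\mu_n(\{|\xi|>\varepsilon\}) = \tfrac1n \to 0$) so $\delta_0$ is the unique weak-limit candidate; (4) note $\int|\xi|^2\,\mu_n(d\xi) = c \not\to 0$, so by Theorem~\ref{was1}(a) $\mu_n \not\to \delta_0$ in $\mathcal{W}_2$, hence $\mu_n$ has no $\mathcal{W}_2$-limit at all; (5) conclude by topological equivalence that $\mu_n$ has no $\mathcal{Q}_{N,2}$-limit, so $\big(\mathcal{P}_2(\mathbb{R}),\mathcal{Q}_{N,2}\big)$ is not complete. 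The main obstacle I anticipate is Step~(2): making the uniform-in-$x$ estimate rigorous, since the relevant functions are unbounded on $\mathbb{R}^N$ and the cancellation of the dominant $\min_i|x_i|^2$ terms must be tracked carefully (the passage from a difference of \emph{squares}, where cancellation is transparent, to a difference of \emph{square roots}, where it is what $\mathcal{Q}_{N,2}$ actually measures). One clean route is to bound $\mathcal{Q}_{N,2}(\mu_n,\mu_m) \le \mathcal{W}_2(\mu_n,\mu_m)$ and compute $\mathcal{W}_2(\mu_n,\mu_m)$ directly for these simple measures using an explicit coupling, showing it behaves like $|a_n/\sqrt n - a_m/\sqrt m|$ up to controllable error — this sidesteps the $\sup$-norm subtlety entirely and reduces everything to an elementary computation.
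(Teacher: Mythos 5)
There is a genuine gap, and it is fatal to the concrete construction: your candidate sequence $\mu_{n}=(1-\tfrac1n)\delta_{0}+\tfrac1n\delta_{a_{n}}$ with $a_{n}=\sqrt{cn}$ is \emph{not} $\mathcal{Q}_{N,2}$-Cauchy. The sup over $x\in\mathbb{R}^{N}$ is exactly where it breaks: take the test point $x=(0,a_{n},\dots,a_{n})$. Then $e_{N,2}(\mu_{n},x)=0$, because one codepoint sits on each atom of $\mu_{n}$, whereas
\begin{equation}
e_{N,2}^{2}\big(\mu_{m},(0,a_{n},\dots,a_{n})\big)=\tfrac{1}{m}\,(a_{m}-a_{n})^{2}=c\big(1-\sqrt{n/m}\,\big)^{2},\nonumber
\end{equation}
which stays bounded away from $0$ (e.g.\ it tends to $c$ for $m=n^{2}\to\infty$). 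Hence $\mathcal{Q}_{N,2}(\mu_{n},\mu_{m})\geq\sqrt{c}\,(1-\sqrt{n/m})\not\to0$. The moral is that an escaping \emph{atom} can always be absorbed by a single codepoint, so the quantization error function "sees" it at the scale $\sqrt{c}$ in sup norm; the cancellation you hoped for does not occur uniformly in $x$. Your fallback "clean route" — bounding $\mathcal{Q}_{N,2}(\mu_{n},\mu_{m})\leq\mathcal{W}_{2}(\mu_{n},\mu_{m})$ and showing the right-hand side is small — is self-defeating: if $(\mu_{n})$ were $\mathcal{W}_{2}$-Cauchy it would $\mathcal{W}_{2}$-converge by completeness of $(\mathcal{P}_{2}(\mathbb{R}),\mathcal{W}_{2})$, which is precisely what you need to fail. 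So \emph{any} working example must be $\mathcal{Q}_{N,2}$-Cauchy while being bounded away from $\mathcal{W}_{2}$-Cauchy, and this rules out controlling $\mathcal{Q}$ by $\mathcal{W}_{2}$.

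The paper's proof uses the same overall architecture as your plan (steps (3)--(5) are exactly right), but the counterexample is the lognormal sequence $X_{n}=e^{\frac n2 Z-\frac{n^{2}}{4}}$, $Z\sim\mathcal{N}(0,1)$: here $X_{n}\to0$ a.s., $\mathbb{E}X_{n}\to0$, yet $\mathbb{E}X_{n}^{2}=1$. The quantitative heart of the argument is Lemma~\ref{cauchyunif} (uniform convergence along the diagonal $(a,\dots,a)$) combined with the key estimate $\sup_{K\geq0}K\,\mathbb{E}(X_{n}-K)_{+}\to0$, which controls $\int\big(|\xi-a|^{2}-|\xi-b|^{2}\big)_{+}\mu_{n}(d\xi)=2(b-a)\mathbb{E}\big(X_{n}-\tfrac{a+b}{2}\big)_{+}$ uniformly in $(a,b)$ and hence the discrepancy between $e_{N,2}(\mu_{n},x)$ and its diagonal value; an induction on $N$ with a careful case discussion on the signs and relative sizes of the $x_{i}$ finishes the job. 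Your example violates exactly this criterion: with $K=a_{n}/2$ one gets $K\,\mathbb{E}(X_{n}-K)_{+}=\tfrac{a_{n}^{2}}{4n}=\tfrac c4\not\to0$. If you want to repair your approach, you must replace the atom at $a_{n}$ by mass that is spread out enough that no $N$ codepoints can capture the escaping second moment — which is what the lognormal achieves.
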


 \vskip -0.2cm
We will build a  sequence on $\mathcal{P}_{2}(\mathbb{R})$ which is Cauchy for $\mathcal{Q}_{N, 2}$ but not for $\mathcal{W}_{2}$. First, we have the following result.

\begin{lem}\label{cauchyunif}
Let $(\mu_{n})_{n\geq1}$ be a $\mathcal{P}_{2}(\mathbb{R}^{d})$-valued sequence which converges weakly to $\mu_{\infty}$ and, for $n\!\in\mathbb{N}^{*}\cup\{\infty\}$, let $X_{n}$ denote a $\mu_n$-distributed random variable . Assume that $\lim_{n}\mathbb{E}\left|X_{n}\right|^{2}$ exists and is finite. Then 
\begin{equation}\label{unifcauchy}
\sup_{a\in\mathbb{R}^{d}}\left|e_{2,2}\big(\mu_{n}, (a,a)\big)-\sqrt{e_{2,2}^{2}\big(\mu_{\infty}, (a,a)\big)+C_{0}}\right|\xrightarrow{n\rightarrow+\infty}0,
\end{equation}
where $\displaystyle C_{0}=\lim_{n}\mathbb{E}\left|X_{n}\right|^{2}-\mathbb{E}\left|X_{\infty}\right|^{2}\in[0,+\infty)$. 
\end{lem}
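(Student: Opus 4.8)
The plan is to write everything in terms of $a\in\mathbb{R}^d$ and exploit the explicit quadratic form of $e_{2,2}^2(\mu_n,(a,a))$. Observe first that $e_{2,2}^2\big(\mu_n,(a,a)\big)=\mathbb{E}\,|X_n-a|^2=\mathbb{E}\,|X_n|^2-2\big(\mathbb{E}\,X_n\mid a\big)+|a|^2$, and similarly for $\mu_\infty$. Since $\mu_n\Rightarrow\mu_\infty$ and $\sup_n\mathbb{E}\,|X_n|^2<+\infty$ (because the limit $\lim_n\mathbb{E}\,|X_n|^2$ exists and is finite), the coordinate maps $\xi\mapsto\xi^{(i)}$ are $(\mu_n)$-uniformly integrable, hence $\mathbb{E}\,X_n\to\mathbb{E}\,X_\infty$ in $\mathbb{R}^d$; combined with $\mathbb{E}\,|X_n|^2\to\mathbb{E}\,|X_\infty|^2+C_0$ this gives, \emph{uniformly for $a$ in any bounded set},
\[
e_{2,2}^2\big(\mu_n,(a,a)\big)\;\longrightarrow\; e_{2,2}^2\big(\mu_\infty,(a,a)\big)+C_0 .
\]
That $C_0\ge 0$ follows from Fatou's lemma for weak convergence: $\mathbb{E}\,|X_\infty|^2\le\liminf_n\mathbb{E}\,|X_n|^2=\lim_n\mathbb{E}\,|X_n|^2$. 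So the quantity inside $|\cdot|$ in \eqref{unifcauchy} tends to $0$ pointwise in $a$; the whole issue is to upgrade this to a \emph{uniform} statement over all of $\mathbb{R}^d$.

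The second step handles large $a$. Write $g_n(a):=e_{2,2}\big(\mu_n,(a,a)\big)$ and $h_n(a):=\sqrt{e_{2,2}^2(\mu_\infty,(a,a))+C_0}$. Using $|e_{2,2}(\mu_n,(a,a))-|a||=|\,\|X_n-a\|_2-\|a\|_2\,|\le\|X_n\|_2$ and similarly $|h_n(a)-|a||=|\sqrt{\,|a|^2-2(\mathbb{E}X_\infty\mid a)+\mathbb{E}|X_\infty|^2+C_0}-|a||$, which stays bounded as $|a|\to\infty$ (the square root is $|a|\sqrt{1+O(1/|a|)}=|a|+O(1)$), one controls both $g_n$ and $h_n$ near infinity. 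More precisely, use the elementary bound $|\sqrt{u}-\sqrt{v}|\le|u-v|/(\sqrt{u}+\sqrt{v})$ to write
\[
\big|g_n(a)-h_n(a)\big|=\frac{\big|e_{2,2}^2(\mu_n,(a,a))-e_{2,2}^2(\mu_\infty,(a,a))-C_0\big|}{g_n(a)+h_n(a)} .
\]
The numerator is an affine function of $a$: it equals $\big|\,(\mathbb{E}|X_n|^2-\mathbb{E}|X_\infty|^2-C_0)-2(\mathbb{E}X_n-\mathbb{E}X_\infty\mid a)\,\big|\le \varepsilon_n(1+|a|)$ with $\varepsilon_n:=|\mathbb{E}|X_n|^2-\mathbb{E}|X_\infty|^2-C_0|+2|\mathbb{E}X_n-\mathbb{E}X_\infty|\to 0$, while the denominator satisfies $g_n(a)+h_n(a)\ge c(|a|-R)$ for $|a|$ large, for suitable constants $c>0,\,R$ not depending on $n$ (here one uses $\sup_n\|X_n\|_2<\infty$). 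Hence for $|a|\ge 2R$ say, $|g_n(a)-h_n(a)|\le \frac{\varepsilon_n(1+|a|)}{c(|a|-R)}\le C'\varepsilon_n$ for a universal $C'$, which $\to 0$ uniformly.

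The third step finishes: on the remaining compact set $\{|a|\le 2R\}$ we already know $e_{2,2}^2(\mu_n,(a,a))\to e_{2,2}^2(\mu_\infty,(a,a))+C_0$ uniformly (this convergence is uniform on bounded sets because the difference is an affine function of $a$ with coefficients $\varepsilon_n\to 0$, as computed above), and since $h_n(a)\ge\sqrt{C_0}$ is only a degenerate lower bound, we instead note $g_n(a)+h_n(a)$ is bounded \emph{below away from $0$ uniformly in $n$ and in $a$ on this compact set only if $C_0>0$}; when $C_0=0$ we must argue slightly differently near points where $h_n(a)=0$, i.e. $a=\mathbb{E}X_\infty$ with $\mathrm{Var}$ degenerate — but there $\|X_n-a\|_2^2\to 0$ as well, so $g_n(a)\to 0$, and the pointwise convergence of the \emph{squared} quantities plus continuity of $\sqrt{\cdot}$ gives $|g_n(a)-h_n(a)|\to 0$, and uniformity on the compact set follows from a standard Dini-type / equicontinuity argument since all the $g_n$ are $1$-Lipschitz in $a$ (by \eqref{1lip}) and so is $h_n$. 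Combining the estimate on $\{|a|\le 2R\}$ with the estimate on $\{|a|\ge 2R\}$ yields $\sup_{a\in\mathbb{R}^d}|g_n(a)-h_n(a)|\to 0$, which is \eqref{unifcauchy}.

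The main obstacle is precisely the uniformity over the unbounded parameter set, and in particular the case $C_0=0$ where the candidate limit $h_n$ can vanish and the simple ``divide by $g_n+h_n$'' trick degenerates; the fix is to combine the uniform convergence of the squared errors on compacts (which holds because that difference is affine in $a$) with the uniform $1$-Lipschitz continuity of all functions involved, and to treat large $|a|$ separately via the affine-over-linear bound above.
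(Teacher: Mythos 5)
Your proposal is correct and follows essentially the same route as the paper: uniform convergence on a compact set obtained from the pointwise convergence of the squared errors (affine in $a$ with vanishing coefficients) together with the $1$-Lipschitz equicontinuity of $e_{2,2}(\mu_n,(a,a))$ (the paper invokes Arzel\'a--Ascoli here), and for large $|a|$ the same bound $|g_n-h|\le\frac{|\text{affine in }a|}{g_n+h}$ with denominator growing linearly in $|a|$ uniformly in $n$. Your worry about $C_0=0$ on the compact part is a non-issue, exactly as you conclude, since one never divides by $g_n+h$ there.
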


\begin{proof}[Proof of Lemma~\ref{cauchyunif}]
An elementary computation shows that
\[e_{2,2}^{2}\big(\mu_{n}, (a,a)\big)=\int_{\mathbb{R}^{d}}\left|\xi-a\right|^{2}\mu_{n}(d\xi)=\int_{\mathbb{R}^{d}}\left|\xi\right|^{2}\mu_{n}(d\xi)-2\Big(\int_{\mathbb{R}^{d}}\xi\mu_{n}(d\xi)\,\big{|}\,a\Big)+\left|a\right|^{2}.
\]

\vskip -0.2cm
As $\Big(\int_{\mathbb{R}^{d}}\left|\xi\right|^{2}\mu_{n}(d\xi)\Big)_{n\geq1}$ is bounded and $\mu_{n}\xRightarrow{\;(\mathbb{R}^{d})\;}\mu_{\infty}$, we have $\int_{\mathbb{R}^{d}}\xi\mu_{n}(d\xi)\rightarrow\int_{\mathbb{R}^{d}} \xi\mu_{\infty}(d\xi)$. It follows that 
\begin{align}
&e_{2,2}^{2}\big(\mu_{n}, (a,a)\big) =\int_{\mathbb{R}^{d}}\left|\xi\right|^{2}\mu_{n}(d\xi)-2\Big(\int_{\mathbb{R}^{d}}\xi\mu_{n}(d\xi)\,\big{|}\, a\Big)+\left|a\right|^{2}\nonumber\\
&\hspace{0.4cm}\xrightarrow{\;n\rightarrow+\infty\;}\int_{\mathbb{R}^{d}}\left|\xi\right|^{2}\mu_{\infty}(d\xi)+C_{0}-2\Big(\int_{\mathbb{R}^{d}}\xi\mu_{\infty}(d\xi)\,\big{|}\,a\Big)+\left|a\right|^{2}=e_{2,2}^{2}\big(\mu_{\infty}, (a,a)\big) + C_{0}\nonumber.
\end{align}

Therefore, for every compact set $K$ in $\mathbb{R}^{d}$, we have 
\begin{equation}\label{c1}
\sup_{a\in K}\left|e_{2,2}\big(\mu_{n}, (a,a)\big)-\sqrt{e_{2,2}^{2}\big(\mu_{\infty}, (a,a)\big)+C_{0}}\right|\xrightarrow{n\rightarrow+\infty}0,
\end{equation}
owing to Arzel\'a-Ascoli theorem, since all  functions $e_{N,p}$ are  1-Lipschitz continuous  (see~\eqref{1lip}).  
On the other hand, we have
\begin{small}\begin{flalign} 
&\hspace{1cm}\left|e_{2,2}\big(\mu_{n}, (a,a)\big)-\sqrt{e_{2,2}^{2}(\mu_{\infty}, (a,a))+C_{0}}\right|&\nonumber\\
&\hspace{2cm}=\frac{\left|e_{2,2}^{2}\big(\mu_{n}, (a,a)\big)-\Big(e_{2,2}^{2}\big(\mu_{\infty}, (a,a)\big)+C_{0}\Big)\right|}{e_{2,2}\big(\mu_{n}, (a,a)\big)+\sqrt{e_{2,2}^{2}\big(\mu_{\infty}, (a,a)\big)+C_{0}}}&\nonumber\\
&\hspace{2cm}=\frac{\left|\mathbb{E}\big(\left|X_{n}\right|^{2}-2(a\,|\,X_{n})+\left|a\right|^{2}\big)-\mathbb{E}\big(\left|X_{n}\right|^{2}-2(a\,|\,X_{n})+\left|a\right|^{2}\big)-C_{0}\right|}{\left\Vert X_{n}-a\right\Vert_{2}+\left\Vert X_{\infty}-a\right\Vert_{2}}&\nonumber\\
&\hspace{2cm}\leq \frac{2\left|(a\mid \mathbb{E}X_{\infty}-\mathbb{E}X_{n})\right|+\left|\mathbb{E}\left|X_{n}\right|^{2}-\mathbb{E}\left|X_{\infty}\right|^{2}-C_{0}\right|}{\left\Vert X_{n}-a\right\Vert_{2}+\left\Vert X_{\infty}-a\right\Vert_{2}}&\nonumber\\
&\hspace{2cm}\leq \frac{2\left|a\right|\left|\mathbb{E}X_{\infty}-\mathbb{E}X_{n}\right|+\left|\mathbb{E}\left|X_{n}\right|^{2}-\mathbb{E}\left|X_{\infty}\right|^{2}-C_{0}\right|}{\big| \left\Vert X_{n}\right\Vert_{2}-\left|a\right|\big|+\big| \left\Vert X_{\infty}\right\Vert_{2}-\left|a\right|\big|}.&
\end{flalign}\end{small}
Let $A\coloneqq 2\sup_{n \in\mathbb{N}\cup\{\infty\}}\mathbb{E}\left|X_{n}\right|^{2}$, then 
\begin{small}\begin{flalign}\label{c2}
&\hspace{1cm}\sup_{\left|a\right|>A}\left|e_{2,2}\big(\mu_{n}, (a,a)\big)-\sqrt{e_{2,2}^{2}(\mu_{\infty}, (a,a))+C_{0}}\right|&\nonumber\\
&\hspace{2cm}\leq \sup_{\left|a\right|>A}\frac{2\left|a\right|\left|\mathbb{E}X_{\infty}-\mathbb{E}X_{n}\right|+\left|\mathbb{E}\left|X_{n}\right|^{2}-\mathbb{E}\left|X_{\infty}\right|^{2}-C_{0}\right|}{\left|a\right|-\left\Vert X_{n}\right\Vert_{2}+\left|a\right|-\left\Vert X_{\infty}\right\Vert_{2}}&\nonumber\\
&\hspace{2cm}\leq\sup_{\left|a\right|>A}\frac{2\left|a\right|\left|\mathbb{E}X_{\infty}-\mathbb{E}X_{n}\right|+\left|\mathbb{E}\left|X_{n}\right|^{2}-\mathbb{E}\left|X_{\infty}\right|^{2}-C_{0}\right|}{2\left|a\right|-A}&\nonumber\\
&\hspace{2cm}\leq \sup_{\left|a\right|>A}2\left|\mathbb{E}X_{\infty}-\mathbb{E}X_{n}\right|+\frac{\left|\mathbb{E}\left|X_{n}\right|^{2}-\mathbb{E}\left|X_{\infty}\right|^{2}-C_{0}\right|}{A} \xrightarrow{n\rightarrow+\infty}0&
\end{flalign}\end{small}
Hence, (\ref{c1}) and (\ref{c2}) imply that \[\sup_{a\in\mathbb{R}^{d}}\left|e_{2,2}\big(\mu_{n}, (a,a)\big)-\sqrt{e_{2,2}^{2}\big(\mu_{\infty}, (a,a)\big)+C_{0}}\right|\xrightarrow{n\rightarrow+\infty}0.\hfill\qedhere\]\end{proof}

Let $Z:\Omega\to \mathbb{R}$ be  $\mathcal{N}(0,1)$-distributed.  We define for every $n\!\in\mathbb{N}$,
\begin{equation}\label{defcontre}
X_{n}\coloneqq e^{\frac{n}{2}Z-\frac{n^{2}}{4}}.
\end{equation}
For $n\geq1$, let $\mu_{n}$ denote the probability distribution of $X_{n}$.  It is obvious that $X_{n}$ converges a.s. to $X_{\infty}=0$, so that $\mu_{\infty}=\delta_{0}$. Moreover, for every $p>0$, $\mathbb{E}\,X_{n}^{p}=e^{\frac{pn^{2}}{8}(p-2)}$. Hence, $\mathbb{E}\,X_{n}=e^{-\frac{n^{2}}{8}}\longrightarrow0=\mathbb{E}\,X_{\infty}$ as $n\rightarrow +\infty$ so that $\mathcal{W}_{1}(\mu_{n}, \mu_{\infty})\rightarrow0$ whereas $\mathbb{E}\,X_{n}^{2}=1$ for every $n\!\in\mathbb{N}$.

Hence $\mathbb{E}X_{n}^{2}$ does not converge to $\mathbb{E}\,X_{\infty}^{2}=0$, which entails that $\mu_{n}$ does not converge to $\mu_{\infty}$ for the Wasserstein distance $\mathcal{W}_{2}$ and thus $\mu_{n}$ is not a $\mathcal{W}_{2}$-Cauchy sequence. We first prove $(\mu_{n})_{n\geq1}$ is a Cauchy sequence in $\big(\mathcal{P}_{2}(\mathbb{R}), \mathcal{Q}_{2, 2}\big)$. The proof relies on the following three lemmas. 

\begin{lem}\label{normc}
Let $Z:\Omega\to \mathbb{R}$ be  $\mathcal{N}(0,1)$-distributed. Then, $\forall z>0$, $\mathbb{P}(Z\geq z)\leq\frac{e^{-\frac{z^{2}}{2}}}{z\sqrt{2\pi}}$.
\end{lem}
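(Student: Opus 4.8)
This is the classical Mills-ratio tail bound for the standard normal, and the argument is a one-line domination trick. The plan is to write, for $z>0$,
\[
\mathbb{P}(Z\geq z)=\frac{1}{\sqrt{2\pi}}\int_{z}^{+\infty}e^{-\frac{t^{2}}{2}}\,dt,
\]
and then exploit that on the domain of integration we have $t\geq z>0$, hence $1\leq \frac{t}{z}$. Substituting this bound into the integrand gives
\[
\mathbb{P}(Z\geq z)\leq\frac{1}{\sqrt{2\pi}}\int_{z}^{+\infty}\frac{t}{z}\,e^{-\frac{t^{2}}{2}}\,dt=\frac{1}{z\sqrt{2\pi}}\int_{z}^{+\infty}t\,e^{-\frac{t^{2}}{2}}\,dt.
\]
The remaining integral is elementary, since $-e^{-t^{2}/2}$ is an antiderivative of $t\,e^{-t^{2}/2}$; evaluating it yields $\int_{z}^{+\infty}t\,e^{-t^{2}/2}\,dt=e^{-z^{2}/2}$, and therefore $\mathbb{P}(Z\geq z)\leq\frac{e^{-z^{2}/2}}{z\sqrt{2\pi}}$, which is the claim.

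There is essentially no obstacle here: the only point requiring a (trivial) justification is that all the integrals converge, which is immediate from the Gaussian decay, so Tonelli/Fubini-type concerns do not arise and the monotone substitution $1\leq t/z$ is valid pointwise on $[z,+\infty)$. I would state the lemma's proof in exactly the three displayed lines above, with a half-sentence noting that $t\geq z>0$ on the integration range. No earlier result from the paper is needed.
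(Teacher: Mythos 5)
Your proof is correct and is exactly the argument the paper uses: bound the integrand by $\frac{t}{z}e^{-t^{2}/2}$ on $[z,+\infty)$ and integrate the resulting exact derivative. Nothing to add.
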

\vspace{-0.7cm}
\begin{proof}
$\mathbb{P}(Z\geq z)=\int_{z}^{+\infty}\frac{1}{\sqrt{2\pi}}e^{-\frac{x^{2}}{2}}dx\leq\int_{z}^{+\infty}\frac{x}{z}\frac{1}{\sqrt{2\pi}}e^{-\frac{x^{2}}{2}}dx =\frac{e^{-\frac{z^{2}}{2}}}{z\sqrt{2\pi}}.$
\end{proof}

\begin{lem} Define $(X_{n})$ as in (\ref{defcontre}), then $\sup_{K\geq0} K\,\mathbb{E}(X_{n}-K)_{+}\rightarrow0$ as $n\rightarrow+\infty$.
\end{lem}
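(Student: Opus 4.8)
The plan is to obtain an exact closed form for $K\,\mathbb{E}(X_n-K)_+$ and then exploit a cancellation that the naive majorations destroy. Fix $K>0$ (the case $K=0$ being trivial) and set $u=\frac{2}{n}\ln K$, so that $K=e^{nu/2}$ and $\{X_n>K\}=\{Z>u+\tfrac n2\}$. Completing the square in the Gaussian integral gives $\mathbb{E}\big[X_n\mathbf 1_{\{X_n>K\}}\big]=e^{-n^2/8}\,\overline{\Phi}(u)$, where $\overline{\Phi}(z)=\mathbb{P}(Z>z)$, while $\mathbb{P}(X_n>K)=\overline{\Phi}(u+\tfrac n2)$. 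Writing $(X_n-K)_+=X_n\mathbf 1_{\{X_n>K\}}-K\mathbf 1_{\{X_n>K\}}$ and performing the shift $z=x+\tfrac n2$ inside $\overline{\Phi}(u+\tfrac n2)$, one arrives at
\[
K\,\mathbb{E}(X_n-K)_+ \;=\; e^{-n^2/8}\,e^{nu/2}\int_u^{+\infty}\frac{1}{\sqrt{2\pi}}\,e^{-x^2/2}\Big(1-e^{-\frac n2(x-u)}\Big)\,dx .
\]
This identity is the crux: the factor $1-e^{-\frac n2(x-u)}$ records the (near-)cancellation between $K\,\mathbb{E}[X_n\mathbf 1_{\{X_n>K\}}]$ and $K^2\mathbb{P}(X_n>K)$.

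Next I would extract two majorations from $0\le 1-e^{-t}\le 1$ and $0\le 1-e^{-t}\le t$ for $t\ge 0$, namely $0\le K\,\mathbb{E}(X_n-K)_+\le e^{-n^2/8}e^{nu/2}\,\overline{\Phi}(u)$ and $K\,\mathbb{E}(X_n-K)_+\le e^{-n^2/8}e^{nu/2}\,\frac n2\int_u^{+\infty}(x-u)\tfrac{1}{\sqrt{2\pi}}e^{-x^2/2}dx$. For $u>0$ the last integral equals $\tfrac{1}{\sqrt{2\pi}}e^{-u^2/2}-u\,\overline{\Phi}(u)$, and since $x+u\ge 2u$ on $[u,+\infty)$ one has the elementary bound $\int_u^{+\infty}(x-u)\tfrac1{\sqrt{2\pi}}e^{-x^2/2}dx\le \tfrac{e^{-u^2/2}}{\sqrt{2\pi}}\int_0^{+\infty} y\,e^{-uy}dy=\tfrac{1}{\sqrt{2\pi}}\,u^{-2}e^{-u^2/2}$. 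Combining this with the algebraic identity $-\tfrac{n^2}{8}+\tfrac{nu}{2}-\tfrac{u^2}{2}=-\tfrac12\big(u-\tfrac n2\big)^2$ yields, for every $u>0$,
\[
K\,\mathbb{E}(X_n-K)_+\;\le\;\frac{n}{2\sqrt{2\pi}\,u^2}\;e^{-\frac12\left(u-\frac n2\right)^2}.
\]

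Finally I would split according to the value of $u\in\mathbb{R}$ (equivalently of $K$). If $u\le 1$, the first majoration gives $K\,\mathbb{E}(X_n-K)_+\le e^{-n^2/8}e^{nu/2}\le e^{n/2-n^2/8}$, uniformly over all such $u$. If $1<u\le n/4$, then $|u-\tfrac n2|\ge n/4$ and $u^{-2}<1$, so the last display gives $K\,\mathbb{E}(X_n-K)_+\le \tfrac{n}{2\sqrt{2\pi}}e^{-n^2/32}$. If $u>n/4$, then $u^{-2}<16/n^2$ and the last display gives $K\,\mathbb{E}(X_n-K)_+\le \tfrac{8}{n\sqrt{2\pi}}$. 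Each of these three bounds is uniform in $u$ within its range and tends to $0$ as $n\to+\infty$, whence $\sup_{K\ge 0}K\,\mathbb{E}(X_n-K)_+\to 0$.

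The main obstacle is precisely the identification of this cancellation: the two obvious bounds $(X_n-K)_+\le X_n$ and $(X_n-K)_+\le X_n\mathbf 1_{\{X_n>K\}}$ are far too crude, because for $K$ of order $e^{n^2/4}$ (that is $u\approx n/2$) the small factor $e^{-n^2/8}$ is exactly undone by $e^{nu/2-u^2/2}$, so that these bounds deliver only something of order $n$, which diverges. One really has to retain the difference $K\,\mathbb{E}[X_n\mathbf 1_{\{X_n>K\}}]-K^2\mathbb{P}(X_n>K)$ and use its near-vanishing, which is exactly what the factor $1-e^{-\frac n2(x-u)}$ in the closed-form identity encodes; once that identity is in hand, the regime split above is routine.
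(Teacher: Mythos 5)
Your proof is correct. The closed-form identity checks out: completing the square gives $\mathbb{E}\big[X_n\mathbf{1}_{\{X_n>K\}}\big]=e^{-n^2/8}\,\overline{\Phi}(u)$ with $u=\tfrac2n\ln K$, and the second term of your integral identity reassembles (after another completion of the square) into $e^{nu}\,\overline{\Phi}(u+\tfrac n2)=K^2\,\mathbb{P}(X_n>K)$; the two majorations $1-e^{-t}\le \min(1,t)$, the estimate $\int_u^{+\infty}(x-u)\tfrac{1}{\sqrt{2\pi}}e^{-x^2/2}dx\le \tfrac{1}{\sqrt{2\pi}}u^{-2}e^{-u^2/2}$ and the identity $-\tfrac{n^2}{8}+\tfrac{nu}{2}-\tfrac{u^2}{2}=-\tfrac12(u-\tfrac n2)^2$ are all sound; and the three regimes $u\le 1$, $1<u\le n/4$, $u>n/4$ cover all of $\mathbb{R}$ with uniform bounds $e^{n/2-n^2/8}$, $\tfrac{n}{2\sqrt{2\pi}}e^{-n^2/32}$ and $\tfrac{8}{n\sqrt{2\pi}}$, each vanishing as $n\to+\infty$. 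The paper's route is different in mechanism but identical in spirit: it uses the layer-cake representation $K\,\mathbb{E}(X_n-K)_+=K\int_K^{+\infty}\mathbb{P}(X_n\ge v)\,dv$, substitutes $v=e^{w}$, applies the Mills-ratio bound $\mathbb{P}(Z\ge z)\le e^{-z^2/2}/(z\sqrt{2\pi})$ twice (once to the integrand, once to the resulting Gaussian tail), and then also splits into three regimes of $K$ (with thresholds $e^{\rho n^2/4}$ and $e^{n^2/4}$). Your exact two-term Gaussian computation replaces the integration of a tail bound; both approaches land on bounds of order $1/n$ in the critical regime $K\asymp e^{n^2/4}$, so neither buys a better rate, but yours avoids carrying the slightly awkward prefactor $\big((1+\tfrac{4}{n^2}\ln K)\ln K\big)^{-1}$ that the paper has to control.

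One remark on your diagnosis of the difficulty: the cancellation you present as the crux is in fact not needed, and the paper does not use it either. The bound $K\,\mathbb{E}(X_n-K)_+\le K\,\mathbb{E}\big[X_n\mathbf{1}_{\{X_n>K\}}\big]=e^{nu/2-n^2/8}\,\overline{\Phi}(u)$ already tends to $0$ uniformly in $u$: for $u\le 1$ use $\overline{\Phi}(u)\le 1$ to get $e^{n/2-n^2/8}$, while for $u>1$ the Mills bound gives $\tfrac{1}{u\sqrt{2\pi}}e^{-\frac12(u-n/2)^2}$, which is at most $\tfrac{1}{\sqrt{2\pi}}e^{-n^2/32}$ when $u\le n/4$ and at most $\tfrac{4}{n\sqrt{2\pi}}$ when $u>n/4$ --- of order $1/n$, not of order $n$ as you claim. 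Only the bound $(X_n-K)_+\le X_n$ is genuinely too crude (it gives $Ke^{-n^2/8}$, unbounded in $K$); retaining the indicator already suffices. This misjudgement does not affect your argument, which is complete and correct as written.
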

\begin{proof} We have 
\begin{align*}
&K\,\mathbb{E}(X_{n}-K)_{+}=K\int_{0}^{\infty}\mathbb{P}\Big((X_{n}-K)_{+}\geq u\Big)du=K\int_{0}^{+\infty}\mathbb{P}(X_{n}>u+K)du\nonumber\\
&\hspace{0.3cm}=K\int_{K}^{+\infty}\mathbb{P}(X_{n}\geq v)dv=K\int_{K}^{+\infty}\mathbb{P}\Big(e^{\frac{n}{2}Z-\frac{n^{2}}{4}}\geq v\Big)dv\\
&\hspace{0.3cm}=K\int_{K}^{+\infty}\mathbb{P}\Big(Z\geq\frac{n}{2}+\frac{2}{n}\ln v\Big)dv=K\int_{\ln K}^{\infty}\mathbb{P}\Big(Z\geq\frac{n}{2}+\frac{2}{n}u\Big)e^{u}du \;\;\text{(setting $u=\ln v$)}.\nonumber
\end{align*}
By Lemma~\ref{normc}, 
$\mathbb{P}\Big(Z\geq\frac{n}{2}+\frac{2}{n}u\Big)\leq \frac{1}{\sqrt{2\pi}}\frac{e^{-\frac{1}{2}(\frac{n}{2}+\frac{2}{n}u)^{2}}}{\frac{n}{2}+\frac{2}{n}u}=\frac{1}{\sqrt{2\pi}}\frac{e^{-\frac{n^{2}}{8}-\frac{2}{n^{2}}u^{2}-u}}{\frac{n}{2}+\frac{2}{n}u}.$
It follows that,
\begin{align}\label{KK}
&K\,\mathbb{E}(X_{n}-K)_{+}\leq K\int_{\ln K}^{\infty}\frac{e^{-\frac{n^{2}}{8}-\frac{2}{n^{2}}u^{2}-u}}{\frac{n}{2}+\frac{2}{n}u}e^{u}\frac{du}{\sqrt{2\pi}}\leq \frac{Ke^{-\frac{n^{2}}{8}}}{\frac{n}{2}+\frac{2}{n}\ln K}\int_{\ln K}^{\infty}e^{-\frac{2}{n^{2}}u^{2}}\frac{du}{\sqrt{2\pi}}\nonumber\\
&\hspace{0.3cm}=\frac{Ke^{-\frac{n^{2}}{8}}}{\frac{n}{2}+\frac{2}{n}\ln K}\int_{\frac{2}{n}\ln K}^{\infty}e^{-\frac{w^{2}}{2}}\frac{n}{2}\frac{dw}{\sqrt{2\pi}}\;\;\text{(by setting $w=\frac{2}{n}u$)}\nonumber\\
&\hspace{0.3cm}=\frac{Ke^{-\frac{n^{2}}{8}}}{\frac{n}{2}+\frac{2}{n}\ln K}\frac{n}{2}\;\mathbb{P}\Big(Z\geq \frac{2}{n}\ln K\Big)\leq \frac{nKe^{-\frac{n^{2}}{8}}}{2(\frac{n}{2}+\frac{2}{n}\ln K)}\frac{e^{-\frac{1}{2}\frac{4}{n^{2}}(\ln K)^{2}}}{\sqrt{2\pi}\frac{2}{n}\ln K}\;\;\text{(by Lemma~\ref{normc})}\nonumber\\
&\hspace{0.3cm}=\frac{n}{2\sqrt{2\pi}}e^{-\frac{n^{2}}{8}}\frac{K e^{-\frac{2}{n^{2}}(\ln K)^{2}}}{(1+\frac{4}{n^{2}}\ln K)\ln K}=\frac{n}{2\sqrt{2\pi}}e^{-\frac{n^{2}}{8}}\frac{ e^{\ln K(1-\frac{2}{n^{2}}\ln K)}}{(1+\frac{4}{n^{2}}\ln K)\ln K}.
\end{align}
Since the function $u\mapsto u(1-\frac{2}{n^{2}}u)$ attains its maximum at $u=\frac{n^{2}}{4}$ with  maximum value  $\frac{n^{2}}{8}$, we will discuss the value of $K\,\mathbb{E}(X_{n}-K)_{+}$ in the following three cases:
\begin{center}
\noindent $(i)$ $K\geq e^{\frac{n^{2}}{4}}$,
$\qquad (ii)$ $e^{\rho\frac{n^{2}}{4}}\leq K\leq e^{\frac{n^{2}}{4}}$,
$\qquad (iii)$ $0\leq K\leq e^{\rho \frac{n^{2}}{4}}$ ,
\end{center}
\noindent with the same fixed  $\rho\in(0,\frac{1}{2})$  in both $(ii)$ and $(iii)$. 

\smallskip
\noindent \textbf{Case (i)}: $K\geq e^{\frac{n^{2}}{4}}$, then $\ln K\geq \frac{n^{2}}{4}$. It follows that 
\begin{align}
K\,\mathbb{E}(X_{n}-K)_{+}&\leq\frac{n}{2\sqrt{2\pi}}e^{-\frac{n^{2}}{8}}\frac{ e^{\ln K(1-\frac{2}{n^{2}}\ln K)}}{(1+\frac{4}{n^{2}}\ln K)\ln K}\leq\frac{n}{2\sqrt{2\pi}}e^{-\frac{n^{2}}{8}}\frac{ e^{\frac{n^{2}}{8}}}{(1+\frac{4}{n^{2}}\times\frac{n^{2}}{4})\frac{n^{2}}{4}}=\frac{1}{n\sqrt{2\pi}}\rightarrow0.\nonumber
\end{align}

\noindent\textbf{Case (ii)}: $e^{\rho\frac{n^{2}}{4}}\leq K\leq e^{\frac{n^{2}}{4}}$ with a fixed $\rho\in(0,\frac{1}{2})$, then $\rho\frac{n^{2}}{4}\leq \ln K\leq \frac{n^{2}}{4}$. It follows that
\begin{align}
K\,\mathbb{E}(X_{n}-K)_{+}&\leq\frac{ne^{-\frac{n^{2}}{8}}}{2\sqrt{2\pi}}\frac{ e^{\ln K(1-\frac{2}{n^{2}}\ln K)}}{(1+\frac{4}{n^{2}}\ln K)\ln K}\leq\frac{ne^{-\frac{n^{2}}{8}}}{2\sqrt{2\pi}}\frac{ e^{\frac{n^{2}}{8}}}{(1+\frac{4}{n^{2}}\times\rho\frac{n^{2}}{4})\rho\frac{n^{2}}{4}}=\frac{2}{n(1+\rho)\rho\sqrt{2\pi}}\rightarrow0.\nonumber
\end{align}

\noindent \textbf{Case (iii)}: $0\leq K\leq e^{\rho \frac{n^{2}}{4}}$ with the same $\rho\in(0,\frac{1}{2})$ as in the situation (ii), then 
\[
K\,\mathbb{E}(X_{n}-K)_{+}\leq e^{\frac{\rho}{4}n^{2}}\mathbb{E}X_{n}=e^{\frac{\rho}{4}n^{2}}\cdot e^{-\frac{n^{2}}{8}}=e^{\frac 14(\rho-\frac{1}{2})n^{2}}\xrightarrow{n\rightarrow+\infty}0.
\]
Therefore, $\sup_{K>0} \;K\,\mathbb{E}(X_{n}-K)_{+}\xrightarrow{n\rightarrow+\infty}0$.
\end{proof}

By Lemma~\ref{cauchyunif}, $\sup_{a\in\mathbb{R}^{d}}\left|e_{2,2}\big(\mu_{n}, (a,a)\big)-\sqrt{e_{2,2}^{2}\big(\mu_{\infty}, (a,a)\big)+C_{0}}\right|\xrightarrow{n\rightarrow+\infty}0$. Consequently, it is reasonable to guess that $e_{N,2}(\mu_{n}, \cdot)\xrightarrow[\;\;n\rightarrow+\infty\;\;]{\left\Vert\cdot\right\Vert_{\sup}}\sqrt{e_{N,2}^{2}(\mu_{\infty}, \cdot)+1}$ so that $(\mu_{n})_{n\in\mathbb{N}}$ is a Cauchy sequence in $(\mathcal{P}_{2}(\mathbb{R}^{d}), \mathcal{Q}_{N,2})$.
Let $g_{N}: \mathbb{R}^N\to \mathbb{R}_{+}$ be  defined   by
\begin{equation}
(a_{1}, \dots, a_{N})  \mapsto g_{N}\big((a_{1}, \dots, a_{N})\big)\coloneqq \sqrt{e_{N,2}^{2}\big(\mu_{\infty}, (a_{1}, \dots, a_{N})\big)+1}=\sqrt{\min_{1\leq i\leq N}\left|a_{i}\right|^{2}+1}.\nonumber
\end{equation}

\begin{prop}\label{contrelevelN}
For  every $N\geq2$,
\[\sup_{(a_{1},\dots, a_{N})\in\mathbb{R}^{N}}\big{|}e_{N,2}\big(\mu_{n}, (a_{1},\dots, a_{N})\big)-g_{N}\big((a_{1},\dots, a_{N})\big)\big{|}\xrightarrow{n\rightarrow+\infty}0.\] Therefore, $(\mu_{n})_{n\in\mathbb{N}}$ is a Cauchy sequence in $(\mathcal{P}_{2}(\mathbb{R}), \mathcal{Q}_{N,2})$ by the definition of $\mathcal{Q}_{N,2}$.
\end{prop}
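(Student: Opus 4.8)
The plan is to reduce the $N$-point statement to the two-point estimate already obtained in Lemma~\ref{cauchyunif} together with the tail control $\sup_{K\ge 0}K\,\mathbb{E}(X_n-K)_+\to 0$. Fix $(a_1,\dots,a_N)\in\mathbb{R}^N$ and let $a_{i_0}$ be an index achieving $\min_{1\le i\le N}|a_i|$. The key observation is that $g_N\big((a_1,\dots,a_N)\big)=\sqrt{|a_{i_0}|^2+1}=\sqrt{e_{2,2}^2\big(\mu_\infty,(a_{i_0},a_{i_0})\big)+1}$ since $\mu_\infty=\delta_0$, so the target function at $(a_1,\dots,a_N)$ only ``sees'' the closest point. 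First I would use the trivial sandwich
\[
e_{2,2}\big(\mu_n,(a_{i_0},a_{i_0})\big)^2 - \varepsilon_n(a)\ \le\ e_{N,2}\big(\mu_n,(a_1,\dots,a_N)\big)^2\ \le\ e_{2,2}\big(\mu_n,(a_{i_0},a_{i_0})\big)^2,
\]
where the right inequality is immediate because $\min_{1\le i\le N}|\xi-a_i|^2\le|\xi-a_{i_0}|^2$, and the left inequality measures the defect $\varepsilon_n(a)=\int_{\mathbb R}\big(|\xi-a_{i_0}|^2-\min_{1\le i\le N}|\xi-a_i|^2\big)\mu_n(d\xi)\ge 0$. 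The whole game is to show $\sup_{a}\varepsilon_n(a)\to 0$; once that is done, combining with Lemma~\ref{cauchyunif} applied at the point $(a_{i_0},a_{i_0})$ and with the elementary inequality $|\sqrt{u}-\sqrt{v}|\le\sqrt{|u-v|}$ (valid for $u,v\ge0$) yields the uniform convergence, because $e_{N,2}(\mu_n,\cdot)$ and $g_N$ are both $1$-Lipschitz and the defect is uniformly small.

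The main obstacle is the uniform bound on $\varepsilon_n(a)$. I would argue as follows. The integrand $|\xi-a_{i_0}|^2-\min_{i}|\xi-a_i|^2$ is nonzero only on the Voronoi cells $V_{a_j}$ with $j\ne i_0$, and on such a cell it equals $|\xi-a_{i_0}|^2-|\xi-a_j|^2$, a quantity that is bounded (in absolute value) by a sublinear function of $\xi$; more precisely, for $\xi$ in the cell of $a_j$ one has $|\xi-a_j|\le|\xi-a_{i_0}|$ and hence $|\xi-a_{i_0}|^2-|\xi-a_j|^2\le 2|\xi-a_{i_0}|\cdot|a_{i_0}-a_j|$ while simultaneously this cell is contained in $\{|\xi-a_j|\le|\xi-a_{i_0}|\}$, which (since $|a_j|\ge|a_{i_0}|$ is \emph{not} assumed — only $|a_{i_0}|$ is minimal) forces $\xi$ to lie in a half-space far from $a_{i_0}$ when $|a_{i_0}-a_j|$ is large. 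The cleanest route is dimension $d=1$: there, for any second point $a_j$, the set where the $j$-th term wins is a half-line $\{\xi\ge (a_{i_0}+a_j)/2\}$ or $\{\xi\le (a_{i_0}+a_j)/2\}$ whose distance to $a_{i_0}$ is $\tfrac12|a_{i_0}-a_j|$, and on it $|\xi-a_{i_0}|^2-|\xi-a_j|^2=(a_j-a_{i_0})(2\xi-a_{i_0}-a_j)\le 2|a_j-a_{i_0}|\,|\xi-a_{i_0}|$. Writing $X_n$ for a $\mu_n$-distributed variable, $t:=\tfrac12|a_j-a_{i_0}|$ and $s:=|a_{i_0}|$, one gets a bound of the form $\varepsilon_n(a)\le C\,t\,\mathbb{E}\big(|X_n-a_{i_0}|\,\mathbbm 1_{|X_n-a_{i_0}|\ge t}\big)$ summed over the (at most $N-1$) competitors; each such term is controlled, uniformly in the location of $a_{i_0}$, by $\sup_{K\ge 0}K\,\mathbb{E}(X_n-K)_+$ after translating and using $X_n\ge 0$ together with $\mathbb{E}X_n\to 0$ and $\mathbb{E}X_n^2=1$. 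Thus the tail lemma proved just above is exactly what is needed.

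To assemble the proof I would: (1) fix $a=(a_1,\dots,a_N)$, pick $i_0$ with $|a_{i_0}|=\min_i|a_i|$, and record $g_N(a)=\sqrt{e_{2,2}^2(\mu_\infty,(a_{i_0},a_{i_0}))+1}$ with $\mu_\infty=\delta_0$; (2) bound $0\le e_{2,2}^2(\mu_n,(a_{i_0},a_{i_0}))-e_{N,2}^2(\mu_n,a)=\varepsilon_n(a)$ and, by the half-line decomposition above plus the tail lemma, show $\sup_a\varepsilon_n(a)\xrightarrow{n}0$; (3) invoke Lemma~\ref{cauchyunif} (with $C_0=\lim_n\mathbb{E}X_n^2-\mathbb{E}X_\infty^2=1$) to get $\sup_a\big|e_{2,2}(\mu_n,(a_{i_0},a_{i_0}))-\sqrt{e_{2,2}^2(\mu_\infty,(a_{i_0},a_{i_0}))+1}\big|\to 0$, noting the supremum over $a_{i_0}\in\mathbb{R}$ is the same as in \eqref{unifcauchy}; (4) combine via $|\sqrt{u}-\sqrt v|\le\sqrt{|u-v|}$ and the triangle inequality:
\[
\big|e_{N,2}(\mu_n,a)-g_N(a)\big|\ \le\ \sqrt{\varepsilon_n(a)}\ +\ \big|e_{2,2}(\mu_n,(a_{i_0},a_{i_0}))-g_N(a)\big|,
\]
and take the supremum over $a\in\mathbb{R}^N$. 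Finally, $\sup_a|e_{N,2}(\mu_n,a)-g_N(a)|\to0$ immediately gives that $\big(e_{N,2}(\mu_n,\cdot)\big)_n$ is Cauchy in $\big(\mathcal{C}_b(\mathbb{R}^N,\mathbb{R}),\|\cdot\|_{\sup}\big)$ up to the common reference function $e_{N,2}(\delta_0,\cdot)$, i.e.\ $(\mu_n)_{n\in\mathbb N}$ is $\mathcal{Q}_{N,2}$-Cauchy, which is the assertion. The anticipated sticking point is purely the uniform-in-$a$ tail estimate of step (2); everything else is bookkeeping with $1$-Lipschitz functions and the already-proved lemmas.
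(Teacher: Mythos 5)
Your reduction to the minimal--modulus point $a_{i_0}$ and the sandwich
$e_{2,2}^2\big(\mu_n,(a_{i_0},a_{i_0})\big)-\varepsilon_n(a)\le e_{N,2}^2(\mu_n,a)\le e_{2,2}^2\big(\mu_n,(a_{i_0},a_{i_0})\big)$
are fine, but the pivotal claim $\sup_a \varepsilon_n(a)\to 0$ is false, and the step where you convert it into a bound on the square roots via $|\sqrt u-\sqrt v|\le\sqrt{|u-v|}$ cannot be repaired. Concretely, take $N=2$ and $a=(-M,M)$ with $M>0$, so $a_{i_0}=-M$. Since $X_n>0$ a.s., the competitor's cell $\{\xi\ge 0\}$ carries \emph{all} of the mass of $\mu_n$, and
\[
\varepsilon_n(a)=\int_{\mathbb{R}}\big(|\xi+M|^2-|\xi-M|^2\big)_+\,\mu_n(d\xi)=4M\,\mathbb{E}X_n=4M\,e^{-n^2/8},
\]
which is unbounded in $M$ for every fixed $n$; hence $\sup_a\varepsilon_n(a)=+\infty$ and $\sup_a\sqrt{\varepsilon_n(a)}=+\infty$. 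The geometric heuristic in your middle paragraph (``the competitor's half-space is far from $a_{i_0}$'') misses exactly this regime: when $a_{i_0}<0<a_j$ with $|a_{i_0}|$ and $|a_j|$ large and comparable, the half-line $\{\xi\ge\tfrac{a_{i_0}+a_j}{2}\}$ sits near the origin, i.e.\ right on top of the support of $\mu_n$, so the tail bound $\sup_{K\ge 0}K\,\mathbb{E}(X_n-K)_+$ gives you nothing (the relevant $K$ is close to $0$ while the prefactor $|a_j-a_{i_0}|$ is huge).

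The statement is nevertheless true because in that bad regime the difference of the square roots, not of the squares, is small: one must write $|\sqrt u-\sqrt v|=|u-v|/(\sqrt u+\sqrt v)$ and bound the denominator from below by $\big\Vert X_n-a_{i_0}\big\Vert_2=\big(\mathbb{E}X_n^2-2a_{i_0}\mathbb{E}X_n+|a_{i_0}|^2\big)^{1/2}\ge\sqrt{1+|a_{i_0}|^2}$ (using $\mathbb{E}X_n^2=1$ and $a_{i_0}\le 0$), which absorbs the linear growth of $\varepsilon_n(a)$ in $|a_j|$ and yields a bound of order $\mathbb{E}X_n\to 0$. This is precisely what the paper's cases $(ii,\beta)$ and $(iii,\beta)$ do; the paper also organizes the argument as an induction on $N$ with a sign/size case analysis rather than your one-shot reduction, but the essential ingredient your proposal lacks is this ratio estimate for opposite-sign, comparable-modulus pairs. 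If you split your step (2) into the sub-case $\tfrac{a_{i_0}+a_j}{2}\gtrsim |a_j|$ (handled by the tail lemma as you describe) and the sub-case $a_{i_0}<0<a_j$ with $\tfrac12|a_j|\le|a_{i_0}|\le|a_j|$ (handled by the ratio estimate), your outline can be completed.
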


\begin{proof} We proceed  by induction. 

\noindent \underline{$\rhd$ $N=2$.} Since the functions $g_{2}$ and $e_{2,2}(\mu_{n}, \cdot)$ are symmetric, it is only necessary to show that 
$\sup_{(a,b)\in\mathbb{R}^{2},\,\left|a\right|\leq\left|b\right|}\big|e_{2,2}\big(\mu_{n}, (a,b)\big)-g_{2}(a,b)\big|\xrightarrow{n\rightarrow+\infty}0$.
Note that when $\left|a\right|\leq\left|b\right|$, $g_{2}(a,b)=\sqrt{\left|a\right|^{2}+1}=g_{2}(a,a)$. We discuss now the value of $\big|e_{2,2}\big(\mu_{n}, (a,b)\big)-g_{2}(a,b)\big|$ in the following four cases, 

$(i)$ $0\leq a\leq b$,\hspace{1cm}$(ii)$ $a\leq 0\leq b$, 
	$\displaystyle \left\{
	\begin{array}{ll}
			(ii,\alpha) & a\leq 0\leq b \hbox{ with }\left|a\right|\leq\frac{1}{2}\left|b\right|\\
		 	(ii,\beta) &  a\leq 0\leq b \hbox{ with } \frac{1}{2}\left|b\right|\leq\left|a\right|\leq\left|b\right|,
	\end{array}\right.$

$(iii)$ $b\leq 0\leq a$, with $\left|a\right|\leq\left|b\right|$, \hspace{1cm} $(iv)$ $b\leq a\leq 0$.

\smallskip

\noindent \textbf{Cases $(iii)$ and $(iv)$: $\mathbf{b<0}$ and $\mathbf{\frac{a+b}{2}<0}$. } The random variables $X_{n}$ are  positive so that $\left|x-a\right|\leq\left|x-b\right|$. Hence $e_{2,2}\big(\mu_{n}, (a,b)\big)=e_{2,2}\big(\mu_{n}, (a,a)\big)$. With a slight abuse of notation, we will write in what follows $(a,b)\in(iii)$ for  $(a,b)\!\in\{(a,b)\!\in\mathbb{R}^{2}\mid b\leq 0\leq a, \text{and} \left|a\right|\leq\left|b\right|\}$. We will adopt the same notation for other cases too. Then for the case $(iii)$ and $(iv)$, it is obvious  by applying Lemma~\ref{cauchyunif} that
\[\sup_{(a,b)\in(iii)\cup(iv)}\big|e_{2,2}\big(\mu_{n}, (a,b)\big)-g_{2}(a,b)\big|=\sup_{(a,b)\in(iii)\cup(iv)}\big|e_{2,2}\big(\mu_{n}, (a,a)\big)-g_{2}(a,a)\big|\xrightarrow{n\rightarrow+\infty}0.
\]
 
\noindent \textbf{Case $(i)$: $\mathbf{0\leq a\leq b}$.} We have
\begin{flalign}
&\sup_{(a,b)\in (i)}\big|e_{2,2}\big(\mu_{n}, (a,b)\big)-g_{2}(a,b)\big|&&\nonumber\\
&\hspace{0.4cm}\leq \;\sup_{(a,b)\in(i)}\big|e_{2,2}\big(\mu_{n}, (a,b)\big)-e_{2,2}\big(\mu_{n}, (a,a)\big)\big|+\big|e_{2,2}\big(\mu_{n}, (a,a)\big)-g_{2}(a,a)\big|&&\nonumber\\
&\hspace{0.4cm}\leq \;\sup_{(a,b)\in(i)}\left|\sqrt{\int_{\mathbb{R}}\left|\xi-a\right|^{2}\wedge\left|\xi-b\right|^{2}\mu_{n}(d\xi)}-\sqrt{\int_{\mathbb{R}}\left|\xi-a\right|^{2}\mu_{n}(d\xi)}\right|+\big|e_{2,2}\big(\mu_{n}, (a,a)\big)-g_{2}(a,a)\big|&&\nonumber\\
&\hspace{0.4cm}\leq \;\sup_{(a,b)\in(i)}\sqrt{\int_{\mathbb{R}}\Big[\left|\xi-a\right|^{2}-\big(\left|\xi-a\right|^{2}\wedge\left|\xi-b\right|^{2}\big)\Big]\mu_{n}(d\xi)}+\big|e_{2,2}\big(\mu_{n}, (a,a)\big)-g_{2}(a,a)\big|\;\:&&\nonumber\\
&\hspace{0.4cm}\;\text{  \small(since $\left|\sqrt{\alpha}-\sqrt{\beta}\right|\leq\sqrt{\beta-\alpha}$ for $\beta>\alpha>0$)}&&\nonumber\\
&\hspace{0.4cm}\leq\;\sup_{(a,b)\in(i)}\sqrt{\int_{\mathbb{R}}\Big(\left|\xi-a\right|^{2}-\left|\xi-b\right|^{2}\Big)_{+}\mu_{n}(d\xi)}+\big|e_{2,2}\big(\mu_{n}, (a,a)\big)-g_{2}(a,a)\big|&&\nonumber\\
&\hspace{0.4cm}\leq\;\sup_{(a,b)\in(i)}\sqrt{\int_{\mathbb{R}}2(b-a)\Big(\xi-\frac{b+a}{2}\Big)_{+}\mu_{n}(d\xi)}+\big|e_{2,2}\big(\mu_{n}, (a,a)\big)-g_{2}(a,a)\big|&&\nonumber\\
&\hspace{0.4cm}\leq\;\sup_{(a,b)\in(i)}2\sqrt{\int_{\mathbb{R}}\frac{b}{2}\Big(\xi-\frac{b}{2}\Big)_{+}\mu_{n}(d\xi)}+\big|e_{2,2}\big(\mu_{n}, (a,a)\big)-g_{2}(a,a)\big|&&\nonumber\\
&\hspace{0.4cm}\leq\;2\sqrt{\sup_{K\geq0} K\,\mathbb{E}(X_{n}-K)_{+}}+\sup_{a\in\mathbb{R}}\big|e_{2,2}\big(\mu_{n}, (a,a)\big)-g_{2}(a,a)\big|\xrightarrow{n\rightarrow+\infty}0.&&\nonumber
\end{flalign}

\item\textbf{Case (ii,$\alpha$): $\mathbf{a\leq 0\leq b}$, with $\mathbf{\left|a\right|\leq\frac{1}{2}\left|b\right|}$.}  We have
\begin{flalign}
\sup_{(a,b)\in (ii,\alpha)}&\big|e_{2,2}\big(\mu_{n}, (a,b)\big)-g_{2}(a,b)\big|&&\nonumber\\
&\leq \;\sup_{(a,b)\in(ii,\alpha)}\big|e_{2,2}\big(\mu_{n}, (a,b)\big)-e_{2,2}\big(\mu_{n}, (a,a)\big)\big|+\big|e_{2,2}\big(\mu_{n}, (a,a)\big)-g_{2}(a,a)\big|&&\nonumber\\
&\leq\;\sup_{(a,b)\in(ii,\alpha)}\sqrt{\int_{\mathbb{R}}2(b-a)\Big(\xi-\frac{b+a}{2}\Big)_{+}\mu_{n}(d\xi)}+\big|e_{2,2}\big(\mu_{n}, (a,a)\big)-g_{2}(a,a)\big|&&\nonumber\\
&\leq\;\sup_{(a,b)\in(ii,\alpha)}\sqrt{\int_{\mathbb{R}}3\cdot b\Big(\xi-\frac{b}{4}\Big)_{+}\mu_{n}(d\xi)}+\big|e_{2,2}\big(\mu_{n}, (a,a)\big)-g_{2}(a,a)\big|&&\nonumber\\
&\leq\;2\sqrt{3}\cdot\sqrt{\sup_{K\geq0} K\,\mathbb{E}(X_{n}-K)_{+}}+\sup_{a\in\mathbb{R}}\big|e_{2,2}\big(\mu_{n}, (a,a)\big)-g_{2}(a,a)\big|\xrightarrow{n\rightarrow+\infty}0.&&\nonumber
\end{flalign}

\vskip-0.2cm
\noindent \textbf{Case (ii,$\beta$): $\mathbf{a\leq 0\leq b}$, with $\mathbf{\frac{1}{2}\left|b\right|\leq\left|a\right|\leq\left|b\right|}$.} One has
\begin{flalign}
\;\sup_{(a,b)\in (ii,\beta)}&\big|e_{2,2}\big(\mu_{n}, (a,b)\big)-g_{2}(a,b)\big|&&\nonumber\\
&\leq \;\sup_{(a,b)\in(ii,\beta)}\big|e_{2,2}\big(\mu_{n}, (a,b)\big)-e_{2,2}\big(\mu_{n}, (a,a)\big)\big|+\big|e_{2,2}\big(\mu_{n}, (a,a)\big)-g_{2}(a,a)\big|&&\nonumber\\
&\leq\;\sup_{(a,b)\in(ii,\beta)}\frac{\big|e_{2,2}^{2}\big(\mu_{n}, (a,b)\big)-e_{2,2}^{2}\big(\mu_{n}, (a,a)\big)\big|}{e_{2,2}\big(\mu_{n}, (a,b)\big)+e_{2,2}\big(\mu_{n}, (a,a)\big)}+\big|e_{2,2}\big(\mu_{n}, (a,a)\big)-g_{2}(a,a)\big|&&\nonumber\\
&\leq\;\sup_{(a,b)\in(ii,\beta)}\frac{\int_{\mathbb{R}}2(b-a)\big(\xi-\frac{b+a}{2}\big)_{+}\mu_{n}(d\xi)}{\left\Vert X_{n}-a\right\Vert_{2}}+\sup_{a\in\mathbb{R}}\big|e_{2,2}\big(\mu_{n}, (a,a)\big)-g_{2}(a,a)\big|&&\nonumber\\
&\leq\;\sup_{(a,b)\in(ii,\beta)}\frac{2(b-a)\,\mathbb{E}\,\big(X_{n}-\frac{b+a}{2}\big)_{+}}{\left\Vert X_{n}-a\right\Vert_{2}}+\sup_{a\in\mathbb{R}}\big|e_{2,2}\big(\mu_{n}, (a,a)\big)-g_{2}(a,a)\big|.&&\nonumber
\end{flalign}
As $\left\Vert X_{n}-a\right\Vert_{2}=\big(\underset{=1}{\underbrace{\mathbb{E}X_{n}^{2}}}\;\underset{\geq0}{\underbrace{-2a\,\mathbb{E}X_{n}}}+\left|a\right|^{2}\big)^{1/2} \geq \sqrt{1+\left|a\right|^{2}}$, we have 
\begin{flalign}\label{situ32}
\sup_{(a,b)\in (ii,\beta)}&\big|e_{2,2}\big(\mu_{n}, (a,b)\big)-g_{2}(a,b)\big|&&\nonumber\\
&\leq\;\sup_{(a,b)\in(ii,\beta)}\frac{2(b+\left|a\right|)\mathbb{E}\big[X_{n}-\frac{b+a}{2}\big]_{+}}{\sqrt{1+\left|a\right|^{2}}}+\sup_{a\in\mathbb{R}}\big|e_{2,2}\big(\mu_{n}, (a,a)\big)-g_{2}(a,a)\big|&&\nonumber\\
&\leq\;\sup_{(a,b)\in(ii,\beta)}\frac{4b\,\mathbb{E}X_{n}}{\sqrt{1+\frac{b^{2}}{4}}}+\sup_{a\in\mathbb{R}}\big|e_{2,2}\big(\mu_{n}, (a,a)\big)-g_{2}(a,a)\big|.&&\nonumber\\
&\leq\;8\,\mathbb{E}X_{n}+\sup_{a\in\mathbb{R}}\big|e_{2,2}\big(\mu_{n}, (a,a)\big)-g_{2}(a,a)\big|\xrightarrow{n\rightarrow+\infty}0.&&\nonumber
\end{flalign}

\vspace{-0.3cm}
\noindent \underline{$\rhd$ \textbf{From $N$ to $N\!+\!1$.}} Assume now that 
{\small
%$$
$\displaystyle \sup_{(a_{1},\dots, a_{N})\in\mathbb{R}^{N}}\!\!\!\!\!\!\big|e_{N,2}\big(\mu_{n}, (a_{1},\dots, a_{N})\big)-g_{N}(a_{1},\dots, a_{N})\big|$
}
%\xrightarrow{n\rightarrow+\infty}$.
%$$
%.\] 
 goes $0$ as $n\to +\infty$. Then, for the level $N+1$, we assume without loss of generality  that $\left|a_{1}\right|\leq\left|a_{2}\right|\leq\dots\leq\left|a_{N+1}\right|$ since $g_{N+1}$ and $e_{N,2}(\mu_{n}, \cdot)$ are symmetric. Under this assumption, 
{\small
\begin{equation}\label{gn}
g_{N+1}(a_{1},\dots, a_{N+1})=g_{2}(a_{1}, a_{1})=\sqrt{\left|a_{1}\right|^{2}+1}.
\end{equation}
}
 We discuss now the value of 
$\displaystyle\sup_{(a_{1},\dots, a_{N+1})\in\mathbb{R}^{N+1}}\!\!\!\!\!\!\!\!\!\!\! \big|e_{N+1,2}\big(\mu_{n}, (a_{1},\dots, a_{N+1})\big)-g_{N+1}(a_{1},\dots, a_{N+1})\big|$ 
in the following cases:

\smallskip
\small
\noindent $(i)$ 
 $\exists \,i \!\in\{2, \dots, N+1\}$ such that $a_{i}<0$,\hspace{1cm}$(ii)$ $0\leq a_{1}\leq a_{2} \leq \dots \leq a_{N+1}$,

	\smallskip
\noindent $(iii)$ $a_{1}\leq 0 \leq a_{2} \leq \dots \leq a_{N+1}$,$\displaystyle\left\{\begin{array}{ll} (iii,\alpha)&\mbox{\hspace{-0.2cm}$a_{1}\leq 0 \leq a_{2} \leq \dots \leq a_{N+1}$, with $\left|a_{1}\right|\leq\frac{1}{2}\left|a_{N+1}\right|$}\\
		 (iii,\beta) &\mbox{\hspace{-0.2cm}$a_{1}\leq 0 \leq a_{2} \leq \dots \leq a_{N+1}$, with $\frac{1}{2}\left|a_{N+1}\right|\leq\left|a_{1}\right|\leq\left|a_{N+1}\right|$}
		 \end{array}\right.$.
	
\smallskip
\noindent 
\normalsize\textbf{Case $(i)$:} $\mathbf{\exists\,i \!\in\{2, \dots, N+1\}\; \textbf{ such that }\; a_{i}<0}.$
For every $n\geq1$, $X_{n}$ is a.s. positive. Hence, $\left|X_{n}-a_{1}\right|\leq \left|X_{n}-a_{i}\right|$ a.s. since we assume that $\left|a_{1}\right|\leq\left|a_{2}\right|\leq\dots\leq\left|a_{N+1}\right|$. Therefore, \[e_{N+1,2}\big(\mu_{n}, (a_{1},\dots, a_{N+1})\big)=e_{N,2}\big(\mu_{n}, (a_{1},\dots,a_{i-1}, a_{i+1},\dots, a_{N+1})\big).\] It follows from (\ref{gn}) that 
\begin{align}
&\sup_{(a_{1},\dots, a_{N+1}) \in\mathbb{R}^{N+1}}\big|e_{N+1,2}\big(\mu_{n}, (a_{1},\dots, a_{N+1})\big)-g_{N+1}(a_{1},\dots, a_{N+1})\big|\nonumber\\
&=\sup_{(a_{1},\dots,a_{i-1}, a_{i+1},\dots, a_{N+1}) \in\mathbb{R}^{N}}\big|e_{N,2}\big(\mu_{n}, (a_{1},\dots,a_{i-1}, a_{i+1},\dots, a_{N+1})\big)-g_{N}(a_{1},\dots,a_{i-1}, a_{i+1},\dots, a_{N+1})\big|,\nonumber
\end{align}
which converges to 0 as $n\rightarrow +\infty$ owing to the assumption on the level $N$.

\noindent \textbf{Case $(ii)$:} $\mathbf{0\leq a_{1}\leq a_{2} \leq \dots \leq a_{N+1}}.$
\begin{align}\label{situiileveln}
\sup_{0\leq a_{1}\leq a_{2} \leq \dots \leq a_{N+1}}& \big|e_{N+1,2}\big(\mu_{n}, (a_{1},\dots, a_{N+1})\big)-g_{N+1}(a_{1},\dots, a_{N+1})\big|\nonumber\\
\leq &\sup_{0\leq a_{1}\leq a_{2} \leq \dots \leq a_{N+1}}\big|e_{N+1,2}\big(\mu_{n}, (a_{1},\dots, a_{N+1})\big)-e_{N,2}\big(\mu_{n}, (a_{1},\dots, a_{N})\big)\big|\nonumber\\
&+\sup_{0\leq a_{1}\leq a_{2} \leq \dots \leq a_{N+1}}\big|e_{N,2}\big(\mu_{n}, (a_{1},\dots, a_{N})\big)-g_{N}(a_{1},\dots, a_{N})\big|.
\end{align}
The second term on the right hand side of (\ref{situiileveln}) 
converges to 0 as $n\rightarrow +\infty$ owing to the assumption on the level $N$. 

For the first term on the right hand side of (\ref{situiileveln}), we have
\begin{align}
&\sup_{0\leq a_{1}\leq a_{2} \leq \dots \leq a_{N+1}}\big|e_{N+1,2}\big(\mu_{n}, (a_{1},\dots, a_{N+1})\big)-e_{N,2}\big(\mu_{n}, (a_{1},\dots, a_{N})\big)\big|\nonumber\\
&\hspace{0.3cm}=\sup_{0\leq a_{1}\leq a_{2} \leq \dots \leq a_{N+1}}\sqrt{\int_{\mathbb{R}}\min_{1\le i\le N}\left|\xi-a_{i}\right|^{2}\mu_{n}(d\xi)}-\sqrt{\int_{\mathbb{R}}\Big[\min_{1\le i\le N}\left|\xi-a_{}\right|^{2}\Big]\wedge\left|\xi-a_{N+1}\right|^{2}\mu_{n}(d\xi)}\nonumber\\
&\hspace{0.3cm}\leq\sup_{0\leq a_{1}\leq a_{2} \leq \dots \leq a_{N+1}}\sqrt{\int_{\mathbb{R}}\min_{1\le i\le N}\left|\xi-a_{i}\right|^{2}-\Big[\min_{1\le i\le N}\left|\xi-a_{i}\right|^{2}\Big]\wedge\left|\xi-a_{N+1}\right|^{2}\mu_{n}(d\xi)}\nonumber\\
&\hspace{0.3cm}=\sup_{0\leq a_{1}\leq a_{2} \leq \dots \leq a_{N+1}}\sqrt{\int_{\mathbb{R}}\big( \min_{1\le i\le N}\left|\xi-a_{i}\right|^{2}- \left|\xi-a_{N+1}\right|^{2}\big)_{+}\mu_{n}(d\xi)}\nonumber\\
&\hspace{0.3cm}\leq\sup_{0\leq a_{1}\leq a_{2} \leq \dots \leq a_{N+1}}\sqrt{\int_{\mathbb{R}}\big(\left|\xi-a_{1}\right|^{2}-\left|\xi-a_{N+1}\right|^{2}\big)_{+}\mu_{n}(d\xi)}\nonumber\\
&\hspace{0.3cm}=\sup_{0\leq a_{1}\leq a_{2} \leq \dots \leq a_{N+1}}\sqrt{\int_{\mathbb{R}}2(a_{N+1}-a_{1})\big(\xi-\frac{a_{1}+a_{N+1}}{2}\big)_{+}\mu_{n}(d\xi)}\nonumber\\
&\hspace{0.3cm}\leq\sup_{0\leq a_{1}\leq a_{2} \leq \dots \leq a_{N+1}}\sqrt{\int_{\mathbb{R}}2\cdot a_{N+1}\big(\xi-\frac{a_{N+1}}{2}\big)_{+}\mu_{n}(d\xi)}\leq\;2\cdot\sqrt{\sup_{K\geq0} K\,\mathbb{E}(X_{n}-K)_{+}}\xrightarrow{n\rightarrow+\infty}0. \nonumber
\end{align}

\noindent \textbf{Case $(iii,\alpha)$:} $\mathbf{a_{1}\leq 0 \leq a_{2} \leq \dots \leq a_{N+1} \;\textbf{ with }\; \left|a_{1}\right|\leq\frac{1}{2}\left|a_{N+1}\right|}.$
\begin{align}\label{situ3aleveln}
&\sup_{(a_{1},\dots, a_{N+1})\in(iii,\alpha)}\big|e_{N+1,2}\big(\mu_{n}, (a_{1},\dots, a_{N+1})\big)-g_{N+1}(a_{1},\dots, a_{N+1})\big|\nonumber\\
\leq &\sup_{(a_{1},\dots, a_{N+1})\in(iii,\alpha)}\big|e_{N+1,2}\big(\mu_{n}, (a_{1},\dots, a_{N+1})\big)-e_{N,2}\big(\mu_{n}, (a_{1},\dots, a_{N})\big)\big|\nonumber\\
&\qquad +\sup_{(a_{1},\dots, a_{N+1})\in(iii,\alpha)}\big|e_{N,2}\big(\mu_{n}, (a_{1},\dots, a_{N})\big)-g_{N}(a_{1},\dots, a_{N})\big|.
\end{align}
Like in Case $(ii)$, the second term on the right hand side of (\ref{situ3aleveln}) 
converges to 0 as $n\rightarrow +\infty$. 
For the first term of the right hand side of (\ref{situ3aleveln}), we have 
\begin{align}
&\sup_{(a_{1},\dots, a_{N+1})\in(iii,\alpha)} \big|e_{N+1,2}\big(\mu_{n}, (a_{1},\dots, a_{N+1})\big)-e_{N,2}\big(\mu_{n}, (a_{1},\dots, a_{N})\big)\big|\nonumber\\
&\hspace{0.3cm}\leq\sup_{(a_{1},\dots, a_{N+1})\in(iii,\alpha)}\sqrt{\int_{\mathbb{R}}2(a_{N+1}-a_{1})\big(\xi-\frac{a_{1}+a_{N+1}}{2}\big)_{+}\mu_{n}(d\xi)}\nonumber\\
&\hspace{0.3cm}\leq\sup_{(a_{1},\dots, a_{N+1})\in(iii,\alpha)}\sqrt{\int_{\mathbb{R}}3\cdot a_{N+1}\big(\xi-\frac{a_{N+1}}{4}\big)_{+}\mu_{n}(d\xi)}\leq2\sqrt{3}\cdot\sqrt{\sup_{K\geq0} K\,\mathbb{E}(X_{n}-K)_{+}}\longrightarrow0. \nonumber
\end{align}

\noindent\textbf{Case $(iii,\beta)$:} $\mathbf{a_{1}\leq 0 \leq a_{2} \leq \dots \leq a_{N+1} \;\textbf{ with }\; \frac{1}{2}\left|a_{N+1}\right|\leq\left|a_{1}\right|\leq\left|a_{N+1}\right|}.$

Since we assume  $\left|a_{1}\right|\leq\left|a_{2}\right|\leq\dots\leq\left|a_{N+1}\right|$, then for any $i\!\in\{2, \dots, N+1\}$, we have
$\frac{1}{2}\left|a_{i}\right|\leq\left|a_{1}\right|\leq\left|a_{i}\right|$.
It follows that 
\begin{align}\label{situ3bleveln}
\sup_{(a_{1},\dots, a_{N+1})\in(iii,\beta)} & \big|e_{N+1,2}\big(\mu_{n}, (a_{1},\dots, a_{N+1})\big)-g_{N+1}(a_{1},\dots, a_{N+1})\big|\nonumber\\
\leq &\sup_{(a_{1},\dots, a_{N+1})\in(iii,\beta)}\big|e_{N+1,2}\big(\mu_{n}, (a_{1},\dots, a_{N+1})\big)-e_{2,2}\big(\mu_{n}, (a_{1}, a_{1})\big)\big|\nonumber\\
&\hspace{0.4cm}+\sup_{a_{1}\!\in\mathbb{R}}\big|e_{2,2}\big(\mu_{n}, (a_{1}, a_{1})\big)-g_{N}(a_{1}, a_{1})\big|.
\end{align}
The second part of (\ref{situ3bleveln}), $\displaystyle\sup_{a_{1}\!\in\mathbb{R}}\big|e_{2,2}\big(\mu_{n}, (a_{1}, a_{1})\big)-g_{N}(a_{1}, a_{1})\big|$ converges to 0 as $n\rightarrow +\infty$ owing to Lemma~\ref{cauchyunif}. Then for the first part of (\ref{situ3bleveln}), we have
\begin{align}
&\sup_{(a_{1},\dots, a_{N+1})\in(iii,\beta)} \big|e_{N+1,2}\big(\mu_{n}, (a_{1},\dots, a_{N+1})\big)-e_{2,2}\big(\mu_{n}, (a_{1}, a_{1})\big)\big|\nonumber\\
&\hspace{0.6cm}=\sup_{(a_{1},\dots, a_{N+1})\in(iii,\beta)}\frac{e_{2,2}^{2}\big(\mu_{n}, (a_{1}, a_{1})\big)-e_{N+1,2}^{2}\big(\mu_{n}, (a_{1},\dots, a_{N+1})\big)}{e_{N+1,2}\big(\mu_{n}, (a_{1},\dots, a_{N+1})\big)+e_{2,2}\big(\mu_{n}, (a_{1}, a_{1})\big)}\nonumber\\
&\hspace{0.6cm}\leq\sup_{(a_{1},\dots, a_{N+1})\in(iii,\beta)}\frac{\int_{\mathbb{R}}\left|\xi-a_{1}\right|^{2}-\min_{1\le i\le N+1}\left|\xi-a_{i}\right|^{2}\mu_{n}(d\xi)}{\left\Vert X_{n}-a_{1}\right\Vert_{2}}\nonumber\\
&\hspace{0.6cm}\leq\sup_{(a_{1},\dots, a_{N+1})\in(iii,\beta)}\frac{\int_{\mathbb{R}}\big(\left|\xi-a_{1}\right|^{2}-\min_{2\le i\le N+1} \left|\xi-a_{2}\right|^{2}\big)_{+}\mu_{n}(d\xi)}{\left\Vert X_{n}-a_{1}\right\Vert_{2}}\nonumber\\
&\hspace{0.6cm}\leq\sup_{(a_{1},\dots, a_{N+1})\in(iii,\beta)}\frac{1}{\left\Vert X_{n}-a_{1}\right\Vert_{2}}\big[\sum_{i=2}^{N+1}\int_{\mathbb{R}}\big(\left|\xi-a_{1}\right|^{2}-\left|\xi-a_{i}\right|^{2}\big)_{+}\mu_{n}(d\xi)\big]\nonumber
\end{align}
Since $a_{1}<0$, $\left\Vert X_{n}-a_{1}\right\Vert_{2}=\big(\mathbb{E}X_{n}^{2}-2a_{1}\mathbb{E}X_{n}+\left|a_{1}\right|^{2}\big)^{1/2}\geq\sqrt{1+\left|a_{1}\right|^{2}}$. Therefore,
\begin{align*}
&\frac{\int_{\mathbb{R}}\big(\left|\xi-a_{1}\right|^{2}-\left|\xi-a_{i}\right|^{2}\big)_{+}\mu_{n}(d\xi)}{\left\Vert X_{n}-a_{1}\right\Vert_{2}}=\frac{\int_{\mathbb{R}}2(a_{i}-a_{1})\big(\xi-\frac{a_{i}+a_{1}}{2}\big)_{+}\mu_{n}(d\xi)}{\left\Vert X_{n}-a_{1}\right\Vert_{2}}\nonumber\\&\hspace{0.5cm}\leq\frac{4a_{i}\mathbb{E}X_{n}}{\sqrt{1+\left|a_{1}\right|^{2}}}\leq\frac{4a_{i}\mathbb{E}X_{n}}{\frac{1}{2}a_{i}}=8\,\mathbb{E}X_{n}.\nonumber
\end{align*}
for $i\!\in\{2, \dots, N+1\}$. Consequently, 
\begin{align*}
\sup_{(a_{1},\dots, a_{N+1})\in(iii,\beta)}\hskip-0.5cm \big|e_{N+1,2}\big(\mu_{n}, (a_{1},\dots, a_{N+1})\big)-e_{2,2}\big(\mu_{n}, (a_{1}, a_{1})\big)\big|\leq 8N\, \mathbb{E}X_{n}=8\,Ne^{-n^{2}/8}\longrightarrow0.
\end{align*}
This completes the proof. \end{proof}

\begin{proof}[Proof of Theorem~\ref{notcomplet}]
Let $\mu_{n}$ be the probability distribution of $X_{n}$ defined in (\ref{defcontre}). If for some $N\geq2$, $\big(\mathcal{P}_{2}(\mathbb{R}), \mathcal{Q}_{N, 2}\big)$ were complete, then there exists a probability mesure $\widetilde{\mu}$ in $\mathcal{P}_{2}(\mathbb{R})$ such that $\mathcal{Q}_{N, 2}(\mu_{n}, \widetilde{\mu})\longrightarrow0$. Then, $\mathcal{W}_{2}(\mu_{n}, \widetilde{\mu})\longrightarrow0$ by applying Proposition~\ref{d1peven}, which creates a contradiction.
\end{proof}
\begin{rem}The extension of this result to a   Hilbert or simply multidimensional setting, although likely, is not straightforward.
\end{rem}

\noindent{\bf Acknowledgement.} The authors thank both anonymous referees for their careful reading of the paper and fruitful suggestions. 
\begin{small}
\section*{Appendix: some examples of $c(d, |\cdot|_{r})$}\label{ldp}
\begin{proof}[Proof of Proposition~\ref{ldpprop}] 
\noindent $(i)$ is obvious.\\
\noindent$(ii)$ $c(2, |\cdot|_{1})=2$ is obvious (see Figure~\ref{new}). Now we prove that $c(2, |\cdot|_{r})=3$ for every $r\in(1, +\infty)$.

We choose $a_{1}=(0,1)$, $a_{2}=\big((1-2^{-r})^{\frac{1}{r}}, -\frac{1}{2}\big)$ and $a_{3}=\big(-(1-2^{-r})^{\frac{1}{r}}, -\frac{1}{2}\big)$. We will first show that $S_{|\cdot|_{r}}(0,1)\subset\bigcup_{1\leq i\leq3}\bar{B}_{|\cdot|_{r}}(a_{i},1)$. 

Let $(x,y)$ be any point on $S_{|\cdot|_{r}}(0,1)$, then $\big|x\big|^{r}+\big|y\big|^{r}=1$.
\begin{itemize}
\item If $\frac{1}{2}\leq y\leq 1$, then $(1-y)^{r}\leq y^{r}$ so that $\big|(x,y)-a_{1}\big|_{r}^{r}=\big|x\big|^{r}+(1-y)^{r}=1-y^{r}+(1-y)^{r}\leq1,$
that is, $(x,y)\!\in\bar{B}_{|\cdot|_{r}}(a_{1},1)$.
\item If $-1\leq y\leq \frac{1}{2}$ and $x\geq0$, then 
\begin{align}
&\big|(x,y)-a_{2}\big|_{r}^{r}=\big|x-(1-2^{-r})^{\frac{1}{r}}\big|^{r}+\big|y+\frac{1}{2}\big|^{r}=\big|(1-\big|y\big|^{r})^{\frac{1}{r}}-(1-2^{-r})^{\frac{1}{r}}\big|^{r}+\Big|y+\frac{1}{2}\Big|^{r}
\nonumber\\
&\hspace{0.4cm}
\leq \big|  |y|^{r}-2^{-r}\big| + \big|y+\frac{1}{2}\big|^{r},\nonumber
\end{align}
the last inequality is due to the fact that the function $u\mapsto u^{-\frac{1}{r}}$ is $\frac{1}{r}$-H\"older. 
As $r\geq1$, the function $y\mapsto\big||y|^{r}-2^{-r}\big| + \big|y+\frac{1}{2}\big|^{r}$ is convex over $[-1, \frac{1}{2}]$. Consequently, it attains its maximum either at $-1$ or at $\frac{1}{2}$. 
Hence, $\big|(x,y)-a_{2}\big|_{r}^{r}$ is upper bounded by $1$ since
\begin{align}
\text{if}&\;\; y=-1,\;\big| \big|y\big|^{r}-2^{-r}\big| + \big|y+\tfrac{1}{2}\big|^{r}=1-2^{-r}+2^{-r}=1,\nonumber\\
\text{if}&\;\; y=\tfrac{1}{2}, \;\;\;\Big| \big|y\big|^{r}-2^{-r}\Big| + \big|y+\frac{1}{2}\big|^{r}=\big|2^{-r}-2^{-r}\big|+1^{r}=1.\nonumber
\end{align}
This implies that $(x,y)\!\in\bar{B}_{|\cdot|_{r}}(a_{2},1)$. 
\item If $-1\leq y\leq \frac{1}{2}$ and $x\leq0$, then $(x,y)\!\in\bar{B}_{|\cdot|_{r}}(a_{3},1)$ by the symmetry of the unit sphere. 
\end{itemize}
Next, we will show $c(2, |\cdot|_{r})>2$ for every $1<r<+\infty$. 
Let $a_{1}$ and $a_{2}$ denote the two centers of balls on the sphere $S_{|\cdot|}(0, 1)$. Since the $\ell^{r}$-ball is centrally symmetric with respect to $(0, 0)$, we fix $a_{1}=(x, y)$ such that $x\in[(\frac{1}{2})^{\frac{1}{r}}, 1], \,y\in[0, (\frac{1}{2})^{\frac{1}{r}}]$ and $x^{r}+y^{r}=1$. 
\begin{itemize}
\item \textit{Case 1}. We choose $a_{2}$ such that $a_{2}$ is centrally symmetric to $a_{1}$ with respect to the center $(0, 0)$, i.e. $a_{2}=(-x, -y)$.  

We prove $z_{1}=(y, -x)\notin\cup_{i=1,2}\bar{B}_{|\cdot|_{r}}(a_{i},1)$ and $z_{2}=(-y, x)\notin\cup_{i=1,2}\bar{B}_{|\cdot|_{r}}(a_{i},1)$. In fact, if $y=0$, then $\big|a_{1}-z_{1}\big|_{r}=\big|a_{2}-z_{1}\big|_{r}=2>1$. If $y>0$, then 

\begin{align}
&\big|a_{1}-z_{1}\big|_{r}^{r}=\big|a_{2}-z_{1}\big|_{r}^{r}=\big|a_{1}-z_{2}\big|_{r}^{r}=\big|a_{2}-z_{2}\big|_{r}^{r}=(x+y)^{r}+(x-y)^{r}\nonumber\\
&\nonumber=\sum_{\substack{k=0 \\ k \text{ even}}}^{r}2\binom{r}{k}x^{r-k}y^{k}>2x^{r}\geq1\nonumber
\end{align}

\item \textit{Case 2}. The point $a_{2}$ is not centrally symmetric to $a_{1}$.

Let $H_{a_{1}}\coloneqq\{\eta=(\eta_{1}, \eta_{2})\!\in\mathbb{R}^{2} \text{ s.t. } x\cdot\eta_{2}=y\cdot\eta_{1}\}$, which is the straight line (with respect to the Euclidean distance) across the origin and $a_{1}$. Then between $z_{1}$ and $z_{2}$, there exists at least one point which is not in the same side of $H_{a_{1}}$ as $a_{2}$, and this point can not be covered by $\cup_{i=1,2}\bar{B}_{|\cdot|_{r}}(a_{i},1)$.
\end{itemize}

Figure~\ref{c233} illustrates that $c(2, |\cdot|_{r})=3$ when $r=3$.

\begin{figure}[htbp]
\centering
\begin{minipage}[t]{0.49\textwidth}
\centering
\includegraphics[height=4.5cm ,width=4.5cm]{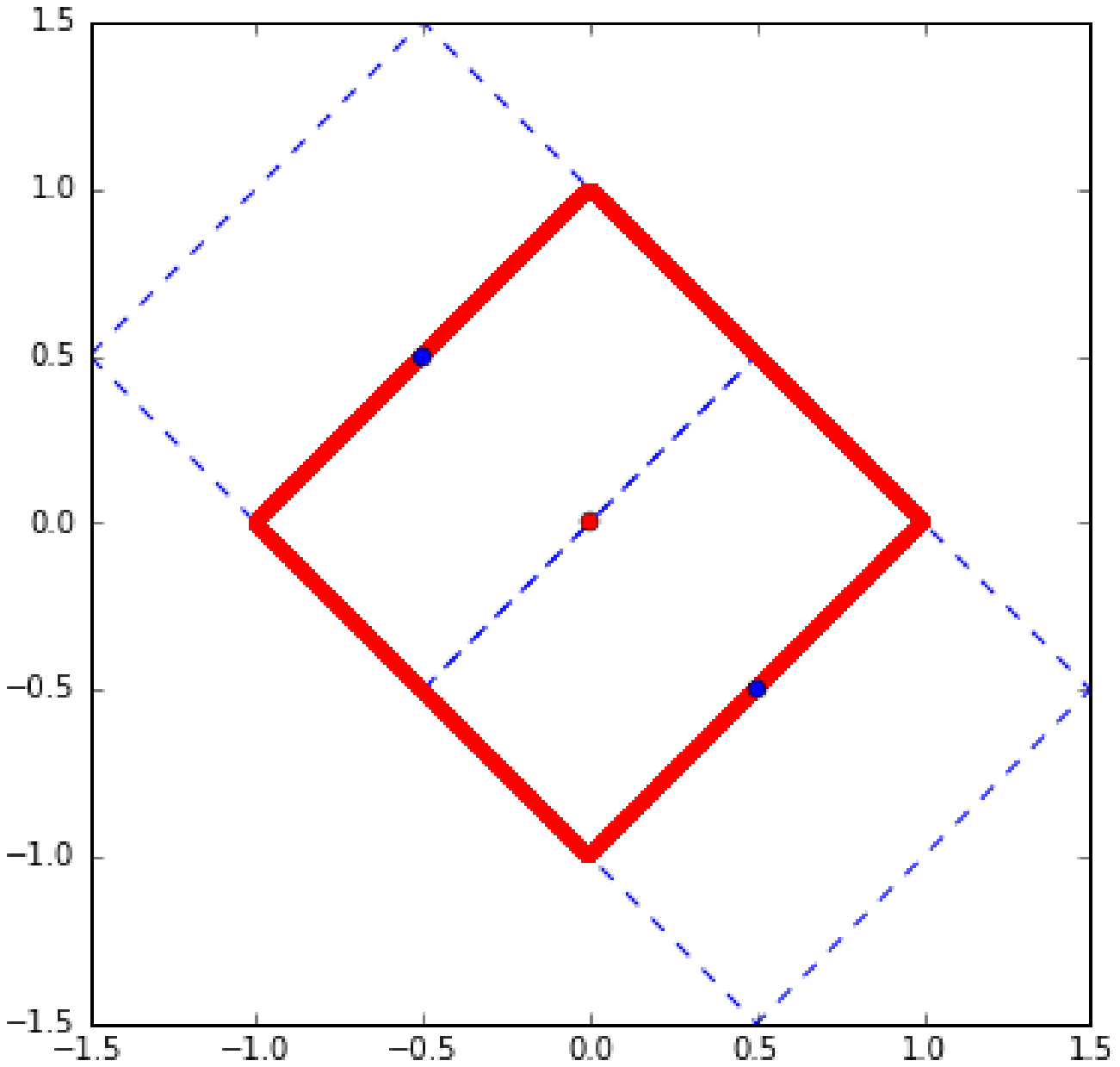}
\caption{\footnotesize $a_{1}=(-\frac{1}{2}, \frac{1}{2})$, $a_{2}=(\frac{1}{2}, -\frac{1}{2})$, \\ then $S_{|\cdot|_{1}}(0,1)\subset\bigcup_{i=1,2}\bar{B}_{|\cdot|_{1}}(a_{i},1)$}\label{new}
\end{minipage}
\begin{minipage}[t]{0.49\textwidth}
\centering
\includegraphics[height=4.5cm ,width=4.5cm]{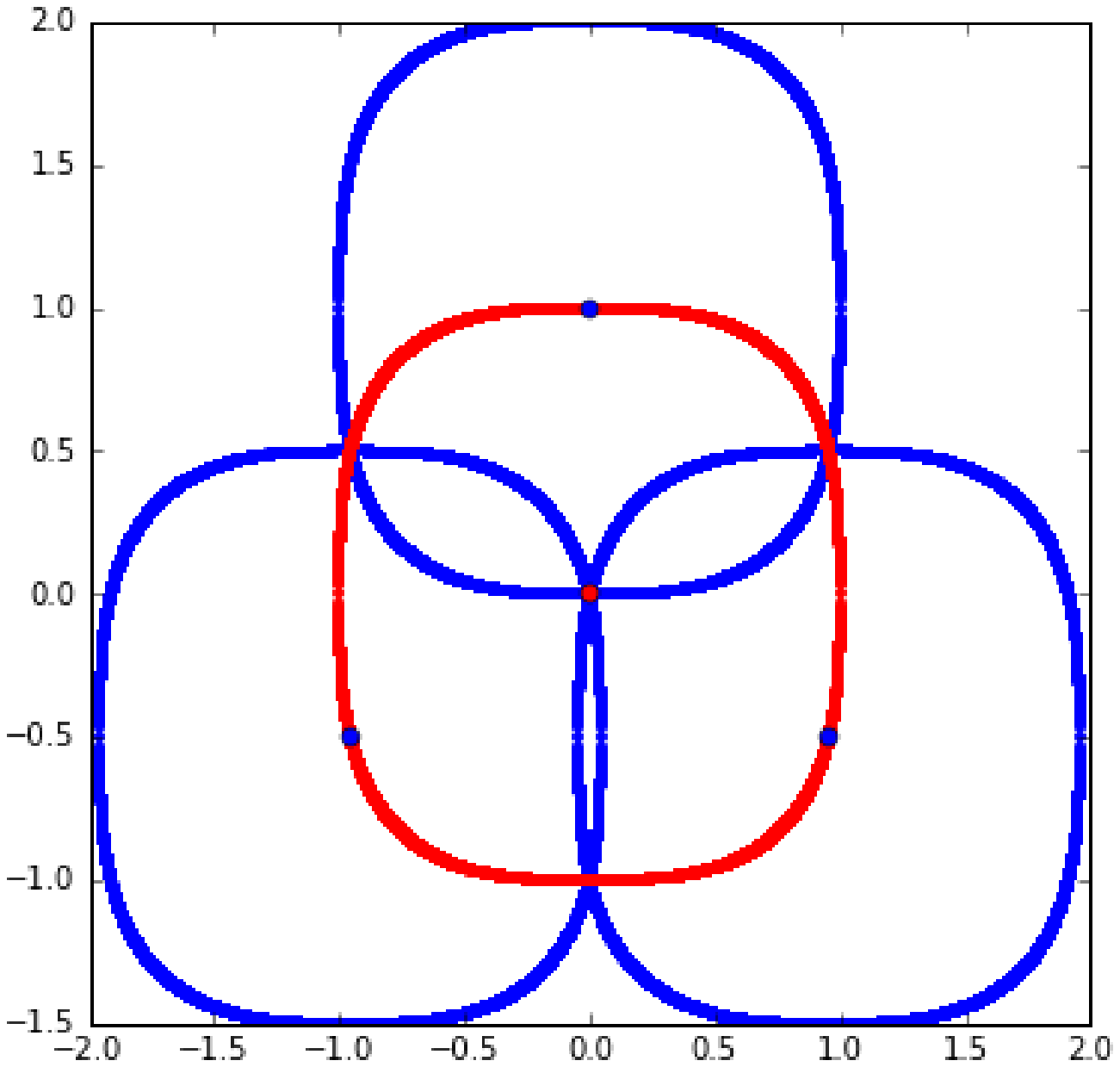}
\caption{\footnotesize $c(2, |\cdot|_{3})=3$}
\label{c233}
\end{minipage}
\end{figure}

\noindent $(iii)$ Let $a_{1}=(-1,0,\dots,0)$ and $a_{2}=(1,0,\dots,0)$. We will show that $S_{|\cdot|_{\infty}}(0,1)\subset\bigcup_{i=1,2}\bar{B}_{|\cdot|_{\infty}}(a_{i},1)$.

Let $x=(x^{1}, \dots, x^{d})\!\in S_{|\cdot|_{\infty}}(0,1)$. There exists $i_{0}$ such that $\max_{1\le i\le d}|x^{i}|\leq|x^{i_{0}}|=1$.
\begin{itemize}
\item If $i_{0}=1$, and $x^{1}=-1$, then 
$\big|x-a_{1}\big|_{\infty}=\big|x^{1}+1\big|\lor\max_{i=\{2,\dots,d\}}\big|x^{i}\big|\leq1,$
that is, $x\!\in\bar{B}_{|\cdot|_{\infty}}(a_{1},1)$.
\item If $i_{0}=1$, and $x^{1}=1$, then 
$\big|x-a_{2}\big|_{\infty}=\big|x^{1}-1\big|\lor\max_{i=\{2,\dots,d\}}\big|x^{i}\big|\leq1,$
that is, $x\!\in\bar{B}_{|\cdot|_{\infty}}(a_{2},1)$.
\item If $i_{0}\geq2$, and $x^{1}\leq0$, then 
$\big|x-a_{1}\big|_{\infty}=\big|x^{1}+1\big|\lor1\leq1,$
that is, $x\!\in\bar{B}_{|\cdot|_{\infty}}(a_{1},1)$.
\item If $i_{0}\geq2$, and $x^{1}\geq0$, then 
$\big|x-a_{2}\big|_{\infty}=\big|x^{1}-1\big|\lor1\leq1,$
that is, $x\!\in\bar{B}_{|\cdot|_{\infty}}(a_{2},1)$.
\end{itemize}
Consequently, we conclude that $S_{|\cdot|_{\infty}}(0,1)\subset\bigcup_{i=1,2}\bar{B}_{|\cdot|_{\infty}}(a_{i},1)$ and  $c(d, |\cdot|_{\infty})>1$ is obvious.

\vspace{0.2cm}
\noindent $(iv)$ 
Let $a_{i}=(0, \dots, 1, \dots, 0)$ - the $i^{th}$ coordinate of $a_{i}$ is equal to $1$ and the others equal to $0$. We will show that $S_{|\cdot|_{r}}(0,1)\subset\bigcup_{i=1}^{d}\Big(\bar{B}_{|\cdot|_{r}}(a_{i},1)\cup\bar{B}_{|\cdot|_{r}}(-a_{i},1)\Big).$

For any $x=(x^{1}, \dots, x^{d})\!\in S_{|\cdot|_{r}}(0,1)$, then there exists $i_{0}\!\in\{1, \dots, d\}$ such that $\big|x^{i_{0}}\big|\geq\frac{1}{2}$. Otherwise $\displaystyle1=\sum_{1\le i \le d}\big|x^{i}\big|^{r}< d\times2^{-r}\leq1$, which yields a contradiction.

$\bullet$  If $x^{i_{0}}\geq\frac{1}{2}$, then $\big|x-a_{i_{0}}\big|^{r}=(1-x^{i_{0}})^{r}+\sum_{i\neq i_{0}}\big|x^{i}\big|^{r}=(1-x^{i_{0}})^{r}+1-(x^{i_{0}})^{r}$.
As $x^{i_{0}}\leq\frac{1}{2}$, we have $(1-x^{i_{0}})^{r}-(x^{i_{0}})^{r}\leq0$, so that $\big|x-a_{i_{0}}\big|^{r}\leq1$, which implies that $x\!\in\bar{B}_{|\cdot|_{r}}(a_{i_{0}},1)$.

$\bullet$ If $x^{i_{0}}\leq-\frac{1}{2}$, one can similarly prove that $x\!\in\bar{B}_{|\cdot|_{r}}(-a_{i_{0}},1)$.

Consequently, we can conclude that $S_{|\cdot|_{r}}(0,1)\subset\bigcup_{i=1} ^{d}\Big(\bar{B}_{|\cdot|_{r}}(a_{i},1)\cup\bar{B}_{|\cdot|_{r}}(-a_{i},1)\Big)$.
\end{proof}
\end{small}

\bibliographystyle{abbrv}
\bibliography{quantizationcharacterization}

\end{document}